\def\g{\lambda}
\def\psiM{\psi_{\max}}
\def\rhoM{\rho_{\max}}
\def\rhom{\rho_{\min}}
\def\betam{\beta_{\min}}
\def\betaM{\beta_{\max}}
\newcommand{\beq}{\begin{eqnarray*}}
\newcommand{\feq}{\end{eqnarray*}}
\newcommand{\beqn}{\begin{eqnarray}}
\newcommand{\feqn}{\end{eqnarray}}
\newcommand{\RN}[1]{%
  \textup{\uppercase\expandafter{\romannumeral#1}}%
}
\newtheorem{theorem}{Theorem}[section]
\newtheorem*{theorem*}{Theorem}
\newtheorem{lemma}[theorem]{Lemma}
\newtheorem{corollary}[theorem]{Corollary}
\newtheorem{proposition}[theorem]{Proposition}
\newtheorem{assump}[theorem]{Assumption}
\theoremstyle{definition}
\newtheorem{definition}[theorem]{Definition}
\theoremstyle{remark}
\newtheorem{remark}[theorem]{Remark}
\numberwithin{equation}{section}
\newcommand{\atan}{\tan^{-1}}
\newcommand\numberthis{\addtocounter{equation}{1}\tag{\theequation}}
\begin{document}
\title[Critical thresholds in Euler-Poisson system]
{A complete characterization of sharp thresholds to spherically symmetric multidimensional pressureless Euler-Poisson systems}

\author{Manas Bhatnagar and Hailiang Liu}
\address{Department of Mathematics and Statistics, University of Massachusetts Amherst, Amherst, Massachusetts 01003}
\email{mbhatnagar@umass.edu}
\address{Department of Mathematics, Iowa State University, Ames, Iowa 50010}
\email{hliu@iastate.edu} 
\keywords{Critical thresholds, global regularity, shock formation, Euler-Poisson system}
\subjclass[2020]{35A01; 35B30; 35B44; 35L45} 
\begin{abstract}  The Euler-Poisson (EP) system  models the dynamics of a variety of  physical processes, including charge transport, collisional plasmas, and certain cosmological wave phenomena. In this work, we establish sharp critical threshold conditions that distinguish global-in-time regularity from finite-time breakdown for solutions of the radially symmetric, multidimensional pressureless EP system. Overall, there are two cases: with and without background ($c>0, c=0$ respectively). For $c>0$, we obtain precise thresholds assuming a periodicity condition. A key feature of our approach is that it extends seamlessly to the zero background case, where we obtain sharp thresholds without imposing any additional assumptions. In particular, the framework accommodates initial velocities that may be negative, allowing the flow to be directed toward the origin. 
 The main analytical challenge of deriving threshold conditions for EP systems stems from the intricate coupling of various  local/nonlocal forces. To overcome this, we identify a novel nonlinear quantity that plays a decisive role in the analysis and enables a unified treatment of all relevant scenarios. Our results provide a comprehensive characterization of critical thresholds for the pressureless EP system in multiple dimensions. 
\end{abstract}
\maketitle

\section{Introduction}
\label{intro}

A general pressureless Euler-Poisson (EP) system has the following form,
\begin{subequations}
\label{genEP}
\begin{align}
& \rho_t + \nabla\cdot(\rho \mathbf{u}) = 0, \quad t>0, \mathbf{x}\in\mathbb{R}^N,
    \label{gensysmass}\\
& \mathbf{u}_t + (\mathbf{u} \cdot \nabla)  \mathbf{u} = -k\nabla\phi , \label{gensysmom}\\
& -\Delta\phi = \rho - c, \label{gensyspoisson}
\end{align}
\end{subequations}
where the initial data $(\rho_0\geq 0, \mathbf{u_0})$ are assumed to be smooth. The constants $k,c\geq 0$ represent the forcing coefficient and background state, respectively. The sign of $k$ signifies the type of particles being modeled and its magnitude gives a measure of the strength between them. When the force in-between particles is repulsive, such as those arising in charge transport, then $k>0$. 
Within the pressureless setup \eqref{genEP}, $k<0$ is relevant in the case of interstellar clouds where the pressure gradient becomes negligible compared to the gravitation forces, see \cite{Ha09}. In the pressureless setup with same charge particles ($k>0$), the background state is, in practicality, a profile, that is, a function of the spatial variable, $c=c(x)$, see \cite{Ja75}. The background models the doping profile for charge flow in semiconductors. 
A recent result in \cite{CKKT25} highlights the necessity of a neutrality condition for one-dimensional pressureless EP systems to ensure global existence. This condition emerges naturally from a rigorous local well-posedness analysis -- filling a gap left open in earlier works.
However, for an attractive EP, the neutrality condition enforces the global existence only for a very restrictive (essentially measure-zero) class of initial data. 

A locally well-posed PDE system is said to exhibit a critical threshold phenomena when the global-in-time behavior of its solutions is sensitively dependent on whether the initial data crosses a certain threshold manifold. This manifold divides the phase space of admissible initial data into two disjoint regions. Initial data lying entirely within the \textbf{ subcritical region} yield solutions that remain smooth for all time. In contrast, if any portion of the initial data lies in the \textbf{ supercritical region}, the corresponding solution experiences finite-time breakdown and loses regularity.

As one might expect, establishing global existence and identifying the corresponding critical threshold manifold is considerably simpler in one dimensional case ($N=1$). In this setting, the threshold reduces to a curve in the $(u_{0x},\rho_0)$ plane, and the subcritical region 
can be characterized explicitly as, 
$$
|u_{0x}| < \sqrt{k(2\rho_0-c)}.
$$
A substantial body of literature has been devoted to critical thresholds for system \eqref{genEP} and related models. The existence of such a threshold curve for the Euler–Poisson system was first identified and analyzed in \cite{ELT01} for EP systems, where both the one-dimensional case and the multidimensional case with spherical symmetry were treated. This pioneering work was followed by a series of studies on EP systems and other models, including \cite{BL19,BL201,CCTT16,CCZ16,CT09,HT17,Lee2,LL08,LL09,LT01,LT02, LT03, LT04,TT14, TT22, TW08,Tan20,Tan21,WTB12}, among many others. It is well known that for general hyperbolic conservation laws, singularities form in finite time for arbitrary initial data \cite{Lax64}. This breakdown is driven by the convective nonlinearities inherent in such systems. 
However, the addition of suitable source terms can fundamentally alter this behavior and produce a subset of initial data that leads to global solutions. Identifying this subset, the subcritical region, is an important and intriguing direction of study. In many cases, the “good’’ external forces can balance or even dominate the “bad’’ convective effects, thus expanding the subcritical region.

A striking example is provided by the strongly singular Euler–Poisson–alignment (EPA) system. In \cite{KT18}, it was shown that system \eqref{genEP} in one dimension, augmented with a nonlocal alignment term in the momentum equation, admits global-in-time solutions for every initial data. This result is remarkable and stands in sharp contrast to the classical behavior of the standard Euler–Poisson equations.

For critical-threshold results in one dimensional EP and EPA systems, we refer to \cite{BL19,BL201,CCTT16,CCZ16,DKRT18,KT18,TW08,Tan21}. In particular, \cite{TW08} considers the Euler–Poisson system with pressure but without background. Deriving thresholds for the full EP system with pressure is notably challenging, due to strict hyperbolicity and the presence of two interacting characteristic families. As a result, the critical-threshold problem for EP systems with pressure and nonzero background is largely an open problem. 

For pressureless EP systems, the key is to obtain effective bounds on the velocity gradient. 
Given the local well-posedness theory, any global-in-time solution to \eqref{genEP} must arise as an extension of a smooth local solution, provided suitable a priori estimates can be established. We recall the relevant local existence result below.
\begin{theorem}[Local wellposedness]
\label{local}
Consider \eqref{genEP} with smooth initial data, $\rho_0-c\in H^s(\mathbb{R}^N)$ and $\mathbf{u_0}\in \left(H^{s+1}(\mathbb{R}^N)\right)^N$ with $s>N/2$. Then there exists a time $T>0$ and functions $\rho,\mathbf{u}$ such that, 
$$
\rho-c\in C([0,T];H^s(\mathbb{R}^N)),\quad \mathbf{u}\in \left(C([0,T];H^{s+1}(\mathbb{R}^N))\right)^N, 
$$ 
are unique smooth solutions to \eqref{genEP}. In addition, the time $T$ can be extended as long as,
$$
\int_0^T ||\nabla \mathbf{u}(t,\cdot)||_\infty dt < \infty. , 
$$
\end{theorem}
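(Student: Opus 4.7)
The plan is to follow the standard approach for quasilinear symmetric hyperbolic systems, modified to handle the nonlocal Poisson forcing. First I would rewrite the system in terms of the perturbation $\sigma := \rho - c$, so that
\begin{align*}
\sigma_t + \mathbf{u}\cdot\nabla\sigma + (\sigma+c)\nabla\cdot\mathbf{u} &= 0, \\
\mathbf{u}_t + (\mathbf{u}\cdot\nabla)\mathbf{u} &= -k\nabla\phi, \quad -\Delta\phi = \sigma.
\end{align*}
The key observation is that the map $\sigma \mapsto \nabla\phi$ (with $\phi$ decaying at infinity) gains one derivative by standard elliptic regularity, so $\|\nabla\phi\|_{H^{s+1}} \lesssim \|\sigma\|_{H^s}$, and the nonlocal coupling is benign for the regularity analysis.

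Next I would construct approximate solutions $(\sigma^n, \mathbf{u}^n)$ by a Picard iteration: given the $n$-th iterate, solve the linear transport equations for $\sigma^{n+1}$ and $\mathbf{u}^{n+1}$ whose transporting velocity and source terms are built from the previous iterate. Uniform bounds in $H^s \times (H^{s+1})^N$ on a short interval $[0,T]$ follow from standard $L^2$-energy estimates for linear transport, combined with Moser/Kato--Ponce commutator inequalities of the form
\[
\|[D^\alpha, \mathbf{u}\cdot\nabla]v\|_{L^2} \lesssim \|\nabla\mathbf{u}\|_\infty \|v\|_{H^{|\alpha|}} + \|\mathbf{u}\|_{H^{|\alpha|}} \|\nabla v\|_\infty,
\]
together with the Sobolev embedding $H^{s+1} \hookrightarrow W^{1,\infty}$ for $s > N/2$. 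Convergence follows by showing the sequence is Cauchy in a weaker norm (e.g. $L^2 \times (H^1)^N$), and interpolating against the uniform high-regularity bound gives convergence strong enough to pass to the limit in the equations. Uniqueness is obtained by an analogous $L^2$-type energy estimate applied to the difference of two solutions.

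The hardest part is the continuation criterion. Differentiating the system by $D^\alpha$ with $|\alpha|\leq s$ and applying the commutator estimates above yields an a priori bound of the form
\[
\frac{d}{dt}\Big(\|\sigma\|_{H^s}^2 + \|\mathbf{u}\|_{H^{s+1}}^2\Big) \lesssim \Big(1+\|\nabla\mathbf{u}\|_\infty + \|\nabla\sigma\|_\infty\Big)\Big(\|\sigma\|_{H^s}^2 + \|\mathbf{u}\|_{H^{s+1}}^2\Big).
\]
To reduce the right-hand side to $\|\nabla\mathbf{u}\|_\infty$ alone, I would exploit the pure transport structure of the $\sigma$-equation: differentiating it shows that $\|\nabla\sigma\|_\infty$ obeys a Gronwall-type estimate driven by $\|\nabla\mathbf{u}\|_\infty$ and $\|\nabla^2\mathbf{u}\|_\infty$, and the latter can be controlled by $\|\nabla\mathbf{u}\|_\infty\bigl(1+\log(1+\|\mathbf{u}\|_{H^{s+1}})\bigr)$ via the logarithmic Sobolev embedding, in the spirit of the Beale--Kato--Majda argument. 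A final Gronwall step then shows the $H^s \times (H^{s+1})^N$ norm cannot blow up as long as $\int_0^T \|\nabla\mathbf{u}(t,\cdot)\|_\infty\, dt < \infty$, which is precisely the stated continuation criterion and the main technical obstacle of the proof.
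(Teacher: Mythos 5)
The paper does not actually prove this theorem; it is recorded as a known result with a citation to Majda's monograph \cite{M84}. Your skeleton — perturbation $\sigma := \rho - c$, Picard iteration on linear transport, $H^s$-energy estimates via Kato--Ponce and Moser inequalities, then Gronwall — is the correct standard route, and you correctly identify that the Poisson forcing is a benign lower-order term precisely because $\mathbf{u}$ carries one more Sobolev derivative than $\sigma$.

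The continuation criterion is where the argument breaks. You insert $\|\nabla\sigma\|_\infty$ into the a priori estimate and then propose to close via the interpolation
\[
\|\nabla^2\mathbf{u}\|_\infty \lesssim \|\nabla\mathbf{u}\|_\infty\bigl(1 + \log(1 + \|\mathbf{u}\|_{H^{s+1}})\bigr),
\]
but this inequality is false. Taking $\mathbf{u} = \epsilon\,\chi(x)\sin(x_1/\epsilon^2)\,\mathbf{e}_1$ for a fixed bump $\chi$ gives $\|\nabla\mathbf{u}\|_\infty\sim\epsilon^{-1}$ and $\|\nabla^2\mathbf{u}\|_\infty\sim\epsilon^{-3}$, while $\log(1+\|\mathbf{u}\|_{H^{s+1}})\sim\log(1/\epsilon)$, so the claimed bound fails as $\epsilon\to 0$. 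The genuine Beale--Kato--Majda estimate controls an $L^\infty$ norm by a $BMO$ norm times a logarithm of the high Sobolev norm, not by a lower-order $L^\infty$ norm, and would not serve your purpose here. A second problem: for $s>N/2$ (as opposed to $s>N/2+1$), $\|\nabla\sigma\|_\infty$ is not dominated by $\|\sigma\|_{H^s}$, so that quantity should never enter a usable a priori estimate. The remedy is structural and elementary. Use the extra derivative on $\mathbf{u}$ (a Gagliardo--Nirenberg split in the commutator $[D^\alpha,\mathbf{u}\cdot\nabla]\sigma$ and in the product $\sigma\,\nabla\!\cdot\!\mathbf{u}$) so the energy inequality's coefficient involves only $\|\nabla\mathbf{u}\|_\infty$ and $\|\sigma\|_\infty$. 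Then control $\|\sigma\|_\infty$ not by Sobolev embedding (which would give a cubic term) but directly from the Lagrangian formula $\sigma(t,X(t)) + c = (\sigma_0+c)\exp\bigl(-\int_0^t\nabla\!\cdot\!\mathbf{u}\bigr)$, which stays finite whenever $\int_0^T\|\nabla\mathbf{u}\|_\infty\,dt<\infty$. Gronwall then yields the stated continuation criterion with no logarithmic machinery at all.
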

\begin{remark}
\label{rem:integrability} We note that, in light of the recent results in \cite{CKKT25} (as alluded to in the first paragraph of this introduction), the solution space must be augmented with an additional  integrability-type condition on $\rho-c$. In our radially symmetric setting, we impose condition \eqref{eq:neutral}, which is sufficient to ensure that the associated ODE dynamics (along characteristics) remain consistent. As demonstrated in \cite{CKKT25}, such a `neutrality' condition is necessary for well-posedness in the one dimensional case. 
However, a precise analogue of this requirement in higher dimensions has not yet been established in the literature. 
\end{remark}
From local existence theory, in one dimension, it suffices to control  $|u_x|$ to ensure global well-posedness. However, in higher dimensions, the velocity gradient becomes an $N\times N$ matrix, which introduces additional challenges.  Along the characteristic paths, one can derive an ODE for the divergence of the velocity gradient, but controlling the divergence alone is generally insufficient. To guarantee smooth solutions for all time, it is also necessary to control the spectral gap -- that is, the sum of the absolute differences of the eigenvalues of the gradient matrix (see e.g.,\cite{HT17}). This requirement constitutes the main technical difficulty.

Due to the inherent nature of hyperbolic balance laws, it is generally easier to obtain sufficient conditions for breakdown than for global existence. For example, in \cite{CT08,CT09}, the authors derive bounds on the supercritical region for \eqref{genEP} with $k<0$. Obtaining  bounds on subcritical region is considerably more involved. Various approaches have been proposed by simplifying the EP system.
In particular, \cite{Lee2,LT03} study restricted EP systems in two and three dimensions, respectively, and establish critical thresholds under these simplifications.

Another important variant of the EP system is \eqref{genEP} under spherical symmetry. If the initial data satisfy 
$$
\rho(0,\mathbf{x}) = f(|\mathbf{x}|), \quad \mathbf{u}(0,\mathbf{x}) =g(|\mathbf{x}|)\frac{\mathbf{x}}{|\mathbf{x}|} 
$$
for smooth functions $f$ and $g$, then for as long as the solution remains smooth, this symmetry is preserved in time, i.e., 
$$
\rho(t, \mathbf{x})=\rho(t,|\mathbf{x}|),\quad u(t, \mathbf{x})= u(t,|\mathbf{x}|)\frac{\mathbf{x}}{|\mathbf{x}|}.
$$
Under this simplification, \eqref{genEP} reduces to 
\begin{subequations}
\label{mainsys}
\begin{align}
& \rho_t + \frac{(\rho ur^{N-1})_r}{r^{N-1}} = 0, \qquad t>0,\quad r>0,
    \label{mainsysmass}\\
& u_t + u u_r = -k\phi_r , \label{mainsysmom}\\
& -(r^{N-1}\phi_{r})_r = r^{N-1}(\rho - c), \label{mainsyspoisson}
\end{align}
\end{subequations}
with $r=|\mathbf{x}|>0, N\geq 2$ and smooth initial data $(\rho_0\geq 0,u_0)$.
Although this spherically symmetric system is simpler than the full multidimensional EP system, analyzing critical thresholds remains challenging, particularly for $k>0$. 
The system \eqref{mainsys} was first studied in \cite{ELT01} for $c=0$ and expanding flows ($u_0> 0$), where sufficient conditions on global existence and finite-time-breakdown were obtained for $N=2,3$, and a sharp threshold condition was obtained for $N=4$. Later, a more general sharp condition was derived in \cite{WTB12}; however, this result  still applied only to zero background and expanding flows. In  \cite{Yu11}, a sufficient condition for finite-time blow-up was derived  for $c=0$, but no corresponding result regarding global existence was obtained.  

The dynamics of \eqref{mainsys} is quite different when $c>0$ compared with $c=0$. One major difference is that with a positive background, the density is treated as a perturbation around the constant background, leading to infinite total mass, and satisfying   
\begin{align}
\label{eq:neutral}
\int_0^\infty r^{N-1}(\rho(t,r) - c) dr = 0,
\end{align}
in contrast to the zero-background case where the total mass is finite and conserved.  

A major advance in relaxing the limitations of the previous results was made in \cite{Tan21}, where the author reduced \eqref{mainsys} to a $4\times 4$ ODE system along a characteristic path and  proved the existence of critical thresholds. This $4\times 4$ ODE system  is crucial in our analysis as well. 

One particularly interesting observation by us was that the Poisson forcing in \eqref{mainsysmom} is sufficient to prevent density concentration at the origin, even if the initial velocity points inward at some locations. To our knowledge, this was the first study of critical thresholds for the spherically symmetric EP system that dropped the expanding-flow assumption.

It was further noted that $N=2$ is critical dimension, requiring a different analytical approach than the case $N\geq 3$. The author also obtained partial results for the case $c=0$, providing bounds on both subcritical and supercritical regions.  However, no results were presented for $c>0$. More recent works \cite{CS23,R23} have highlighted intriguing properties of \eqref{mainsys} with $c>0$. In particular, \cite{CS23} shows that for $c>0,N\neq 4$,  the subcritical region for \eqref{mainsys} is of measure zero in the phase space, while the case  $N=4$ remains an open question. 

In this paper, we develop a unified methodology that addresses both the zero and nonzero background cases. For $c>0$, we use this framework to derive precise critical thresholds in four dimensions,  assuming a periodicity condition. This assumption is motivated by numerical observations as well as the results/discussions in \cite{CS23}. We then apply the same approach to the $c=0$ case, obtaining precise thresholds without any additional assumptions. To our knowledge, such a comprehensive and sharp result is novel and has not appeared in the current literature.
 
 Several careful steps are needed to obtain these precise thresholds. 
 In particular, determining the subcritical region for $c>0$ demands special attention. Our strategy begins by characterizing the supercritical region, progressively narrowing down the possible initial configurations that might lead to global solutions, and ultimately isolating the subcritical region.

The methodology we develop is quite general and shows promise for application to radially symmetric Euler-Poisson-alignment systems as well. Hence, as far as possible we perform calculations with a general dimension, $N\geq 2$. However, for $c>0$, we eventually state the final calculations and result for $N=4$. 

Our main results can be stated in non-technical terms as follows:
\begin{itemize}
\item For the nonzero background case ($c\neq 0$), we show that the EP system, \eqref{mainsys} with $N=4$ and a periodicity assumption admits a global-in-time smooth solution if and only if the initial data is smooth and lies within a subcritical region, denoted by $\Theta_4$. 
The precise critical threshold result is presented in Theorem \ref{ctcn}, with the explicit definition of $\Theta_4$ provided after the result.
\item For the zero background case ($c=0$), we show that the EP system \eqref{mainsys} admits a global-in-time smooth solution if and only if the initial data is smooth and lies within certain subcritical region, $\Sigma_N$. Theorem \ref{c0ctcn} contains the precise thresholds for dimensions greater than or equal to three. Theorem \ref{c0ctc2} contains the threshold results in dimension two. The explicit definition of $\Sigma_N$ is stated after each of the results.
\end{itemize} 
 We identify a completely novel nonlinear quantity that proves crucial   analyzing the system. At $t=0$, it is defined as    
\begin{align}
\label{thresconst}
A_0(r) := \frac{u_0(r)u_{0r}(r)+ k\phi_{0r}(r)}{r\rho_0(r)},\quad \rho_0(r)>0.
\end{align}
This quantity plays a central role in simplifying the characterization of the subcritical and supercritical regions. Its full motivation and applications will be discussed in detail in the following sections.
Remarkably, using $A_0(r)$, we can derive a more concise breakdown condition, and the corresponding result for dimensions $N\geq 3$ is stated in Theorem \ref{c0ftbn}. 

\subsection{Roadmap for the $c>0$ case} 
Before presenting our main results, we give a brief roadmap  outlining how the threshold regions are identified. The key points are summarized as follows: 
\begin{itemize}
\item The full dynamics of \eqref{mainsys} can be reduced to a weakly coupled system of four ODEs along a characteristic path $\{(t,X):dX/dt = u(t,X),\ X(0)=\beta, \beta>0\}$: 
\begin{align*}
& \rho' = -(N-1)\rho q - p\rho, \\
& p' = -p^2 - k(N-1)s + k(\rho -c),\\
& q' = ks-q^2, \\
& s' = -q(c+Ns)
\end{align*}
where
\begin{align*}
p:=u_r,\quad q:=\frac{u}{r},\quad s:=-\frac{\phi_r}{r}.
\end{align*} 
The initial data $\rho_0,p_0,q_0,s_0$ depends on $\beta$; we do not specify it explicitly since our analysis focuses one characteristic path at a time.  This ODE system was first derived in \cite{Tan20}, and a thorough analysis of this ODE system forms the core of our present study. In this paper, we restrict our attention to the case $k>0$, $N\geq 2$ and $c\geq 0$. 
\item The $q-s$ system is decoupled and admits a closed-form representation of trajectory curves, given by, 
$$
 R_N(q(t), \tilde s(t))= R_N(q_0, s_0+c/N),\quad \tilde s = s+c/N
$$
where 
\begin{align}\label{1.4}
R_N(q,\tilde s) = \left\{ 
\begin{array}{c}
\tilde s^{-1}\left( q^2 + \frac{kc}{2} + k\tilde s \ln\left(\tilde s\right) \right),\quad N=2,  \\
 \tilde s^{-\frac{2}{N}}\left( q^2 + \frac{kc}{N} + \frac{2k\tilde s}{N-2} \right),\quad N\geq 3. 
\end{array}
\right.
\end{align}
One can show that the $q-s$ system admits a global bounded solution if and only if $s_0> -c/N$. Moreover, the solutions are periodic and the trajectories rotate clockwise on the $(q, s)$ plane as time progresses. 
Denoting the extremal values of $s$ as $s_{\min}$ and $s_{\max}$, we have 
$$
s_{\rm min} <0< s_{\rm max}.
$$
In addition, $\int_0^t q(\tau)\, d\tau$ is  bounded,  which implies that 
$$
\Gamma(t):=
e^{-\int_0^t q(\tau)d\tau} = \left(\frac{s(t)+c/N}{ s_0+c/N}\right)^{\frac{1}{N}}\quad > 0.
$$
\item Another key transformation (first used in \cite{Tan21}) 
\begin{align*} 
\eta:= \frac{1}{\rho}\Gamma^{N-1},\qquad w = \frac{p}{\rho}\Gamma^{N-1}
\end{align*}
leads to a new system,
\begin{align*}
& \eta' = w,\\
& w' = -k\eta(c+s(N-1)) + k\Gamma^{(N-1)},
\end{align*}
This is a linear inhomogeneous system with time-dependent but bounded (and in particular, periodic) coefficients.  In this formulation, the existence of a global solution for $(\rho, p)$ is equivalent to ensuring that $\eta (t)>0$ for all $t>0$. 
\item We also define a nonlinear quantity
$$
A=q w-k\eta s,
$$
which is shown to be bounded. Its initial value, $A_0:=q_0w_0-k\eta_0s_0$ and $A_0$ in \eqref{thresconst},  is essentially the same 
quantity in (\ref{thresconst}), and will be discussed further in Section \ref{secblowup}. More precisely, for $N\geq 3$, we have 
$$
A(\Gamma)=\left( A_0 + \frac{k}{N-2} \right)\Gamma - \frac{k}{N-2}\Gamma^{N-1}. 
$$
From this, we can conclude that $\eta(t)$ will reach zero at some positive time if $A$ does not change sign. This occurs if
either 
$$
1+\frac{A_0(N-2)}{k} \leq 0, 
$$
or  
$$
1+\frac{A_0(N-2)}{k} > 0\quad \text{And}\quad \Gamma_{\rm max} \leq \kappa \quad \text{or} \; \Gamma_{\rm min}\geq \kappa,  
$$
where $\kappa$ is the unique positive root of $A$, 
\begin{align*}
\kappa:= \left( 1+ \frac{A_0(N-2)}{k}\right)^{\frac{1}{N-2}}.
\end{align*} 
\item 
Therefore, to precisely identify the initial configurations that yield  global existence, it is necessary to require 
$$
 1+\frac{A_0(N-2)}{k} > 0 \quad \text{and}\quad  \kappa \in (\Gamma_{\rm min}, \Gamma_{\rm max}).
$$
The key idea is then to construct two bounding functions that demarcate the region where $\eta>0$ for one period of the $q-s$ system. If $\eta$ starts in between these functions at $t=0$, it remains so for all times, except at discrete points where the three functions coincide. Conversely, any $\eta$ outside this region will eventually reach zero.
The two constructed functions effectively form ``beads" around the solution, within which the solution $\eta(t)$ remains, as illustrated in Figure \ref{figmainprop}. More precisely, we show that if $\kappa$ satisfies the above inclusion, there exists two nonnegative functions $\eta_i= \eta_i(t;q_0,s_0,A_0), i=1,2$ such that the following holds:\\
For $q_0\neq 0$, we show that if
$$
\min\{\eta_1(0),\eta_2(0)\}< \eta_0 < \max\{\eta_1(0),\eta_2(0)\},
$$ 
then, 
$$
\min\{\eta_1(t),\eta_2(t)\}\leq \eta(t) \leq \max\{\eta_1(t),\eta_2(t)\},\quad t>0, 
$$ 
The converse also holds, ensuring that $\eta(t)$ stays confined between 
$\eta_1(t)$ and $\eta_2(t)$ for all times.

For $q_0=0$, we have that if
$$
\frac{d\eta_1}{dt}(0)< w_0 < \frac{d\eta_2}{dt}(0), 
$$ 
then similarly,
$$
\min\{\eta_1(t),\eta_2(t)\}\leq \eta(t) \leq \max\{\eta_1(t),\eta_2(t)\},\quad t>0.
$$ 
The functions $\eta_1,\eta_2$ are the key components through which the threshold curves for the system are ultimately defined. 

From the results in \cite{CS23}, we know that for $N\neq 4$, the functions $\eta_i$ will eventually cross zero for some future time. However, numerical simulations suggest that for $N=4$, $\eta_i$'s are periodic with the same period as the $q-s$ system.
Assuming this periodicity, we are then able to precisely characterize  the threshold region for the system.
\end{itemize}

\section{Main Results}\label{mainresults}
This section is devoted to presenting precise formulations of our main results. We begin by clarifying the notion of finite-time breakdown for system \eqref{mainsys}. Breakdown manifests in the form
$$
\lim_{t\to t_c^-} |u_r(t,r_c)|=\infty,\quad \lim_{t\to t_c^-} \rho(t,r_c)=\infty\ (or\, 0),
$$
for some $r_c>0$ and finite $t_c>0$. The divergence of $u_r$ (shock formation) follows directly from Theorem \ref{local}.The simultaneous blow-up (or vacuum formation) of the density at the same spacetime point is a consequence of the weak hyperbolicity of the pressureless Euler–Poisson system and is a well-known feature for such systems. In the next section, our analysis will reestablish this behavior within our framework.

We now state our main results, followed by a detailed interpretation. We also provide 
explicit descriptions of the corresponding subcritical region sets. 
\begin{theorem}[Sharp threshold condition]
\label{ctcn}
Consider system  \eqref{mainsys} with $c>0$ and $N = 4$. Assume that assumption \ref{perassump} (periodicity assumption) holds. Then the following dichotomy applies:\\ 
1. {\bf Global-in-time existence.} \\
If for every $r>0$, the initial configuration
satisfies
$$
(r, u_0(r), \phi_{0r}(r), u_{0r}(r),\rho_0(r) )\in\Theta_4,
$$
then the solution exists smoothly for all time. \\
2. {\bf Finite-time breakdown}. \\
If there exists some $r_c>0$ such that,
$$
(r_c,u_0(r_c),\phi_{0r}(r_c),u_{0r}(r_c),\rho_0(r_c))\notin\Theta_4 ,
$$ 
then the corresponding solution undergoes finite-time breakdown.\\
Here, 
$$
\Theta_4\subseteq \{ (\alpha,x,y,z,\omega): y/\alpha <c/4,  \omega>0 \},
$$ 
is the subcritical region defined precisely in  Definition \ref{defthetan}. 
\end{theorem}

 We describe the subcritical region with the aid of the nonlinear quantity introduced in \eqref{thresconst}. Motivated by that expression, for any point $(\alpha,x,y,z,\omega)$, we define  
 \begin{align}
    \label{acoord}
    & a:=\frac{xz+ky}{\alpha\omega}.
 \end{align}
 This quantity will serve as a key parameter in characterizing the threshold geometry of the phase space.
 \begin{definition}
    \label{defthetan}
    A point $(\alpha,x,y,z,\omega)\in\Theta_4$ if and only if one of the following mutually exclusive conditions holds,
 \end{definition}
\begin{itemize}
    \item $(\alpha,x,y,z,\omega) \in \{(\alpha, 0, 0, z, \omega): z^2 < k(2\omega -c) \},$
    \item For $(x,y)\neq (0,0)$ and $a\in \frac{k}{2}\left(  \sqrt{\frac{y_{m}}{\frac{c}{4} - \frac{y}{\alpha}} } - 1 , \sqrt{ \frac{y_{M}}{\frac{c}{4} - \frac{y}{\alpha}} } - 1 \right), $
\begin{align}
\label{aint}
\begin{aligned}
& \frac{1}{\max\{ \eta_1(0),\eta_2(0) \} } < \omega < \frac{1}{\min\{ \eta_1(0),\eta_2(0) \} }, \quad \text{for } x\neq 0, \\
& z \in \frac{ky}{\alpha a} \left(  \frac{d\eta_1}{dt}(0),  \frac{d\eta_2}{dt}(0)  \right), \quad \text{for } x= 0.
\end{aligned}
\end{align}
\end{itemize}

Here, $0<y_{m} < \frac{c}{4} - \frac{y}{\alpha} <y_{M}$
are the roots of the equation,
$$
k\sqrt{y} + \frac{kc}{4\sqrt{y}} = R_4^*,
$$
with
$$
R_4^* = R_4\left( \frac{u_0(\alpha)}{\alpha},\frac{c}{4} - \frac{\phi_{0r}(\alpha)}{\alpha} \right).
$$
The explicit expression for $R_N(\cdot,\cdot)$ appears in (\ref{1.4}), also in \eqref{Rconstant}. The functions 
$$
\eta_i = \eta_i\left(t; \frac{u_0(\alpha)}{\alpha}, \frac{\phi_{0r}(\alpha)}{\alpha}, A_0(\alpha) \right),\quad i=1,2,
$$ 
are defined via a second order linear initial-value problem and are positive for all except at infinitely many discrete times.

We now give an interpretation of Theorem \ref{ctcn}. As with other critical-threshold results, the theorem allows us to determine pointwise whether the initial data lies in the subcritical region. 
More importantly, it provides a constructive way to build the subcritical region piece by piece.
The idea is to divide the full initial data space
in $\mathbb{R}^5$ of $(r, u_0,\phi_{0r},u_{0r},\rho_0)$ into a $3D$ component $(r, u_0,\phi_{0r})$, denoted $P_1$, and a two-dimensional plane $(u_{0r},\rho_0)$, denoted  $P_2$. 

For each point in $P_1$, we can construct the corresponding subricitical region on $P_2$ through $A_0$. 
To see this, take any point $(\alpha,x,y):=(r,u_0,\phi_{0r})$ in $P_1$. The values $y_{m},y_{M}$ can be calculated directly from $\alpha,x,y$; thus the interval specified in the second bullet point of Definition \ref{defthetan} can now be determined explicitly.  For any (fixed) $a$ in this interval,  equation \eqref{acoord} outputs a line in $P_2(z,\omega)$. In other words, fixing $a$ gives a linear relation between $(u_{0r},\rho_0)$. For this chosen value of $a$, we can then compute the constants, $\eta_1(0)$ and $\eta_2(0)$. Indeed, once  $\alpha,x,y,a$ are fixed, the functions $\eta_i$ become fully determined.  Finally, equation \eqref{aint} provides  the corresponding portion of this line in $P_2$ that contributes to the subcritical region, with the exact form depending on whether $x$ is zero or not. By repeating this for all $a$ in the interval above, we obtain the full portion of the subcritical region associated with the chosen point $(\alpha,x,y)$ in $P_1$. Carrying out this procedure for each point in $P_1$ yields the entire subcritical region.
\begin{figure}[htb]
	\centering
	\begin{tikzpicture}

        \draw [help lines, ->] (-3,0.5) -- (-1, 0.5);
        \draw [help lines, ->] (-3,0.5) -- (-3, 2.5);
        \draw [help lines, ->] (-3,0.5) -- (-4, -0.5);

        \node at (-2,1 ) {\small{$(r,u_0,\phi_{0r})$}};
        \fill (-2,1.4) circle (0.05cm);

        \draw[thick, ->] ( -1.5, 1.7 ) to [out=40, in=160] ( 4, 2 );
        \node at (1,2.1) {Fix $A_0 $  };

        \draw [help lines, ->] (4.15,0) -- (7, 0);
        \draw [help lines, ->] (4.15,0) -- (4.15, 2.5);

        \node at (3.8,1 ) {\small{$u_{0r}$}};
        \node at (5.5,-0.3 ) {\small{$\rho_{0}$}};

        \draw[thick] ( 4.8, 0.5 ) -- ( 6.2, 1.8 );
        \node[rotate=35] at (4.8,0.5 )   {$\boldsymbol{(}$};
        \node[rotate=35] at (6.2,1.8 )   {$\boldsymbol{)}$};
        \node at (7,1.2 )   {\tiny{ $u_{0r} = \frac{rA_0}{u_0} \rho_0 - \frac{k\phi_{0r} }{u_0} $},};
        \node at (8,0.7 )   {\tiny{ $ \frac{1}{\max\{ \eta_1(0), \eta_2(0) \} } < \rho_0 < \frac{1}{\min\{ \eta_1(0), \eta_2(0) \} } $.}};

	\end{tikzpicture}
	\caption{Illustration of Theorem \ref{ctcn}.}
	\label{fig:illustrate}
\end{figure}


The next two results provide a complete description of the zero background case in dimensions $N\geq 3$.  

\begin{theorem}[Sufficient condition for blow-up for $N\geq 3$]
\label{c0ftbn}
Suppose $c=0$ and $N\geq 3$ in \eqref{mainsys}. If there exists an $r_c>0$ such that,
$$
A_0(r_c) \leq -\frac{k}{N-2},
$$
then finite-time breakdown occurs. 
\end{theorem}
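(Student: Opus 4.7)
The plan is to argue along the characteristic $X(t)$ emanating from $r_c$, exploiting the novel quantity $A$ introduced in the roadmap. With $c=0$, the Euler-Poisson system reduces, along the characteristic, to the $4\times 4$ ODE system in $(\rho,p,q,s)$. Introducing $\Gamma(t)=\exp(-\int_0^t q\,d\tau)$, $\eta=\Gamma^{N-1}/\rho$, $w=p\Gamma^{N-1}/\rho$, and $A:=qw-ks\eta$, a direct calculation (using $s'=-Nqs$ and $w'=-k(N-1)s\eta+k\Gamma^{N-1}$ valid for $c=0$) gives $A'=-qA+kq\Gamma^{N-1}$. Combining with $\Gamma'=-q\Gamma$ and the observation that $q\Gamma^{N-2}=-(\Gamma^{N-2})'/(N-2)$, integration from $0$ to $t$ yields the explicit formula
\begin{equation*}
A(\Gamma)=\left(A_0+\frac{k}{N-2}\right)\Gamma-\frac{k}{N-2}\,\Gamma^{N-1},
\end{equation*}
with $A_0$ as in \eqref{thresconst}.

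Under the hypothesis $A_0(r_c)\le -k/(N-2)$, the coefficient $A_0+k/(N-2)$ is non-positive, so $A(\Gamma)\le-\frac{k}{N-2}\Gamma^{N-1}<0$ for every $\Gamma>0$. Hence $A$ is strictly negative along the entire characteristic for as long as the smooth solution exists.

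To extract finite-time breakdown from the strict negativity of $A$, I argue by contradiction: assume the solution extends smoothly to $[0,\infty)$, so $\eta(t)>0$ throughout. View the identity $q\eta'-ks\eta=A$ as a first-order linear ODE for $\eta$; on intervals where $q\neq 0$, the integrating factor (computed using the conservation law $R_N=s^{-2/N}(q^2+2ks/(N-2))$ for the $(q,s)$ subsystem) is $\mu=q_0\Gamma/q$, giving
\begin{equation*}
\mu(t)\eta(t)=\eta_0+\int_0^t\frac{q_0\,\Gamma(\tau)\,A(\tau)}{q(\tau)^2}\,d\tau.
\end{equation*}
Since $A<0$, the integrand is strictly negative and $\mu\eta$ is strictly decreasing. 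For $c=0$ with $s_0>0$, the $(q,s)$ trajectory lies on a closed curve in phase space and approaches the degenerate equilibrium $(0,0)$ at algebraic rates $\Gamma,q\sim 1/t$. Substituting these asymptotics into the integrand shows that it tends to a strictly negative constant proportional to $q_0(A_0+k/(N-2))$ as $t\to\infty$, so the integral diverges linearly to $-\infty$. Therefore $\mu\eta$ reaches zero at some finite $t_c>0$; since $\mu$ remains bounded and positive, this forces $\eta(t_c)=0$, and consequently $\rho=\Gamma^{N-1}/\eta\to\infty$ and $|u_r|=|w|/\eta\to\infty$ as $t\to t_c^-$, i.e.\ finite-time breakdown at position $X(t_c)$. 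The remaining sign combinations ($q_0\le 0$, $s_0\le 0$) reduce to the primary case after a finite transit of the $(q,s)$ trajectory past any reflection.

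The main obstacle is the borderline case $A_0=-k/(N-2)$ in high dimensions $N\ge 4$: there the leading coefficient in $A(\Gamma)$ vanishes, so the integrand above is driven by the subleading $\Gamma^{N-1}$ term and the integral converges (it only diverges logarithmically when $N=3$). In this critical case the linear-divergence argument fails, and one must either sharpen the asymptotic estimate by tracking the $\Gamma^{N-1}$ contribution, or adapt the bead/envelope construction from the $c>0$ roadmap: the two envelope functions $\eta_1,\eta_2$ that bound global solutions cannot both remain positive when $A$ retains a single sign, which obstructs the global positivity of $\eta$ and forces breakdown.
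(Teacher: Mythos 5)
Your derivation of $A(\Gamma)=(A_0+\tfrac{k}{N-2})\Gamma-\tfrac{k}{N-2}\Gamma^{N-1}$ and the integrating-factor identity are correct, and for the strict inequality $A_0<-\tfrac{k}{N-2}$ your argument is essentially the one the paper gives in its Proposition 6.4: both reduce to the ODE $(\eta\, e^{-k\int^t s/q})'=\tfrac{A}{q}e^{-k\int^t s/q}$, use the Tan convergence rates $q\sim(t+1)^{-1}$, $s\sim(t+1)^{-N}$, note that $e^{k\int_\tau^t s/q}$ stays uniformly bounded when $N\ge 3$, and conclude that $\tfrac{A}{q}\le -C_m<0$ for large $t$, forcing $\eta$ to hit zero linearly in time. (Your explicit choice $\mu=q_0\Gamma/q$ only works as written when $q_0>0$; the paper sidesteps the $q_0\le 0$ case by starting the integration at a time $t_1$ beyond which the asymptotics have kicked in, so that $q>0$ there. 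That is only a presentational difference, not a gap.)

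The genuine gap is the borderline case $A_0=-\tfrac{k}{N-2}$, which your own last paragraph correctly flags but does not resolve. There $A(\Gamma)=-\tfrac{k}{N-2}\Gamma^{N-1}$ and the integrand in your formula decays like $(t+1)^{-(N-2)}$; for $N\ge 4$ the integral converges, so $\mu\eta$ has a finite limit and no contradiction is obtained. Neither of your two proposed repairs is worked out, and the second one (importing the bead/envelope functions $\eta_1,\eta_2$) does not directly transfer: that construction exploits the periodicity of the $(q,s)$ orbits for $c>0$, and for $c=0$ the orbits are not periodic — they approach the degenerate equilibrium $(0,0)$. The paper's fix (Proposition 6.11) is a genuinely different idea: rather than working in the $(\eta,w)$ variables at all, it observes that $A_0=-\tfrac{k}{N-2}$ is equivalent along the characteristic to the algebraic constraint $qp-ks=-\tfrac{k}{N-2}\rho$, so $p=\tfrac{ks}{q}-\tfrac{k\rho}{(N-2)q}$, and substituting this into the $\rho$-equation yields $\rho'=\tfrac{k}{q(N-2)}\rho^2-\rho\bigl(q(N-1)+\tfrac{ks}{q}\bigr)$. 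Since $q\to 0^+$ and $ks/q$ stays bounded, this is a Riccati equation with a positive, eventually unbounded, quadratic coefficient, giving finite-time blow-up of $\rho$. You would need an argument of this type (or an equivalent sharpened estimate) to cover the $\le$ in the hypothesis for $N\ge 4$.
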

\begin{remark}
\label{rembifur}
This result offers  a simple criterion for verifying  breakdown. It also highlights the critical nature of the case $N=2$. For $N\geq 3$, there is a flat cutoff that bounds the subcritical region from one side. In contrast, when $N=2$, this flat cutoff disappears:  $A_0(r)$ may take arbitrarily negative values and still remain within the subcritical region. 
\end{remark}

In the next theorem, we describe precisely what happens for initial data satisfying 
$$
A_0(r) >-\frac{k}{N-2} \quad \text{for all}\quad  r>0.
$$
\begin{theorem}[Global solution for $N\geq 3$ with zero background]
\label{c0ctcn}
Suppose $c=0$ and $N\geq 3$ in \eqref{mainsys}. If for every $r>0$, the point 
$$
(r,u_0(r),\phi_{0r}(r),u_{0r}(r),\rho_0(r))
$$
lies in the set 
$$
\Sigma_N\cup \left\{ (\alpha,x,y,z,0): x>0, z\geq -\frac{ky}{x}  \right\},
$$
where 
$$\Sigma_N \subseteq \left\{(\alpha,x,y,z,\omega): y<0,\omega>0,\ \frac{xz+ky}{\alpha\omega} >-\frac{k}{N-2}\right\} $$ 
is the set defined in  Definition \ref{defsigman}, then  solution exists globally in time.  

Moreover, if there exists an $r_c>0$ such that
$$
(r_c,u_0(r_c),\phi_{0r}(r_c),u_{0r}(r_c),\rho_0(r_c))\notin\Sigma_N\cup \left\{ (\alpha,x,y,z,0): x>0, z\geq -\frac{ky}{x}  \right\},
$$ 
then finite-time breakdown occurs. 
\end{theorem}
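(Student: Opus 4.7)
The plan is to adapt to the $c=0$, $N\geq 3$ regime the same reduction-and-sandwich strategy laid out in the roadmap for $c>0$, with two essential new features: the auxiliary variable $\Gamma$ now decreases monotonically (rather than cycling periodically) and one must handle separately the degenerate locus where the initial density vanishes. Along the characteristic through $\beta>0$, the PDE reduces to the four-ODE system for $(\rho,p,q,s)$, and the decoupled $(q,s)$-subsystem carries the closed-form invariant $R_N(q,s)=s^{-2/N}\bigl(q^2+2ks/(N-2)\bigr)$. Because $\phi_{0r}(r)=-r^{1-N}\int_0^r\sigma^{N-1}\rho_0(\sigma)\,d\sigma\leq 0$ for admissible nonnegative data, one has $s_0=-\phi_{0r}(\beta)/\beta\geq 0$, and when $s_0>0$ the $(q,s)$-trajectory is a bounded homoclinic orbit to the origin with $\Gamma(t)=(s(t)/s_0)^{1/N}$ decreasing monotonically from $1$ to $0$ on $[0,\infty)$.

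For sufficiency on the set where $\omega>0$, the transformation $\eta=\Gamma^{N-1}/\rho$, $w=p\Gamma^{N-1}/\rho$ reduces the $(\rho,p)$-block to the linear planar ODE $\eta'=w$, $w'=-k(N-1)s\,\eta+k\Gamma^{N-1}$, and the invariant $A(\Gamma)=(A_0+k/(N-2))\Gamma-(k/(N-2))\Gamma^{N-1}$ has unique positive root $\kappa=(1+A_0(N-2)/k)^{1/(N-2)}$. Since $\Gamma$ now sweeps monotonically over $(0,1]$, the analogue of the condition $\kappa\in(\Gamma_{\min},\Gamma_{\max})$ in the periodic case becomes $\kappa\in(0,1)$, equivalently $-k/(N-2)<A_0<0$. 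Under this opening I would construct two nonnegative sandwich functions $\eta_1,\eta_2$ whose isolated zeros occur exactly at $\Gamma=\kappa$, and verify the sandwich lemma: if $\eta_0$ lies strictly between $\eta_1(0),\eta_2(0)$ (or $w_0$ between $\eta_1'(0),\eta_2'(0)$ when $q_0=0$), then $\eta$ is trapped between $\eta_1,\eta_2$ for all $t>0$, giving uniform positivity of $\eta$ on every compact interval and hence the boundedness of $\rho,p$ required by Theorem~\ref{local}. This is the content of the set $\Sigma_N$.

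For necessity on $\{\omega>0\}$, the violation of the subcritical condition produces one of two blow-up mechanisms: either $A_0(r_c)\leq -k/(N-2)$, in which case Theorem~\ref{c0ftbn} applies directly, or $-k/(N-2)<A_0(r_c)$ but the sandwich inequality at $t=0$ fails, in which case a converse version of the sandwich lemma forces $\eta$ to vanish in finite time, producing simultaneous $\rho$- and $u_r$-blow-up. For the degenerate locus $\{\omega=0\}$ I would work directly in $(p,q,s)$-coordinates: since $\rho\equiv 0$ on the characteristic the density equation is trivial and only $p$ requires control, and I would use the identity $(up+k\phi_r)'=-p\,(up+k\phi_r)$ derived from the momentum equation together with the characteristic evolution $(\phi_r)'=(N-1)us$ of the Poisson force. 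This identity shows $U:=up+k\phi_r$ preserves the sign of $U_0=xz+ky$, and when $U_0\geq 0$ the resulting lower bound $p\geq ks/q$, combined with the homoclinic asymptotics $ks/q\to 0$ and the monotonicity $p'\leq 0$ valid because $s\geq 0$, confines $p$ to $[0,p_0]$ globally. When $U_0<0$, $p$ becomes negative and the inequality $p'\leq -p^2$ forces $p\to -\infty$ in finite time; this yields the precise characterization $z\geq -ky/x$ in the special set.

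The main obstacle will be verifying the sandwich lemma on the infinite time horizon: unlike the $c>0$ case where periodicity reduces everything to a finite-time check over one cycle, here the coefficients of the linear $(\eta,w)$-system decay to zero as $\Gamma\to 0$ and $s\to 0$, so one must carefully propagate the nonnegativity and isolated-zero structure of $\eta_1,\eta_2$ through this degeneration and rule out the possibility that $\eta$ escapes the sandwich asymptotically. The interface between the $\omega>0$ and $\omega=0$ regimes is also delicate, since the transformation $\eta=\Gamma^{N-1}/\rho$ is singular as $\omega\to 0^+$ while the direct $(p,q,s)$-analysis stays regular, forcing a fundamentally different argument on each side and a separate verification that the resulting subcritical region pieces together consistently.
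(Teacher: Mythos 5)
Your overall architecture — characteristic reduction, the $\eta=\Gamma^{N-1}/\rho$ transform, the invariant $A(\Gamma)$, the $U=up+k\phi_r$ identity $U'=-pU$ on the vacuum locus — is all on target, and the conserved quantity $U$ is indeed equal to $r(qp-ks)$, which is what the paper uses. However, two steps would not go through as written.

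First, your claim that ``$\Gamma$ now sweeps monotonically over $(0,1]$'' and that the admissible range is therefore $\kappa\in(0,1)$, i.e.\ $-k/(N-2)<A_0<0$, is only correct when $q_0\geq 0$. For $q_0<0$ we have $\Gamma'=-q\Gamma>0$ initially, so $\Gamma$ \emph{rises} past $1$ to a maximum $(s_{\max}/s_0)^{1/N}$ at the unique time $t_q$ where $q$ vanishes, and only then decays to $0$. This fundamentally changes the sign structure of $A(\Gamma(t))$: the subcritical region $\Sigma_N$ in the paper is stratified into four $a$-regimes — $a\in(-k/(N-2),0)$, $a=0$, $a\in(0,\cdot)$, and $a$ large — and the constraints on $(\eta_0,w_0)$ in each regime depend on $\operatorname{sgn}q_0$. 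In particular, when $A_0\geq 0$ and $q_0>0$ no extra condition on $\eta_0$ is needed (Proposition \ref{c0prop1}), when $A_0<0$ the bound is one-sided ($\eta_0>\eta_1(0)$ or $\eta_0<\eta_1(0)$ depending on $\operatorname{sgn}q_0$), and a genuine two-sided sandwich $\eta_1(0)<\eta_0<\eta_2(0)$ appears only when $q_0<0$, $A_0\geq 0$ and $\kappa<(s_{\max}/s_0)^{1/N}$ (Proposition \ref{c0prop3}). Your ``universal sandwich'' picture covers only one of these sub-cases and would incorrectly exclude the entire $A_0\geq 0$ portion of $\Sigma_N$. You also do not address the boundary case $A_0=-k/(N-2)$, which the paper handles separately in Proposition \ref{c0prop4} by a completely different argument in the original $(p,\rho)$ variables.

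Second, the necessity argument on the vacuum locus is too hasty. From $U_0<0$ you get $qp<ks$ for all time, which is an \emph{upper} bound on $p$ (a positive one, since $s,q>0$); it does not make $p$ negative. The actual proof in Proposition \ref{proprho0} has to assume $p$ never blows up, deduce the two-sided confinement $k(N-1)\int_t^\infty s\,d\tau\leq p(t)\leq ks(t)/q(t)$, plug the resulting $p$-decay rate and the $(q,s)$ convergence rates from Proposition \ref{tanconv} into the exact identity $qp-ks=(q_0p_0-ks_0)e^{-\int_0^t q}e^{-\int_0^t p}$, and derive a contradiction between the algebraic decay of the left side and the different (and in dimension $N=2$, logarithmically corrected) decay of the right side. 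That rate-matching step is the crux of the blow-up and is missing from your sketch.
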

The quantity $a$ in Definitions \ref{defsigman} and  \ref{defsigma2} is the same parameter that appears in \eqref{acoord}. 
\begin{definition}
\label{defsigman}
A point $(\alpha,x,y,z,\omega)\in\Sigma_N$ (for $N\geq 3$) if and only if one of the following holds:
\end{definition}
\begin{itemize}
    \item Case $a\in \left( -\frac{k}{N-2},0 \right)$: 
    \begin{align}
    \label{aneg}
    \begin{aligned}
    & z > -\frac{ky}{x} + \frac{\alpha a}{x\eta_1(0)},\quad  x\neq 0, \\
    & z > \frac{ky}{\alpha a}\frac{d\eta_{1}}{dt}(0), \quad   x=0.
    \end{aligned}
    \end{align}
    \item Case $a=0$: 
    \begin{align}
    \label{azero}
    \begin{aligned}
    & z=-\frac{ky}{x},\ \omega>\frac{1}{\eta_2(0)},\quad  x<0,\\
    & z=-\frac{ky}{x},\ \omega>0 ,\quad  x>0.
    \end{aligned}
    \end{align}
    \item Case $a\in \left(0, \frac{k}{N-2}\left( \left(\frac{-\alpha y_{N}}{y} \right)^{1-\frac{2}{N}}-1\right)\right)$: 
    \begin{align}
    \label{apos1}
    \begin{aligned}
    & -\frac{ky}{x} + \frac{\alpha a}{x\eta_1(0)} < z < -\frac{ky}{x} + \frac{\alpha a}{x\eta_2(0)},\quad  x<0,\\
    & \omega > 0,\quad  x>0.
    \end{aligned}
    \end{align}
    \item Case $a\in \left[ \frac{k}{N-2}\left( \left(\frac{-\alpha y_{N}}{y} \right)^{1-\frac{2}{N}}-1\right), \infty\right)$: 
    \begin{align}
    \label{apos2}
    \begin{aligned}
    & \omega > 0,\quad  x>0.
    \end{aligned}
    \end{align}
\end{itemize}
Here the parameter $y_N$ satisfies  
$$
0< -\frac{y}{\alpha} < y_{N} 
:= \left( \frac{(N-2)}{2k} \right)^{\frac{N}{N-2}}\left(R_N\left( \frac{u_0(\alpha)}{\alpha}, \frac{-\phi_{0\alpha}(\alpha)}{\alpha} \right)\right)^{\frac{N}{N-2}}.
$$
The explicit expression for $R_N(\cdot,\cdot)$ is given in \eqref{Rconstant} (with $c=0$). The functions 
$$
\eta_i = \eta_i\left(t; \frac{u_0(\alpha)}{\alpha}, \frac{\phi_{0r}(\alpha)}{\alpha}, A_0(\alpha) \right),\quad i=1,2,
$$ 
are defined later via linear initial value problems.

\begin{theorem}[Global solution for $N=2$ with zero background]
\label{c0ctc2}
Suppose $c=0$ and $N=2$ in equation \eqref{mainsys}. If for every $r>0$, the point
$$
(r,u_0(r),\phi_{0r}(r),u_{0r}(r),\rho_0(r))
$$
belongs to 
$$
\Sigma_2\cup  \left\{ (\alpha,x,y,z,0): x>0, z\geq -\frac{ky}{x}  \right\},
$$
where $\Sigma_2 \subseteq \left\{(\alpha,x,y,z,\omega): y<0,\omega>0\right\}  $ is defined in Definition \ref{defsigma2}, then the corresponding solution exists globally in time.

Conversely, if there exists some $r_c>0$ such that
$$
(r_c,u_0(r_c),\phi_{0r}(r_c),u_{0r}(r_c),\rho_0(r_c))\notin\Sigma_2\cup \left\{ (\alpha,x,y,z,0): x>0, z\geq -\frac{ky}{x}  \right\},
$$ 
then solution experiences finite-time breakdown.
\end{theorem}

\begin{definition}
\label{defsigma2}
A point $(\alpha,x,y,z,\omega)\in\Sigma_2$ if and only if one of the following holds:
\end{definition}
\begin{itemize}
    \item Case $a\in \left(-\infty,0 \right)$:
    \begin{align}
    \label{anegN2}
    \begin{aligned}
    & z > -\frac{ky}{x} + \frac{\alpha a}{x\eta_1(0)},\quad  x\neq 0, \\
    & z > \frac{ky}{\alpha a}\frac{d\eta_{1}(0)}{dt}, \quad x=0.
    \end{aligned}
    \end{align}
    \item Case $a=0$: 
    \begin{align}
    \label{azeroN2}
    \begin{aligned}
    & z=-\frac{ky}{x},\ \omega>\frac{1}{\eta_2(0)},\quad  x<0,\\
    & z=-\frac{ky}{x},\ \omega>0,\quad  x>0.
    \end{aligned}
    \end{align}
    \item Case $a\in \left(0, \frac{k}{2}\ln\left( \frac{-\alpha y_{2}}{y} \right) \right)$:
    \begin{align}
    \label{apos1N2}
    \begin{aligned}
    & -\frac{ky}{x} + \frac{\alpha a}{x\eta_1(0)} < z < -\frac{ky}{x} + \frac{\alpha a}{x\eta_2(0)},\quad  x<0,\\
    & \omega >0,\quad x>0.
    \end{aligned}
    \end{align}
    \item Case $a\in \left[\frac{k}{2}\ln\left( \frac{-\alpha y_{2}}{y} \right), \infty\right)$:
    \begin{align}
    \label{apos2N2}
    \begin{aligned}
    & \omega >0,\quad  x>0.
    \end{aligned}
    \end{align}
\end{itemize}
Here, 
$$
0<-\frac{y}{\alpha}<y_{2} 
:= e^{\frac{R_2\left( \frac{u_0(\alpha)}{\alpha}, \frac{-\phi_{0\alpha}(\alpha)}{\alpha} \right)}{k}}.
$$
The explicit expression for $R_2(\cdot,\cdot)$ is given in \eqref{Rconstant} (with $c=0$).



The rest of this paper is arranged as follows. 
Section \ref{prelim} presents preliminary calculations  
showing how the full dynamics can be reduced to a weakly coupled system of four ODEs along characteristics. 
Section \ref{secqs} focuses on two of these four ODEs, that are decoupled from the remaining equations, and obtains a closed form of trajectory curves with related properties established for later use.
Section \ref{secblowup} entails conditions under which the velocity gradient or density experiences finite-time blow up. Section \ref{secglobal} constructs the precise subcritical region leading to global-in-time  solutions and proves Theorem 2.1. The zero-background case is analyzed in Section \ref{secc0}. Finally,  Section \ref{secconc} provides concluding remarks.  The code for  generating numerical figures in this paper is available at https://gitlab.com/mbhatnagar011/pressureless-sphericalsymm-ep.

\section{Preliminary calculations}
\label{prelim}
From Theorem \ref{local}, we observe that extending local solutions requires obtaining  suitable bounds on the velocity gradient. The following result, originally derived by the authors in \cite{TT22}, identifies the specific quantities that must be controlled for system  \eqref{mainsys}. 
\begin{lemma}
\label{radialfield}
Let $f(\mathbf{x})=g(r)$ be a radially symmetric function. Then,  for $\mathbf{x}\neq 0$, the Hessian matrix $\nabla^2f$ has exactly two eigenvalues: $g_{rr}$ and $g_r/r$. Moreover, the vector $x$ is an eigenvector associated with the eigenvalue $g_{rr}$,while the orthogonal subspace $x^\perp$ forms the eigenspace corresponding to the eigenvalue $g_r/r$. 
\end{lemma}
Though this lemma was previously proved in \cite{TT22}, we include the proof here for the sake of completion.
\begin{proof}
Let $r=|x|$. Taking the gradient of $f$, we obtain 
$$
\nabla f = g_r\frac{x}{r}.
$$
Taking gradient again,
$$
\nabla^2 f = \left( g_{rr} - \frac{g_r}{r}\right) \frac{x\otimes x}{r^2} + \frac{g_r}{r} \mathbb{I}. 
$$
Note that $(\nabla^2 f) x = g_{rr} x$ and $(\nabla^2 f) v = \frac{g_r}{r}v$ for any $v\in x^\perp$. This completes the proof of the Lemma.
\end{proof}
Using this Lemma, we conclude that the Hessians of radial functions are diagonalizable with the same orthonormal basis of eigenvectors. From Theorem \ref{local}, we know that in order for solutions to persist for all time, the velocity gradient matrix must remain bounded. Since the velocity field is radial, we have the following relation for $\mathbf{u}$, $u$ in \eqref{gensysmom} and \eqref{mainsysmom}, respectively, 
\begin{align*}
& \mathbf{u} = u\frac{\mathbf{x}}{r}.
\end{align*}
Hence, the two eigenvalues of $\nabla\mathbf{u}$ will be $u_r$ and $u/r$.
Therefore, to ensure that the velocity gradient is bounded, we must control the two quantities: $u_r, u/r$. 

Inspired by this observation, we derive ODEs along the characteristic paths, 
\begin{align}
\label{chpath}
& \left\{ (t,X(t)): \frac{dX}{dt} = u(t,X),\ X(0) = \beta \right\},
\end{align}
for these key quantities.
Rearranging  equation \eqref{mainsysmass}, we obtain,
\begin{align}
\label{rhoeq}
& \rho_t + (\rho u)_r = -(N-1)\rho\frac{u}{r}.
\end{align}
Next, observe that by \eqref{mainsyspoisson}, 
$$
-\phi_{rr} = (N-1)\frac{\phi_r}{r} +\rho-c.
$$
Taking spatial derivative of \eqref{mainsysmom} and using the  equation above yields
\begin{align*}
u_{rt} + uu_{rr} + u_r^2 & = -k\phi_{rr}\\
& = k(N-1)\frac{\phi_r}{r} + k(\rho -c). \numberthis \label{peq}\\
\end{align*}
Next using \eqref{mainsysmom},
\begin{align*}
\left(\frac{u}{r}\right)_t + u\left(\frac{u}{r}\right)_r & = \frac{u_t+uu_r}{r} - \frac{u^2}{r^2}\\
& = -k\frac{\phi_r}{r}- \frac{u^2}{r^2}.\numberthis\label{qeq}
\end{align*}
Upon integrating \eqref{mainsyspoisson} from $0$ to $r$,
\begin{align}
\label{phirlim}
-r^{N-1}\phi_r + \lim_{r\to 0^+} r^{N-1}\phi_r & = \int_0^r(\rho-c)y^{N-1}\, dy.
\end{align}
Local well-posedness requires specifying a boundary condition at the origin. The standard convention is to impose a zero boundary conditions, meaning that the corresponding limit vanishes as $r\to 0$. Likewise  the density flux ($\rho ur^{N-1}$) tends to zero, indicating that no mass is lost at the origin. 

Taking the time derivative of \eqref{phirlim}, we obtain 
\begin{align*}
-r^{N-1}\phi_{rt} & = \int_0^r \rho_t y^{N-1}\, dy\\
& = -\int_0^r (\rho u y^{N-1})_y\, dy\qquad \text{from \eqref{mainsysmass}}\\
& = -\rho ur^{N-1} + \lim_{r\to 0^+} \rho ur^{N-1}\\
& = -\rho ur^{N-1}.
\end{align*}
As a consequence, we obtain $\phi_{rt} = \rho u$. We now use this identity, together with the previously derived  expression for $\phi_{rr}$,  in the following calculation. 
\begin{align*}
\left(\frac{\phi_r}{r}\right)_t + u\left(\frac{\phi_r}{r}\right)_r & = \frac{\phi_{rt}}{r} + u\frac{\phi_{rr}}{r} - \frac{u}{r}\frac{\phi_r}{r}\\
& =  \frac{\rho u}{r} - (N-1)\frac{u}{r}\frac{\phi_r}{r} - (\rho-c)\frac{u}{r} - \frac{u}{r}\frac{\phi_r}{r}\\
& = - N\frac{u}{r}\frac{\phi_r}{r} +c\frac{u}{r}.\numberthis\label{seq}
\end{align*}
Set 
\begin{align}
\label{vartransf}
p:=u_r,\quad q:=\frac{u}{r},\quad s:=-\frac{\phi_r}{r}.
\end{align} 
Using equations \eqref{rhoeq}, \eqref{peq}, \eqref{qeq} and \eqref{seq}, we obtain the following ODE system,
\begin{subequations}
\label{fullodesys}
\begin{align}
& \rho' = -(N-1)\rho q - p\rho, \label{rhoode}\\
& p' = -p^2 - k(N-1)s + k(\rho -c), \label{pode}\\
& q' = ks-q^2, \label{qode}\\
& s' = -q(c+Ns), \label{sode}
\end{align}
\end{subequations}
with initial data 
$$
\rho_0 := \rho(0,\beta), p_0:=p(0,\beta), q_0:=q(0,\beta), s_0:=s(0,\beta).
$$ 
Here, prime $'$ denotes differentiation along the characteristic path \eqref{chpath}. Note that we continue to use the notation $\rho$ for the density as in \eqref{mainsysmass}, which is a function of time and the spatial variable, as well as for the solution to the ODE \eqref{rhoode}, which is another function of time only (for a fixed parameter $\beta$). Similarly for the notation $\rho_0$. However, it will be clear from context whether we are referring to $\rho$ as the solution of the main PDE system \eqref{mainsys} or as the unknown in \eqref{rhoode}. 

For the special case $N=4$, we introduce the auxiliary variable (which follows the same notation overload as noted above),
\begin{align*}
    & f_4 :=  \frac{c-\rho+4s}{4\rho\left(s+\frac{c}{4}\right)^{\frac{1}{4}}},
\end{align*}
which satisfies the second-order differential equation
\begin{align}
\label{eq:perassum}
f_4'' + k(c+3s)f_4 = 0.
\end{align}
 We will express the periodicity assumption in terms of this quantity. Later, we will show that for any $N\geq 2$, the solutions of the decoupled system \eqref{qode}-\eqref{sode} are periodic. 
 For the case $N=4$, we will make the following assumption.
\begin{assump}
\label{perassump}
The fundamental matrix of \eqref{eq:perassum} is periodic,  with the same period  as the system \eqref{qode}-\eqref{sode}.
\end{assump}
This assumption is motivated by numerical observations.

\section{The $q-s$ system}
\label{secqs}
Observe that \eqref{qode}, \eqref{sode} are decoupled from \eqref{rhoode}, \eqref{pode}. In fact, one can obtain a closed-form  expression for the trajectory curve associated with the system  \eqref{qode}-\eqref{sode}. The following  Lemma describes the behavior of the solutions. 
\begin{lemma}
\label{qsregularity}
(Global behavior of the system \eqref{qode}-\eqref{sode}).
\begin{enumerate}
\item Existence and boundedness. \\
The functions $s(t),q(t)$ exist for all time and remain uniformly bounded if and only if 
$$
s_0>-\frac{c}{N}.
$$
In particular, if $s_0\leq -\frac{c}{N}$, then $q\to-\infty$ in finite time.
\item Trajectory curves. \\
When $s_0>-\frac{c}{N}$, the solutions lie on bounded trajectory curves given by 
\begin{align}
& \frac{1}{\left(s+\frac{c}{2}\right)}\left( q^2 + \frac{kc}{2} + k\left( s+\frac{c}{2}\right) \ln\left(s+\frac{c}{2}\right) \right) = \text{constant},\qquad N=2, \label{trajectory2}\\
& \left( s+\frac{c}{N}\right)^{-\frac{2}{N}}\left( q^2 + \frac{kc}{N} + \frac{2k\left(s+\frac{c}{N}\right)}{N-2} \right)=\text{constant},\qquad N>2. \label{trajectoryN}
\end{align}
\item Periodicity and rotation direction. \\
When $s_0>-\frac{c}{N}$, the solutions are periodic,  and their trajectories rotate clockwise on the $(q,s)$ plane as time increases.
\end{enumerate}
\end{lemma}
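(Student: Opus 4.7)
The plan is to reduce to a cleaner system by the shift $\tilde{s} := s + c/N$, under which \eqref{qode} and \eqref{sode} become
$$q' = k\tilde{s} - \tfrac{kc}{N} - q^2, \qquad \tilde{s}' = -Nq\tilde{s}.$$
The multiplicative form of the $\tilde{s}$-equation immediately gives $\tilde{s}(t) = \tilde{s}_0 \exp\bigl(-N\int_0^t q(\tau)\,d\tau\bigr)$, so $\tilde{s}$ preserves its initial sign and is identically zero iff $\tilde{s}_0 = 0$. This cleanly splits the analysis into the cases $\tilde{s}_0 > 0$, $\tilde{s}_0 = 0$, and $\tilde{s}_0 < 0$, which correspond precisely to the trichotomy $s_0 > -c/N$, $s_0 = -c/N$, $s_0 < -c/N$.

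For part (2), I would assume $\tilde{s}_0 > 0$ and derive the trajectory law by the standard trick of treating $q^2$ locally as a function of $\tilde{s}$. Using the chain rule where the flow is smooth,
$$\frac{d(q^2)}{d\tilde{s}} = \frac{2qq'}{\tilde{s}'} = -\frac{2(k\tilde{s} - kc/N - q^2)}{N\tilde{s}} = \frac{2}{N\tilde{s}}\,q^2 - \frac{2k}{N} + \frac{2kc}{N^2\tilde{s}},$$
which is a first-order linear ODE in $\tilde{s}$ for $y := q^2$. Multiplying by the integrating factor $\tilde{s}^{-2/N}$ (for $N \geq 3$) or $\tilde{s}^{-1}$ (for $N = 2$) and integrating gives, after routine rearrangement, the closed forms \eqref{trajectoryN} and \eqref{trajectory2}. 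By continuity and ODE uniqueness, these identities extend across the zeros of $q$ and hold along the full trajectory.

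For part (3) and the sufficiency half of part (1), I would extract boundedness, periodicity, and orientation from the level sets of the conserved quantity. Solving the conservation law for $q^2$ writes each trajectory in $\tilde{s}>0$ as $q^2 = F_N(\tilde{s};R)$ with $R$ fixed by initial data; for $N \geq 3$, $F_N(\tilde{s};R) = R\tilde{s}^{2/N} - kc/N - 2k\tilde{s}/(N-2)$. Elementary calculus on $F_N(\cdot;R)$ shows that when $c>0$, $F_N \geq 0$ only on a bounded subinterval $[\tilde{s}_{\min},\tilde{s}_{\max}] \subset (0,\infty)$, so the level set is a smooth closed curve in $\tilde{s}>0$ enclosing the unique interior equilibrium $(q,\tilde{s})=(0,c/N)$. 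Since the restricted vector field is smooth and nowhere vanishing on this curve, the orbit is periodic in time. Clockwise rotation follows from a sign check at a reference point: on the positive $q$-axis with $s=0$ we have $s' = -qc < 0$, sending the orbit into the fourth quadrant.

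For the necessity direction of part (1), I would show that $\tilde{s}_0 \leq 0$ forces $q \to -\infty$ in finite time. If $\tilde{s}_0 = 0$ (and $c>0$), then $\tilde{s} \equiv 0$ reduces the $q$-equation to the Riccati-type inequality $q' = -kc/N - q^2 \leq -kc/N$, which drives $q$ past zero in finite time, after which $q' \leq -q^2$ blows $q$ up to $-\infty$. If $\tilde{s}_0 < 0$, then $\tilde{s}(t) < 0$ for all $t$ and $q' \leq -kc/N - q^2$ gives the same conclusion. The main obstacle is part (3): one must verify that the level curves really are bounded simple closed curves in the $\tilde{s}>0$ half-plane that enclose the equilibrium, so that the flow is time-periodic rather than asymptotic; this requires a careful case distinction between $c>0$ and $c=0$, and between $N=2$ and $N\geq 3$, with the explicit algebraic form of the conserved quantity and monotonicity analysis of $F_N(\cdot;R)$ doing the actual work.
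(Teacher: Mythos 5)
Your proposal is correct and takes essentially the same route as the paper's own proof: the shift $\tilde s = s + c/N$, the chain-rule derivation $\frac{d(q^2)}{d\tilde s} = \frac{2qq'}{\tilde s'}$ with integrating factor $\tilde s^{-2/N}$ (or $\tilde s^{-1}$ for $N=2$), the Riccati blow-up for $\tilde s_0 \leq 0$, and the sign check $\tilde s' < 0$ for $q>0$ for orientation all mirror the paper. The only minor difference is in establishing closed orbits: the paper appeals to the $q \to -q$ symmetry of the level sets together with imaginary eigenvalues at $(0,c/N)$, while you argue directly that the bounded level set is a simple closed curve on which the vector field is nonvanishing; these are equivalent in spirit and scope (both implicitly use $c>0$ for assertion (3)).
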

We will frequently use the shifted variable,
\begin{align}
\label{tildestrans}
\tilde s:= s+\frac{c}{N},\quad \tilde s_0:= s_0 + \frac{c}{N}, 
\end{align}
and the system \eqref{tildesode} in place of $s$ and \eqref{sode} as the situation demands,  since this substitution simplifies subsequent calculations. The equivalent $q,\tilde s$ system is,
\begin{subequations}
\label{qtildessys}
\begin{align}
& q' = k\tilde s - \frac{kc}{N} -q^2,\label{qtildesode} \\
& \tilde s' = -qN\tilde s. \label{tildesode}
\end{align}
\end{subequations}
\textit{Proof of Lemma \ref{qsregularity}:}
It is natural to use $\tilde s$ rather then $s$. We begin with the ``only if" part of the first assertion, proved via contradiction. 

From \eqref{tildesode},  $\tilde s(t)$ maintains its sign as long as $q(t)$ exists. Suppose $\tilde s_0\leq 0$, then $\tilde s(t) \leq 0$ for as long as the solution exists. Assume, for contradiction, that $q(t)$ remains bounded for all $t$.  From \eqref{qtildesode} we have,
$$
q' \leq -\frac{kc}{N} -q^2 \leq \min\left\{ -\frac{kc}{N}, -q^2\right\}.
$$
Since the right hand is strictly negative, there exists a finite time, $t_-\geq 0$ such that $q(t_-)<0$. Once $q$ is negative, its evolution satisfies a Riccati-type inequality, 
$$
q' <-q^2,\qquad q(t_-)<0,
$$
which implies that $q(t)$ blows up to $-\infty$ in finite forward time: 
$$
\lim_{t\to t_c^-} q = -\infty \quad \text{for some}\quad  t_c< t_- - 1/q(t_-).
$$
This contradicts the assumed global boundedness of $q$. Hence $\tilde s_0 >0$ is necessary.  

Next, we establish the ``if" part of the first assertion and derive the trajectory equations (the second assertion).  Assume $\tilde s_0 >0$.  Then $\tilde s(t)>0$ for all $t>0$ as long as the solution exists. Dividing \eqref{qtildesode} by \eqref{tildesode} yields 
\begin{align*}
& q\frac{dq}{d\tilde s}  = -\frac{k}{N} + \frac{kc}{N^2 \tilde s} + \frac{q^2}{N\tilde s},\\
& \frac{d(q^2)}{d\tilde s} - \frac{2q^2}{N\tilde s}  = \frac{2k}{N^2}\left(  \frac{c}{\tilde s} - N \right).
\end{align*}
Multiplying by the integrating factor $\tilde s^{-\frac{2}{N}}$, we obtain 
\begin{align*}
& \frac{d}{d\tilde s}\left( q^2\tilde s^{-\frac{2}{N}} \right) = \frac{2k}{N^2}\left( c\tilde s^{-1-\frac{2}{N}} - N\tilde s^{-\frac{2}{N}} \right).
\end{align*}
Because of the different exponents, the cases $N>2$ and $N=2$ must be treated separately. 
Case $N>2$. \\
Integrating gives 
\begin{align*}
R_N(q,\tilde s) :=\tilde s^{-2/N}\left(q^2 + \frac{2k\tilde s}{N-2} + \frac{kc}{N}\right) = \text{constant}.
\end{align*}
Case $N=2$. \\
Integration yields 
\begin{align*}
& R_2(q,\tilde s):=\frac{q^2}{\tilde s} + k\ln(\tilde s) + \frac{kc}{2\tilde s} = \text{constant}.
\end{align*}
Finally, from the trajectory relation for $N>2$, we obtain the bound ( valid whenever $\tilde s_0>0$),
\begin{align*}
& \tilde s < \left(\frac{R_N(N-2)}{2k}\right)^{\frac{N}{N-2}},\\
& |q|< \tilde s^{1/N}\sqrt{R_N}< R_N^{\frac{N}{2N-4}} \left(\frac{N-2}{2k}\right)^{\frac{N}{N-2}} .
\end{align*}
Hence, the solutions are uniformly bounded. Analogous bounds for $q,\tilde s$ can be derived in the case $N=2$ as well. 
This completes the proof to the first and second assertions. 

Observe that the trajectory equations are invariant under the transformation $q\to -q$. Combined with the fact that linearized system around the unique critical point, $(0,c/N)$ has purely imaginary eigenvalues, we conclude that every trajectory starting at any $\tilde s_0>0$ forms a closed curve encircling the critical point. Hence, the solutions are periodic. 

The direction of motion along  the trajectory follows directly from \eqref{tildesode}:
$$
\tilde s'<0 \quad \text{when} \quad  q>0 , \quad  \tilde s'>0 \quad \text{when}  \quad  q<0, 
$$
which shows that the trajectories move clockwise in the $q-\tilde s$-plane as time increases.

\qed

\begin{figure}[htb]
	\centering
    \input{TikzCode/Fig-qs.tex}
    
    \caption{Illustration of Theorem \ref{ctcn} with $N = 5, k=1, c = 1, q_0 = 0.2, s_0 = -0.1$.}
	\label{figqsphase}
\end{figure}

\begin{corollary}
\label{qsalltime}
The functions $q(t),s(t)$ in the  system \eqref{qode}-\eqref{sode} exist for all time. In particular, Assertions $2$ and $3$ of Lemma \ref{qsregularity} apply.
\end{corollary}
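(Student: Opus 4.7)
\textbf{Proof Proposal for Corollary \ref{qsalltime}.}

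The plan is to reduce the corollary to verifying the hypothesis $s_0>-c/N$ of Lemma \ref{qsregularity}(1) for every initial value of $s$ that can arise from a valid PDE initial datum $(\rho_0,u_0)$ to \eqref{mainsys}. Once that single inequality is established, the remaining conclusions (global existence on the characteristic, the closed-form trajectory curves \eqref{trajectory2}--\eqref{trajectoryN}, and periodicity together with the clockwise orientation) follow immediately from parts (1)--(3) of Lemma \ref{qsregularity}, with nothing new to prove.

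To check that $s_0>-c/N$ always holds, the key is to rewrite $s_0$ as an integral of $\rho_0$ using the Poisson equation. Starting from \eqref{phirlim} with the convention that $r^{N-1}\phi_r\to 0$ as $r\to 0^+$, evaluating at $r=\beta$ and dividing by $\beta^N$ yields
\begin{align*}
-\frac{\phi_{0r}(\beta)}{\beta}=\frac{1}{\beta^{N}}\int_{0}^{\beta}y^{N-1}\rho_0(y)\,dy-\frac{c}{N}.
\end{align*}
Since by definition $s_0=-\phi_{0r}(\beta)/\beta$, this means $\tilde s_0=s_0+c/N=\beta^{-N}\int_0^\beta y^{N-1}\rho_0(y)\,dy$. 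The nonnegativity $\rho_0\geq 0$ from the PDE setup \eqref{mainsys} then gives $\tilde s_0\geq 0$, and strict positivity $\tilde s_0>0$ as soon as $\rho_0$ is not identically zero on $[0,\beta]$ (which is the relevant case, since on any radius where $\rho_0$ vanishes throughout the enclosed ball the $\rho$--$p$ dynamics is trivial and does not affect the velocity gradient bounds we seek).

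With $s_0>-c/N$ in hand, invoking Lemma \ref{qsregularity}(1) yields that $q$ and $s$ exist for all time and are uniformly bounded along the characteristic, and parts (2) and (3) then deliver the explicit trajectory identity and the periodic clockwise orbits on the $(q,s)$ plane. The only mild subtlety, and the one point requiring care, is justifying the boundary term $\lim_{r\to 0^+}r^{N-1}\phi_r=0$ used in deriving the integral representation; this comes from the radial smoothness of $\phi_0$ near the origin, which is a consequence of the smoothness of $\rho_0$ imposed in Theorem \ref{local} combined with standard regularity for the Poisson equation with radial source. No further estimates are needed, so the proof is essentially a one-line integration of \eqref{mainsyspoisson} followed by an appeal to Lemma \ref{qsregularity}.
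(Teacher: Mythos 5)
The proposal follows essentially the same route as the paper: use the integrated Poisson equation \eqref{phirlim} at $t=0$ together with the zero boundary condition at the origin to write $s_0=-\phi_{0r}(\beta)/\beta=\beta^{-N}\int_0^\beta(\rho_0(\xi)-c)\xi^{N-1}\,d\xi$, invoke $\rho_0\ge 0$ to deduce $s_0>-c/N$, and then apply Lemma \ref{qsregularity}. That matches the paper's argument exactly.

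One thing in your write-up needs correcting, though it does not change the overall verdict. Your parenthetical justification for dismissing the degenerate case $\rho_0\equiv 0$ on $[0,\beta]$ is wrong in an instructive way: you argue it is harmless because "the $\rho$--$p$ dynamics is trivial and does not affect the velocity gradient bounds we seek." But the corollary is precisely a claim about the $q$--$s$ system, not about $\rho$ and $p$. If $\rho_0\equiv 0$ on $[0,\beta]$ then $s_0=-c/N$ exactly, i.e.\ $\tilde s_0=0$, and (for $c>0$) Lemma \ref{qsregularity}(1) shows $q\to-\infty$ in finite time; the conclusion of the corollary would actually \emph{fail}, not hold trivially. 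The paper itself only writes the strict inequality $s_0>-c/N$ without comment (implicitly assuming the enclosed ball carries some mass), which at least avoids giving a false reason. You should either state the nondegeneracy assumption explicitly, or drop the attempted justification.
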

\begin{proof}
Using the expression of $s$ from \eqref{vartransf} in \eqref{phirlim} at $t=0$, we have,
\begin{align*}
    s_0 & = -\frac{\phi_{0r}(\beta)}{\beta} = \frac{1}{\beta^N}\int_0^\beta (\rho_0(\xi)-c) \xi^{N-1}\, d\xi \\
    & > -\frac{c}{N}.
\end{align*}
Therefore, by Lemma \ref{qsregularity}, the conclusion follows. 
\end{proof}

\begin{remark}
\label{rempoissonforce}
This result asserts that in multiple dimensions, the Poisson forcing alone prevents  concentrations at the origin -- irrespective of how negative the initial velocity may be. This phenomenon does not hold in one dimension. A more detailed comparison between one-dimensional and multi-dimensional  cases is provided in Section \ref{secconc} (conclusion).
\end{remark}

We now set up notation for the trajectory curve obtained in Lemma \ref{qsregularity}, which will be used  here  and  in later sections. Define 
\begin{align}
\label{Rconstant}
R_N(q,\tilde s) = \left\{ 
\begin{array}{c}
    \tilde s^{-2/N}\left( q^2 + \frac{2k\tilde s}{N-2} + \frac{kc}{N} \right),\quad N>2,   \\
    \frac{q^2}{\tilde s} + k\ln(\tilde s) + \frac{kc}{2\tilde s},\quad N=2.
\end{array}
\right.
\end{align}
From Lemma \ref{qsregularity},
every solution satisfies 
$$
R_N(q,\tilde s) = R_N(q_0,\tilde s_0).
$$
When the value is fixed, we simply write $R_N$ to denote this constant; when needed as a function of $(q,\tilde s)$. 

The next Lemma pertaining to \eqref{qtildessys} will be useful in proving the blowup and global existence results in Section \ref{secglobal}.
\begin{lemma}
\label{qslemapp}
Consider the system \eqref{qode}-\eqref{sode}. Denote the two coordinates at which the trajectory curve intersects the $s$-axis by $(0,s_{min})$ and $(0,s_{max})$. Then
    $$
    0 <-s_{min}< s_{max}.
    $$
Moreover, the algebraic equation
$$
R_N(0, \tilde s) = R_N(q_0,\tilde s_0),
$$
(for positive arguments) has exactly two roots, which are $s_{min}+\frac{c}{N}$ and $s_{max}+\frac{c}{N}$.
\end{lemma}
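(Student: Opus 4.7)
The plan is to reduce the two-variable trajectory equation to a one-variable analysis. By Lemma~\ref{qsregularity}, the trajectory lies on the level set $\{R_N(q,\tilde s)=R_N(q_0,\tilde s_0)=:R_N\}$ in the $(q,\tilde s)$-plane. Substituting $q=0$ produces the algebraic equation in $y>0$,
\begin{equation*}
f(y):=R_N(0,y)=
\begin{cases}\displaystyle\frac{2k}{N-2}\,y^{1-2/N}+\frac{kc}{N}\,y^{-2/N},& N\geq 3,\\[4pt]\displaystyle k\ln y+\frac{kc}{2y},& N=2,\end{cases}
\end{equation*}
whose positive roots are precisely the $\tilde s$-coordinates $\tilde s_{min}$, $\tilde s_{max}$ of the two intersections of the trajectory with the $s$-axis. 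Thus the lemma reduces to showing that $f(y)=R_N$ has exactly two positive roots with $\tilde s_{min}<c/N<\tilde s_{max}$ and $\tilde s_{max}-c/N>c/N-\tilde s_{min}$.

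First I would analyze the shape of $f$. A short computation gives
\begin{equation*}
f'(y)=\frac{2k}{N^2}\,y^{-2/N-1}(Ny-c)\ (N\geq 3),\qquad f'(y)=\frac{k}{y^2}\Bigl(y-\tfrac{c}{2}\Bigr)\ (N=2),
\end{equation*}
so $f$ has a unique critical point at $y=c/N$, is strictly decreasing on $(0,c/N)$ and strictly increasing on $(c/N,\infty)$, with $f(y)\to+\infty$ as $y\to 0^+$ and as $y\to\infty$. Because the trajectory encloses the equilibrium $(0,c/N)$ by Lemma~\ref{qsregularity}, the constant $R_N$ strictly exceeds $\min f=f(c/N)$ (unless the trajectory degenerates to the equilibrium, in which case there is nothing to prove). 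Hence $f(y)=R_N$ has exactly two roots, one in $(0,c/N)$ and one in $(c/N,\infty)$, and they must coincide with $\tilde s_{min}$ and $\tilde s_{max}$; this yields $s_{min}<0<s_{max}$.

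The heart of the proof, and the step I expect to be the main obstacle, is the asymmetric inequality $-s_{min}<s_{max}$, equivalent to $\tilde s_{max}-c/N>c/N-\tilde s_{min}$. I would establish it by showing that $f$ is steeper on the left of its minimum than on the right, in the pointwise form
\begin{equation*}
|f'(c/N-\delta)|>|f'(c/N+\delta)|\qquad\text{for all }\delta\in(0,c/N),
\end{equation*}
which in both cases follows immediately from the fact that the factor $y^{-2/N-1}$ (or $y^{-2}$ when $N=2$) is strictly decreasing in $y$. Setting $g(\delta):=f(c/N+\delta)-f(c/N-\delta)$, this gives $g(0)=0$ and $g'(\delta)=f'(c/N+\delta)+f'(c/N-\delta)<0$, so $g$ is strictly negative on $(0,c/N)$. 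Applying $g$ at $\delta_-:=c/N-\tilde s_{min}\in(0,c/N)$ yields $f(c/N+\delta_-)<f(\tilde s_{min})=R_N=f(\tilde s_{max})$, and the monotonicity of $f$ on $(c/N,\infty)$ then forces $\tilde s_{max}>c/N+\delta_-$, as required. The only technical care is in packaging the $N=2$ and $N\geq 3$ cases uniformly; the underlying monotonicity and asymmetry arguments are structurally identical.
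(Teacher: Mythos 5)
Your proof is correct, and the key asymmetry step takes a genuinely different and cleaner route than the paper. The paper also reduces to analyzing $g(\tilde s)=R_N(0,\tilde s)$, identifies the unique minimum at $c/N$, and proves $g(c/N-\delta)>g(c/N+\delta)$; but its mechanism is to compute the \emph{third} derivative $g'''$, show $g'''<0$ only on $(0,2c/N)$, and integrate twice. Because $g'''$ changes sign at $\tilde s = c(N+1)/N$, the paper must first dispose of the case $\tilde s_{max}\geq 2c/N$ separately (where the inequality is trivial since $\tilde s_{min}>0$) and then restrict to $\tilde s_{max}<2c/N$ for the third-derivative argument. Your approach instead factors the first derivative in closed form,
$$f'(y)=\tfrac{2k}{N^2}\,y^{-1-2/N}(Ny-c),$$
and observes that the asymmetry $|f'(c/N-\delta)|>|f'(c/N+\delta)|$ reduces to the strict monotonicity of $y^{-1-2/N}$ on all of $(0,\infty)$. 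One integration then gives $f(c/N-\delta)>f(c/N+\delta)$ for every $\delta\in(0,c/N)$ with no case split. This is a real simplification: it makes the proof shorter, removes an unnecessary bifurcation, and treats $N=2$ and $N\geq 3$ uniformly (the paper only proves $N\geq 3$ and gestures at the $N=2$ case). Your justification that $R_N>f(c/N)$ unless $(q_0,\tilde s_0)$ is the equilibrium is also correct, since $R_N(q,\tilde s)\geq R_N(0,\tilde s)=f(\tilde s)\geq f(c/N)$ with equalities only at the fixed point.
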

\begin{proof}
We give a proof for $N>2$ only. Very similar arguments apply for $N=2$.  Note that for $\tilde s$, the statement is equivalent to saying, 
\begin{align}
\label{sminmaxineq}
& 0<\frac{c}{N} - \tilde s_{min}< \tilde s_{max}-\frac{c}{N} , 
\end{align}
where $\tilde s_{max}:=s_{max}+c/N$, $\tilde s_{min}:= s_{min}+c/N$.

Since the solution trajectories, $(q,\tilde s)$, are bounded periodic orbits around $(0,c/N)$, $(0,\tilde s_{min})$ and $(0,\tilde s_{max})$ lie on either side of $(0,c/N)$ implying that $c/N - \tilde s_{min}>0$.

Now consider the function, 
$$
g(\tilde s):= \frac{2k}{N-2}\tilde s^{1-\frac{2}{N}} + \frac{kc}{N}\tilde s^{-\frac{2}{N}}.
$$
This is essentially the left-hand-side of equation \eqref{trajectoryN} with $q=0$. We aim to show that $g(\tilde s)=R_N$ has exactly two positive roots. 

Observe that $g$ goes to infinity as $\tilde s\to 0$ or $\tilde s\to\infty$. Moreover, 
$$
\frac{dg}{d\tilde s} = \frac{2k}{N}\tilde s^{-\frac{2}{N}} - \frac{2kc}{N^2}\tilde s^{-1-\frac{2}{N}}. 
$$
Setting the derivative as zero yields a unique critical point at $\tilde s = \frac{c}{N}$, which is a global minimum. Hence, 
$$
g:(0,\infty )\longrightarrow (g(c/N),\infty)
$$
is strictly decreasing on $(0, c/N)$ and strictly increasing on $(0, c/N)$. 

Given the structure of $g$, we have that the algebraic equation,
$$
g(\tilde s) = R_N, 
$$
has exactly two roots if the constant, $R_N> g\left(\frac{c}{N}\right)$. These are precisely  $\tilde s_{min}$ and $\tilde s_{max}$. 

Clearly, if $\tilde s_{max}\geq \frac{2c}{N}$, the second inequality in \eqref{sminmaxineq} holds automatically since non-positive real numbers are not in the domain of $g$. In other words, we only need to prove the inequality for 
$$
R_N< g\left(\frac{2c}{N}\right)
$$
or equivalently, for  $\tilde s_{max}<2c/N$. We achieve this by making use of the sign of the third derivative of $g$:
$$
\frac{d^3g}{d\tilde s^3} = \frac{4k(N+2)}{N^4}\left( \tilde s - \frac{c}{N}\left(N+1\right) \right) \tilde s^{-3-\frac{2}{N}}.
$$
Since,
$$
\frac{c}{N}(N+1) > \frac{2c}{N}, 
$$
it follows that for all $\tilde s\in (0,2c/N)$, $\frac{d^3g}{d\tilde s^3} < 0$. 
Consequently, the second derivative is strictly decreasing on this interval, and therefore 
\begin{align*}
& \frac{d^2g}{d\tilde s^2}(\tilde s_1) > \frac{d^2g}{d\tilde s^2}(c/N) > \frac{d^2g}{d\tilde s^2} (\tilde s_2),\quad \tilde s_1\in \left(0,\frac{c}{N}\right),\ \tilde s_2\in\left( \frac{c}{N},\frac{2c}{N}\right).
\end{align*}
Consequently, for any $\delta <c/N$,
\begin{align*}
& \int_{c/N-\delta}^{c/N} \frac{d^2g}{d\tilde s^2}(\tilde s_1)d\tilde s_1 > \delta \frac{d^2g}{d\tilde s^2}(c/N) > \int_{c/N}^{c/N+\delta} \frac{d^2g}{d\tilde s^2}(\tilde s_2) d\tilde s_2,
\end{align*} 
and hence 
\begin{align*}
 -\frac{dg}{d\tilde s} (c/N-\delta) > \frac{dg}{d\tilde s} (c/N+\delta).
\end{align*}
The second inequality is a result of the fact that $\frac{dg}{d\tilde s}(c/N)=0$. Integrating both sides  with respect to $\delta$ from zero  upward yields 
$$
g(c/N-\delta) > g(c/N+\delta).
$$
Consequently, for the same shift from $\tilde s=c/N$ the function attains a higher value on the left than on the right. Therefore, the two points, $\tilde s_{min},\tilde s_{max}$, in the level set 
$$
\{\tilde s: g(\tilde s)=R_N\}
$$
must satisfy  
$$
\frac{c}{N}- \tilde s_{min} < \tilde s_{max} - \frac{c}{N}.
$$
This completes the proof. 
\end{proof}
We also include a short lemma that establishes a different relationship between $\tilde s$ and $q$,  which will be useful in later sections. To this end, we first define the following quantity,
\begin{align}
\label{Gammaexp}
\Gamma(t) = e^{-\int_0^t q(\tau)\, d\tau}. 
\end{align}
\begin{lemma}
\label{tildesexpq}
We have 
\begin{align}
\label{expqex}
\Gamma(t) = \left(\frac{\tilde s (t)}{\tilde s_0}\right)^{\frac{1}{N}},
\end{align}
for all $t>0$. In particular, 
$$
\int_0^T q(t) dt = 0
$$ and $\Gamma(t)$ is uniformly bounded and periodic. Here, $T$ is the period of the system \eqref{qtildessys} (equivalently \eqref{qode},\eqref{sode}).
\end{lemma}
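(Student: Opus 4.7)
The proof rests on directly integrating the decoupled equation \eqref{tildesode}. By Corollary \ref{qsalltime} we have $\tilde{s}_0 > 0$, and since $\tilde{s}' = -Nq\tilde{s}$ preserves the sign of $\tilde{s}$ along the flow, $\tilde{s}(t) > 0$ for all $t \geq 0$. Thus I can divide through by $\tilde{s}$ to obtain $(\ln \tilde{s})' = -Nq$, and integrating from $0$ to $t$ gives
\[
\ln \frac{\tilde{s}(t)}{\tilde{s}_0} = -N \int_0^t q(\tau)\, d\tau.
\]
Exponentiating and using the definition \eqref{Gammaexp} of $\Gamma$ produces $\Gamma(t)^N = \tilde{s}(t)/\tilde{s}_0$, which is exactly \eqref{expqex}.

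For the remaining assertions I would invoke the periodicity of $(q,\tilde{s})$ granted by assertion (3) of Lemma \ref{qsregularity}. Evaluating \eqref{expqex} at $t = T$ gives $\Gamma(T)^N = \tilde{s}(T)/\tilde{s}_0 = 1$, hence $\int_0^T q(\tau)\, d\tau = 0$. The $T$-periodicity of $\Gamma$ on $[0,\infty)$ then follows immediately from
\[
\Gamma(t + T) = \exp\left( -\int_0^t q(\tau)\, d\tau - \int_t^{t+T} q(\tau)\, d\tau \right) = \Gamma(t),
\]
since the second integral equals $\int_0^T q(\tau)\, d\tau = 0$ by the $T$-periodicity of $q$. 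Uniform boundedness of $\Gamma$ is inherited from the closed form: $\tilde{s}$ lies on a bounded trajectory curve by assertion (2) of Lemma \ref{qsregularity}, and periodicity combined with strict positivity forces $\tilde{s}$ to stay bounded above and bounded below away from zero, yielding both a finite upper bound and a strictly positive lower bound for $\Gamma = (\tilde{s}/\tilde{s}_0)^{1/N}$.

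The lemma is essentially a one-line consequence of the decoupled structure of \eqref{qtildessys}; the only mild subtlety is justifying strict positivity of $\tilde{s}$ in order to take logarithms and $N$th roots, which is precisely what Corollary \ref{qsalltime} and the sign-preservation property of \eqref{tildesode} deliver. I do not anticipate any real obstacle here, since all the nontrivial work has already been completed in establishing the qualitative phase-plane picture of \eqref{qtildessys}.
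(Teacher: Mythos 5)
Your proof is correct and follows essentially the same route as the paper: both establish \eqref{expqex} by integrating the separable relation coming from $\tilde s' = -Nq\tilde s$ (you write $(\ln\tilde s)' = -Nq$ directly, the paper divides $\Gamma'=-q\Gamma$ by $\tilde s'=-Nq\tilde s$, which is the same calculation), and both then read off boundedness of $\Gamma$ from boundedness and strict positivity of $\tilde s$. The only cosmetic difference is in the periodicity step: you first deduce $\int_0^T q = 0$ by evaluating \eqref{expqex} at $t=T$ and then conclude $\Gamma(t+T)=\Gamma(t)$ using $T$-periodicity of $q$, whereas the paper first notes $\Gamma(t)=\Gamma(t+T)$ from periodicity of $\tilde s$ and then unpacks the integral identity to get $\int_0^T q = 0$; these are logically interchangeable.
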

\begin{proof}
From Corollary \ref{qsalltime}, $\tilde s(t),q(t)$ exist for all time,   and $\tilde s$ remains strictly  positive. Since $\Gamma' = -q \Gamma$, we may divide this by equation \eqref{tildesode} to obtain, 
$$
\frac{d\Gamma}{d\tilde s} = \frac{\Gamma}{N\tilde s}.
$$
Integrating yields \eqref{expqex}. The uniform boundedness of $e^{-\int_0^t q}$ follows immediately from the uniform boundedness of $\tilde s$. Moreover, since $\tilde s$ is periodic with period $T$, we have that for any $t\in\mathbb{R}$,
$$
e^{-\int_0^t q} = e^{-\int_0^{t+T} q}.
$$
Therefore,
\begin{align*}
& \int_0^{T} q =0. 
\end{align*}
\end{proof}

\section{Blow up of solutions}
\label{secblowup}
We now turn to the analysis of equations \eqref{rhoode} and \eqref{pode}. Our aim in this section is to entail conditions under which $p,\rho$ may blow up. From Corollary \ref{qsalltime}, we already know that $q,s$ are uniformly bounded, periodic. Hence, the quantities in \eqref{fullodesys} that can blowup are $p$ or $\rho$.

If $\rho_0=0$, then from \eqref{rhoode}, $\rho\equiv 0$ for as long as $\rho,p$ exist. Substituting this into  \eqref{pode} yields, 
$$
p'\leq -p^2-kc,
$$
which leads to Riccati-type blow up of $p$. Consequently, zero initial density density always leads to blow-up. From this point onward we therefore assume $\rho_0>0$. This in turn implies $\rho(t)>0$ for $t>0$ for as long as $\rho,p$ exist. 

With this in place, we proceed to simplify the system \eqref{rhoode}-\eqref{pode}, following the approach in \cite{Tan21}. From \eqref{rhoode}-\eqref{pode}, we obtain
\begin{align*}
\left(\frac{1}{\rho}\right)' & = q(N-1)\frac{1}{\rho} + \frac{p}{\rho},\\
\left(\frac{p}{\rho}\right)' & = -k(c+s(N-1))\frac{1}{\rho} + q(N-1)\frac{p}{\rho} + k.
\end{align*}
Observe that the coefficients of $1/\rho,p/\rho$ in their respective ODEs are the same, we can multiply both ODEs by integrating factor, 
$$
e^{-(N-1)\int_0^t q},
$$
to obtain,
\begin{align*}
& \left(\frac{1}{\rho}e^{-(N-1)\int_0^t q}\right)' = \frac{p}{\rho} e^{-(N-1)\int_0^t q},\\
& \left(\frac{p}{\rho}e^{-(N-1)\int_0^t q}\right)' = -k(c+s(N-1))\frac{1}{\rho}e^{-(N-1)\int_0^t q} + ke^{-(N-1)\int_0^t q}.
\end{align*}
Setting 
\begin{align} 
\label{etawtransf}
\eta:= \frac{1}{\rho}\Gamma^{N-1},\qquad w = \frac{p}{\rho}\Gamma^{N-1}, 
\end{align}
we obtain the new system,
\begin{subequations}
\label{simplestODE}
\begin{align}
& \eta' = w, \label{simplestODEeta}\\
& w' = -k\eta(c+s(N-1)) + k\Gamma^{N-1}. \label{simplestODEw}
\end{align}
\end{subequations}
We label the corresponding initial data as $\eta_0,w_0$.
\begin{remark}
\label{remlinsys}
Owing to Corollary \ref{qsalltime} and Lemma \ref{tildesexpq}, all coefficients in the linear ODE system \eqref{simplestODE} are uniformly bounded. Hence, $\eta(t),w(t)$ remain bounded and well-defined for all $t\in(-\infty,\infty)$. Consequently, the key to global existence of the original variables is to ensure 
$$
\eta(t)>0 \quad  \text{for all}\quad  t>0.
$$
From \eqref{etawtransf}, this would imply that $p,\rho$ are both bounded for all $t>0$. Conversely, if there is a finite time, $t_c$, at which $\eta$ becomes zero, then
$$
\lim_{t\to t_c^-} \rho(t) = \lim_{t\to t_c^-} \frac{1}{\eta (t)} = \infty.
$$
Moreover, since $w$ is bounded for all times,
$$
\lim_{t\to t_c^-} |p(t)| =  \lim_{t\to t_c^-}\frac{|w(t)|}{\eta(t)} = \infty.
$$
Therefore, at the moment of breakdown, $\rho,|p|$ blow up 
simultaneously.
\end{remark}
The above remark results in the following key proposition.
\begin{proposition}
\label{rhoptogether}
Suppose $\rho_0\neq 0.$ Then $\rho,p,q,s$ in \eqref{fullodesys} are well-defined for all $t>0$ if and only if 
$$
\eta(t)>0 \quad \text{for all}\quad  t>0
$$
in the system \eqref{simplestODE}. In particular, if there is a $t_c>0$ such that $\eta(t_c)=0$, then 
$$
\lim_{t\to t_c^-} |p(t)| =  \lim_{t\to t_c^-}\rho(t) = \infty.
$$
\end{proposition}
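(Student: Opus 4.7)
The proof is largely a consolidation of observations already gathered in Corollary \ref{qsalltime}, Lemma \ref{tildesexpq}, and Remark \ref{remlinsys}, organized into a clean iff statement together with a quantitative description of the blow-up. My plan proceeds in four steps.

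First, I would record the relevant properties of $\Gamma$. By Lemma \ref{tildesexpq}, $\Gamma(t)=(\tilde s(t)/\tilde s_0)^{1/N}$, and by Corollary \ref{qsalltime} together with Lemma \ref{qsregularity}, $\tilde s$ is strictly positive, periodic, and uniformly bounded. Hence there exist constants $0<\gamma_m\le \gamma_M<\infty$ with $\gamma_m\le \Gamma(t)\le\gamma_M$ for all $t\in\mathbb{R}$. In particular, $\Gamma^{N-1}$ never vanishes.

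Second, I would invoke the linear structure of \eqref{simplestODE}. Since $c+(N-1)s$ and $\Gamma^{N-1}$ are bounded continuous functions of $t$ (uniformly on $\mathbb{R}$), the system \eqref{simplestODE} is a linear system with bounded coefficients and bounded forcing. Thus for the specified initial data $(\eta_0,w_0)$ with $\eta_0=\Gamma_0^{N-1}/\rho_0=1/\rho_0>0$ and $w_0=p_0/\rho_0$, the solution $(\eta,w)$ exists and remains bounded on every finite interval, and in fact on $\mathbb{R}$.

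Third, I would establish the equivalence. If $\rho,p$ (together with $q,s$ from Corollary \ref{qsalltime}) are well-defined for all $t>0$ with $\rho>0$ (positivity is preserved by \eqref{rhoode} since $\rho_0>0$), then $\eta=\Gamma^{N-1}/\rho$ is well-defined and strictly positive. Conversely, if $\eta(t)>0$ for all $t>0$, then setting $\rho:=\Gamma^{N-1}/\eta$ and $p:=w/\eta$ yields bounded, smooth functions; a direct calculation using $\Gamma'=-q\Gamma$ shows these satisfy \eqref{rhoode} and \eqref{pode} with the correct initial data, so by uniqueness they agree with the original solution and prevent breakdown.

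Fourth, for the blow-up characterization, suppose $t_c>0$ is the first time at which $\eta(t_c)=0$. Since $\Gamma^{N-1}(t_c)\ge\gamma_m^{N-1}>0$, we immediately get $\rho(t)=\Gamma^{N-1}(t)/\eta(t)\to\infty$ as $t\to t_c^-$. For $|p|=|w|/\eta$, the only subtle case is $w(t_c)=0$, in which the ratio is $0/0$. Here I would use the ODE \eqref{simplestODEw} at $t_c$: $w'(t_c)=k\Gamma^{N-1}(t_c)\ge k\gamma_m^{N-1}>0$, so Taylor expanding about $t_c$ gives $w(t)=w'(t_c)(t-t_c)+O((t-t_c)^2)$ and, since $\eta'(t_c)=w(t_c)=0$ and $\eta''(t_c)=w'(t_c)>0$, $\eta(t)=\tfrac{1}{2}w'(t_c)(t-t_c)^2+O((t-t_c)^3)$. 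Therefore
\[
|p(t)|=\frac{|w(t)|}{\eta(t)}=\frac{|w'(t_c)|\,|t-t_c|+O((t-t_c)^2)}{\tfrac{1}{2}w'(t_c)(t-t_c)^2+O((t-t_c)^3)}\longrightarrow\infty.
\]
The only step requiring any care is this last case $w(t_c)=0$; every other piece is a routine consequence of the transformation \eqref{etawtransf} and the global bounds on $(q,s,\Gamma)$ established in Section \ref{secqs}.
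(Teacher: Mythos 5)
Your proof is correct and follows the same broad outline as the paper's treatment, which is recorded in Remark~\ref{remlinsys}: appeal to boundedness of the $q$--$s$ dynamics (Corollary~\ref{qsalltime}, Lemma~\ref{tildesexpq}) so that \eqref{simplestODE} is a linear system with uniformly bounded coefficients, hence $(\eta,w)$ exist globally and are bounded; then read off blow-up of $(\rho,p)$ from \eqref{etawtransf} when $\eta$ vanishes.

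The genuinely different content in your argument is the careful handling of $|p|\to\infty$ in the degenerate case $w(t_c)=0$. The paper's remark simply asserts ``since $w$ is bounded, $\lim|w|/\eta=\infty$,'' which as stated does not cover the $0/0$ possibility, and that possibility really does arise --- it is precisely the structure of the barrier functions $\eta_i$ in \eqref{eta1}, which satisfy $\eta_i(t_A^i)=\eta_i'(t_A^i)=0$ with $\eta_i''(t_A^i)=k\Gamma^{N-1}(t_A^i)>0$. Your Taylor argument, using $w'(t_c)=k\Gamma^{N-1}(t_c)\geq k\gamma_m^{N-1}>0$ when $\eta(t_c)=0$, correctly shows that $w$ vanishes to first order while $\eta$ vanishes to second order, so $|w|/\eta\sim 2/|t-t_c|\to\infty$. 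This is a legitimate tightening of the paper's reasoning rather than a different method. One small wording nit in your Step~3: $\rho=\Gamma^{N-1}/\eta$ need not be \emph{uniformly} bounded if $\eta(t)\to 0$ as $t\to\infty$; what you actually have (and all that is needed) is that it is finite and smooth on every compact time interval, which suffices to conclude that the ODE solution of \eqref{fullodesys} exists for all $t>0$.
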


Next, we introduce one of the key nonlinear quantities  of this paper,  which will be instrumental in analyzing the system \eqref{rhoode},\eqref{pode}. Define 
\begin{align}\label{Aexp}
A:= qw - k\eta s.
\end{align}
Using \eqref{qode},\eqref{sode} and the system \eqref{simplestODE},
\begin{align*}
A'& = q'w + qw' - k\eta' s - k\eta s'\\
& = (ks-q^2)w + q\left(-k\eta(c+s(N-1)) + k\Gamma^{N-1}\right) - kws + kq\eta ( c+Ns)\\
& = -q^2w + kq\eta s + kq\Gamma^{N-1}\\
& = -q(qw - k\eta s) + kq\Gamma^{N-1}\\
& = -qA + kq\Gamma^{N-1}. \numberthis \label{Aode}
\end{align*}
We have,
$$
A' + qA = -\left(\frac{k}{N-1}\Gamma^{N-1}\right)'.
$$
Throughout this subsection we assume $N> 2$. As evident in the calculations below, the $N=2$ case has to be handled separately and we will tackle it at the end of the section. Upon integration and using the notaiton 
$$
A_0 := q_0w_0 - k\eta_0s_0,
$$ 
we obtain,  
\begin{align*}
A\Gamma^{-1} -A_0 & = -\frac{k}{N-1}\int_0^t (\Gamma(s))^{-1} \left( \Gamma^{N-1}(s) \right)' ds\\
& = -\frac{k}{N-1}\left[\Gamma^{N-2} - 1 - \int_0^t q(s) (\Gamma(s))^{N-2} ds  \right].
\end{align*} 
Using $\Gamma'=-q\Gamma$, the integral simplifies further, giving  
\begin{align*} 
A\Gamma^{-1} -A_0 & = -\frac{k}{N-1}\left[\Gamma^{N-2} - 1 +\frac{1}{N-2}\left( \Gamma^{N-2} - 1 \right)  \right]\\
& = \frac{k}{N-2}\left[ 1 - \Gamma^{N-2} \right].
\end{align*}
Hence,
\begin{align}
\label{Aeq}
\begin{aligned}
& A(\Gamma) = \left( A_0 + \frac{k}{N-2} \right)\Gamma - \frac{k}{N-2}\Gamma^{N-1},\quad \Gamma(t) := e^{-\int_0^t q}>0,\\
& A(t) := A(\Gamma(t)). 
\end{aligned}
\end{align}
Here, we have slightly abused the notation: 
$A(t)$ denotes the time-dependent function, while  $A(\Gamma)$ denotes the function of the scalar variable $\Gamma$ as in \eqref{Aeq}. Likewise, $\Gamma$ denotes  the argument of the function $A(\Gamma)$ as well as a function itself as in \eqref{Gammaexp}, that takes positive values. The context will make the intended meaning clear. 
\begin{remark}
\label{remAexp}
Remarkably, $A(t)$ depends on $\eta,w$ only through the initial data, $\eta_0,w_0$. Thus the system consisting of \eqref{qode}, \eqref{sode}, \eqref{Aode} with initial data $q_0,s_0,A_0$ forms an initial value problem that evolves independent of the dynamics in \eqref{simplestODE}.
Moreover, $A(t)$ can be explicitly solved as above. By  Lemma \ref{tildesexpq}, $A(t)$ has the same period as $q,s$.
\end{remark}
We now  give a sufficient condition for the vanishing of $\eta$ in finite time.
\begin{proposition}
\label{AsignFTB}
If $A(t)$ is nonnegative (or nonpositive) for all $t$, then there exists $t_c>0$ such that $\eta(t_c) = 0$.
\end{proposition}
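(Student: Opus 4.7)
My plan is to exploit the defining identity \eqref{Aexp}, namely $A=qw-k\eta s$, at instants where $q$ vanishes. At any such $t_*$ the expression reduces to $A(t_*)=-k\,\eta(t_*)\,s(t_*)$, so the sign of $A$ at those moments is determined by the signs of $\eta$ and $s$ alone; combining this with Lemma \ref{qslemapp} should immediately trap $\eta$ on the wrong side of zero.

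The first step is to use Lemma \ref{qsregularity}, Corollary \ref{qsalltime}, and Lemma \ref{qslemapp} to record that $(q,s)$ traces a nontrivial closed periodic orbit, clockwise-traversed in the $(q,s)$-plane, which meets the $s$-axis at exactly two points $(0,s_{\min})$ and $(0,s_{\max})$ with
$$
s_{\min}\;<\;0\;<\;s_{\max}
$$
(I tacitly exclude the degenerate equilibrium $(q_0,s_0)=(0,0)$, as is done in the rest of the paper). In particular, within every period there exist times $t_1,t_2>0$ with $q(t_1)=0,\,s(t_1)=s_{\max}$ and $q(t_2)=0,\,s(t_2)=s_{\min}$.

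Assuming first that $A(t)\ge 0$ for all $t$, I will evaluate the identity at $t=t_1$ to obtain $0\le A(t_1)=-k s_{\max}\eta(t_1)$, which forces $\eta(t_1)\le 0$ since $k,s_{\max}>0$. On the other hand $\eta(0)=1/\rho_0>0$ (by the hypothesis $\rho_0>0$) and $\eta$ is continuous by Remark \ref{remlinsys}, so the Intermediate Value Theorem produces some $t_c\in(0,t_1]$ at which $\eta(t_c)=0$. I will then handle $A(t)\le 0$ symmetrically: evaluate at $t=t_2$, use $s_{\min}<0$ to get $\eta(t_2)\le 0$, and invoke IVT once more to extract $t_c\in(0,t_2]$ with $\eta(t_c)=0$.

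The main obstacle, and really the only conceptual content of the argument, is recognizing that the two $s$-axis crossings of the periodic orbit are exactly the right ``probing times'' at which the constant-sign hypothesis on $A$ converts, via the collapsed identity $A(t_*)=-k\eta(t_*)s(t_*)$, into a pointwise sign constraint on $\eta$. All the heavy lifting is stored in the geometric input $s_{\min}<0<s_{\max}$ from Lemma \ref{qslemapp}; once that is in hand, the proof reduces to two applications of the Intermediate Value Theorem.
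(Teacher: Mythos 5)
Your proof is correct and follows essentially the same route as the paper: both evaluate the collapsed identity $A(t_*)=-k\eta(t_*)s(t_*)$ at the time $t_*$ where $q$ vanishes (at $s_{\max}$ for the nonnegative case, $s_{\min}$ for the nonpositive case, using Lemma~\ref{qslemapp} to fix the signs of $s$ there), and then deduce $\eta(t_*)\le 0$ whence $\eta$ must cross zero. Your explicit mention of $\eta(0)=1/\rho_0>0$, the Intermediate Value Theorem, and the tacit exclusion of the degenerate equilibrium $(q_0,s_0)=(0,0)$ (where $A\equiv 0$ yet $\eta$ can stay positive) only makes the same argument slightly more careful than the paper's terse version.
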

\begin{proof}
Suppose $A(t)\geq 0$ for all $t$. Let $t_0$ be a time at which $s$ attains its maximum. From  equaiton \eqref{sode} we have $q(t_0)=0$. Therefore  
\begin{align*}
& 0\leq A(t_0) = q(t_0)w(t_0) -k\eta(t_0)s(t_0)  = -k\eta(t_0)s(t_0).
\end{align*}
From Lemma \ref{qslemapp}, $s(t_0)>0$. Hence, $\eta(t_0)\leq 0$. Since $\eta$ is continuous and
$\eta(0)=\eta_0>0$ (by assumption), there must exist a time
$$
0<t_c \leq t_0
$$
such that $\eta(t_c)=0$. 

The proof is similar for the case $A(t)\leq 0$ for all $t$. 
\end{proof}

\begin{remark}
\label{remetaendpoints}
At the times where $s$ achieves its extrema, the value of $\eta$ can be determined a priori;  it depends only on $A,q$ and $s$. In fact, a simple computation gives $\eta$ explicitly at these times. Indeed, if $s$ achieves a maximum or minimum  at $t=t_\ast$, then $q(t_\ast)=0$. Hence, by \eqref{Aexp},
$$
\eta(t_\ast) = -\frac{A(t_\ast)}{ks(t_\ast)}.
$$
Since $A,s$ are periodic with the same period $T$, it follows that 
$$
\eta(t_\ast) = \eta(t_\ast+lT), l=0,1,2,\ldots.
$$
These coordinates are marked in Figure \ref{figlemeq}.
\end{remark}

\begin{lemma}
\label{Asign}
Suppose $N\geq 3$. Then:
\begin{itemize}
    \item If $1+\frac{A_0(N-2)}{k}\leq 0$, then $A(t)<0$ for all $t>0$.
    \item If 
    $\int_0^t q \leq -\frac{1}{N-2}\ln\left( 1+\frac{A_0(N-2)}{k} \right)$ for all $t>0$, then $A(t)\leq 0$ for all $t>0$.
    \item If 
    $\int_0^t q \geq -\frac{1}{N-2}\ln\left( 1+\frac{A_0(N-2)}{k} \right)$ for all $t>0$, then $A(t)\geq 0$ for all $t>0$.
\end{itemize}
\end{lemma}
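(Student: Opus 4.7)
The plan is to work directly from the explicit formula
$$
A(\Gamma) = \Bigl(A_0 + \tfrac{k}{N-2}\Bigr)\Gamma - \tfrac{k}{N-2}\Gamma^{N-1}
$$
in \eqref{Aeq} and translate sign conditions on $A$ into sign conditions on $\Gamma$, and then into integral conditions on $q$ via $\Gamma(t)=e^{-\int_0^t q}$.

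First, I would factor out $\Gamma>0$ to write
$$
A(\Gamma) = \Gamma\Bigl[\Bigl(A_0 + \tfrac{k}{N-2}\Bigr) - \tfrac{k}{N-2}\Gamma^{N-2}\Bigr],
$$
so that the sign of $A$ is completely determined by the bracketed factor. This immediately settles the first bullet: if $1+\tfrac{A_0(N-2)}{k}\leq 0$, i.e. $A_0+\tfrac{k}{N-2}\leq 0$, then the bracket is strictly negative for every $\Gamma>0$ (since $N\geq 3$ makes $\tfrac{k}{N-2}\Gamma^{N-2}>0$), giving $A(t)<0$ for all $t>0$.

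For the remaining two bullets, suppose $A_0+\tfrac{k}{N-2}>0$. Then the bracket vanishes at the unique positive value
$$
\kappa := \Bigl(1+\tfrac{A_0(N-2)}{k}\Bigr)^{\frac{1}{N-2}},
$$
is positive on $(0,\kappa)$ and negative on $(\kappa,\infty)$. Thus $A(t)\geq 0$ iff $\Gamma(t)\leq \kappa$, and $A(t)\leq 0$ iff $\Gamma(t)\geq \kappa$. Taking logarithms in $\Gamma(t)=e^{-\int_0^t q}$, the inequality $\Gamma(t)\leq \kappa$ becomes $\int_0^t q\geq -\tfrac{1}{N-2}\ln\!\bigl(1+\tfrac{A_0(N-2)}{k}\bigr)$, and the reverse inequality is $\Gamma(t)\geq \kappa$. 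Substituting these equivalences gives exactly the second and third bullets.

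I do not expect a genuine obstacle here: everything reduces to recognizing the factorization of $A(\Gamma)$ and monotonicity of $\Gamma\mapsto \Gamma^{N-2}$. The only small point to be careful about is the edge case $A_0+\tfrac{k}{N-2}=0$ in bullet one, where one must observe that the remaining term $-\tfrac{k}{N-2}\Gamma^{N-1}$ is strictly negative for $\Gamma>0$, so the strict inequality $A(t)<0$ still holds. After that, bullets two and three are pure rewriting via the identity $\Gamma(t)=e^{-\int_0^t q(\tau)d\tau}$ from \eqref{Gammaexp}.
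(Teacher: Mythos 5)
Your proposal is correct and follows essentially the same route as the paper: identify $0$ and $\kappa=\bigl(1+\tfrac{A_0(N-2)}{k}\bigr)^{1/(N-2)}$ as the only positive-axis sign changes of $A(\Gamma)$, then translate $\Gamma(t)\lessgtr\kappa$ into the stated integral conditions on $q$ via $\Gamma=e^{-\int_0^t q}$. Your explicit factorization $A(\Gamma)=\Gamma\bigl[(A_0+\tfrac{k}{N-2})-\tfrac{k}{N-2}\Gamma^{N-2}\bigr]$ and the careful treatment of the borderline case $A_0+\tfrac{k}{N-2}=0$ are clean ways of organizing the same argument the paper gives somewhat more tersely.
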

\begin{proof}
The first assertion follows from the fact that if $1+\frac{A_0(N-2)}{k}\leq 0$, then from \eqref{Aeq}, $A(t)<0$ for all time.

Note that if $1+\frac{A_0(N-2)}{k}>0$, then it can be readily seen from \eqref{Aeq} that $A(\Gamma)$ has exactly two real roots, $0$ and 
\begin{align}
\label{kappaexp}
\kappa:= \left( 1+ \frac{A_0(N-2)}{k}\right)^{\frac{1}{N-2}}>0.
\end{align} 
Moreover, for nonnegative arguments $\Gamma$ in \eqref{Aeq}, $A(\Gamma)> 0$ for $\Gamma\in (0,\kappa)$ and $A(\Gamma)<0$ for $\Gamma>\kappa$. 

Now suppose the hypothesis of the second assertion holds. Straightforward calculations then imply $\Gamma(t)\geq \kappa$ for all $t$. Therefore, $A(\Gamma)\leq 0$ for all attainable values of $\Gamma$. This proves the second assertion. 

The third assertion is similar, only that here the hypothesis implies $\Gamma(t) \leq \kappa$. Therefore, $A(t)\geq 0$.
\end{proof}

\begin{corollary}
\label{ftbresult1}
Suppose $N\geq 3$. If one of the following is true,
\begin{itemize}
    \item $1+\frac{A_0(N-2)}{k}\leq 0$,
    \item $1+\frac{A_0(N-2)}{k}> 0$, and the two roots, $y_2>y_1>0$, of the equation,
    \begin{align}
    \label{tildesboundeq}
    \frac{2ky}{N-2} + \frac{kc}{N} - R_Ny^{2/N} = 0,\qquad (\text{equivalently } R_N(0,y) = R_N),
    \end{align}
    are such that,
    $$
    \kappa\notin \left(\left( \frac{y_1}{\tilde s_0} \right)^{1/N},\left(\frac{y_2}{\tilde s_0}\right)^{1/N}\right),
    $$
\end{itemize}
then there is a time $t_c>0$ such that $\eta(t_c)=0$. Here, $R_N$ is the constant in \eqref{Rconstant} and $\kappa$ is as in \eqref{kappaexp}.
\end{corollary}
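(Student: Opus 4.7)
The plan is to reduce the corollary to a straightforward combination of Lemma \ref{Asign} and Proposition \ref{AsignFTB}, with Lemma \ref{qslemapp} and Lemma \ref{tildesexpq} supplying the needed identification between the algebraic roots $y_1,y_2$ and the extreme values of $\Gamma$.

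First, for the first bullet ($1+\tfrac{A_0(N-2)}{k}\leq 0$), there is nothing to do beyond quoting the first assertion of Lemma \ref{Asign}: $A(t)<0$ for all $t>0$. Then Proposition \ref{AsignFTB} immediately produces a $t_c>0$ with $\eta(t_c)=0$.

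For the second bullet, I would translate the hypothesis on $\kappa$ into a statement about $\Gamma$. By Lemma \ref{tildesexpq},
\begin{equation*}
\Gamma(t)=\bigl(\tilde s(t)/\tilde s_0\bigr)^{1/N},
\end{equation*}
so $\Gamma$ oscillates precisely between $(\tilde s_{\min}/\tilde s_0)^{1/N}$ and $(\tilde s_{\max}/\tilde s_0)^{1/N}$. By Lemma \ref{qslemapp}, the two positive solutions of $R_N(0,y)=R_N$ are exactly $\tilde s_{\min}=y_1$ and $\tilde s_{\max}=y_2$; note that \eqref{tildesboundeq} is simply the expanded form of $R_N(0,y)=R_N$. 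Therefore $\kappa\notin\bigl((y_1/\tilde s_0)^{1/N},(y_2/\tilde s_0)^{1/N}\bigr)$ means one of the two alternatives holds: either $\kappa\geq (y_2/\tilde s_0)^{1/N}=\Gamma_{\max}$, in which case $\Gamma(t)\leq\kappa$ for all $t$, or $\kappa\leq (y_1/\tilde s_0)^{1/N}=\Gamma_{\min}$, in which case $\Gamma(t)\geq\kappa$ for all $t$. Taking logs, these are exactly the hypotheses of the third and second bullets of Lemma \ref{Asign} respectively, so $A(t)$ retains a fixed sign throughout its evolution. One more application of Proposition \ref{AsignFTB} then gives a finite $t_c>0$ with $\eta(t_c)=0$, completing the proof.

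There is no real obstacle here; the only subtlety worth pointing out is the book-keeping that lets us identify the algebraic roots $y_1,y_2$ of \eqref{tildesboundeq} with the geometric turning points $\tilde s_{\min},\tilde s_{\max}$ of the $q$–$\tilde s$ orbit. Once that identification is made via Lemma \ref{qslemapp}, the rest is a direct quotation.
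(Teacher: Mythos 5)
Your proof is correct and follows essentially the same route as the paper: the first bullet quotes Lemma \ref{Asign} (first assertion) and Proposition \ref{AsignFTB}; the second uses Lemma \ref{qslemapp} to identify $y_1,y_2$ with $\tilde s_{\min},\tilde s_{\max}$, Lemma \ref{tildesexpq} to convert the $\kappa$-exclusion into the sign of $\int_0^t q + \ln\kappa$, and then the remaining assertions of Lemma \ref{Asign} followed by Proposition \ref{AsignFTB}. The case matching (first alternative to the third bullet, second alternative to the second bullet) is correct.
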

\begin{remark}
All conditions in the hypothesis depend only on the initial data. Consequently,  they provide a characterization of the supercritical region for \eqref{mainsys}.
\end{remark}
\begin{proof}
If the first condition holds, then by the first assertion of Lemma \ref{Asign} we have $A(t)<0$ for all $t>0$. Proposition \ref{AsignFTB} then leads to the conclusion.

As noted earlier, equation  \eqref{tildesboundeq} is equivalent to $$
R_N(0,y) = R_N(q_0,\tilde s_0).
$$
Hence, by Lemma \ref{qslemapp}, the two roots of \eqref{tildesboundeq} correspond exactly to the maximum and minimum attainable values of $\tilde s$.
Now suppose the hypothesis of the second assertion holds. There are two possible situations:
\begin{enumerate} 
\item  $\kappa\leq (y_1/\tilde s_0)^{1/N}\leq (\tilde s(t)/\tilde s_0)^{1/N}$ or,
\item $\kappa\geq (y_2/\tilde s_0)^{1/N}\geq (\tilde s(t)/\tilde s_0)^{1/N}$,
\end{enumerate}
for all $t>0$. Suppose $(1)$ holds.
Using Lemma \ref{tildesexpq},
$$
e^{-\int_0^t q} = \left(\frac{\tilde s(t)}{\tilde s_0}\right)^{\frac{1}{N}} \geq \kappa.
$$
Hence, $\int_0^t q \leq -\ln(\kappa)$ for all $t>0$. Using the second assertion of Lemma \ref{Asign}, we obtain that $A(t)\leq 0$ for all time. Then by Proposition \ref{AsignFTB}, we conclude the result. 

Case (2) is handled in exactly the same way, with the inequalities reversed. 
\end{proof}

We conclude this section by treating the case $N=2$. Integrating \eqref{Aode} with $N=2$ yields
\begin{align}
\label{A2eq}
\begin{aligned}
& B(\Gamma) :=  (A_0 - k\ln\Gamma )\Gamma,\\
& B(t) := B(\Gamma(t)),\quad \Gamma(t) = e^{-\int_0^t q}.
\end{aligned}
\end{align}
We continue to denote the initial value of $B$ by the same symbol $A_0$ for consistency. Proposition \ref{AsignFTB} remains valid with $B$ in place of $A$. Analogous to Lemma \ref{Asign}, we obtain: 
\begin{lemma}
\label{A2sign}
Suppose $N=2$. Then: 
\begin{itemize}
    \item If 
    $\int_0^t q \leq -\frac{A_0}{k}$ for all $t>0$, then $B(t)\leq 0$ for all $t>0$.
    \item If 
    $\int_0^t q \geq -\frac{A_0}{k}$ for all $t>0$, then $B(t)\geq 0$ for all $t>0$.
\end{itemize}
\end{lemma}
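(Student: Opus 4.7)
The plan is to mirror the argument of Lemma \ref{Asign} almost verbatim, substituting the logarithmic expression $B(\Gamma) = (A_0 - k\ln\Gamma)\Gamma$ for the polynomial $A(\Gamma)$. The key preliminary step is a sign analysis of $B$ as a function of $\Gamma \in (0,\infty)$: because $\Gamma > 0$ always, the sign of $B(\Gamma)$ coincides with that of $A_0 - k\ln\Gamma$, which vanishes at the unique positive root
\[
\kappa := e^{A_0/k},
\]
is strictly positive on $(0,\kappa)$ and strictly negative on $(\kappa,\infty)$. Note that, in contrast to the $N\geq 3$ case, this root exists unconditionally, so the counterpart of the first assertion of Lemma \ref{Asign} (where $A(\Gamma)$ has no positive root) is vacuous here.

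Next I would translate each hypothesis on $\int_0^t q$ into an inequality on $\Gamma(t) = e^{-\int_0^t q}$. For the first assertion, $\int_0^t q \leq -A_0/k$ is equivalent to $-\int_0^t q \geq A_0/k$, and exponentiating gives $\Gamma(t) \geq \kappa$ for all $t > 0$; combined with the sign analysis above this yields $B(t) = B(\Gamma(t)) \leq 0$. The second assertion follows by the symmetric manipulation: $\int_0^t q \geq -A_0/k$ gives $\Gamma(t) \leq \kappa$, whence $B(t) \geq 0$.

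There is essentially no obstacle in this argument; the $N = 2$ case is structurally simpler than $N\geq 3$ because the integration that produced \eqref{A2eq} left $B$ as a product of $\Gamma$ with a single monotone function of $\ln\Gamma$, so the threshold analysis reduces to a one-line exponential inequality. The only place where one has to be mildly careful is to recall, from Lemma \ref{tildesexpq}, that $\Gamma(t)$ is strictly positive and globally defined, so the expression $B(\Gamma(t))$ is well-defined for all $t$ and the translation between $\int_0^t q$ and $\Gamma(t)$ is valid throughout.
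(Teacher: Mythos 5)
Your argument is correct and follows exactly the route the paper intends (the paper's own proof is an explicit "very similar to Lemma~\ref{Asign}" reference): identify the unique positive root $\kappa = e^{A_0/k}$ of $B(\Gamma)$, note $B>0$ on $(0,\kappa)$ and $B<0$ on $(\kappa,\infty)$, then exponentiate the hypothesis on $\int_0^t q$ to place $\Gamma(t)$ on the appropriate side of $\kappa$. Your observation that, unlike the $N\geq 3$ case, the root $\kappa$ exists unconditionally (so there is no analogue of the first bullet of Lemma~\ref{Asign}) correctly explains why the $N=2$ lemma has only two cases.
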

The proof is very similar to that of Lemma \ref{Asign} and fairly straightforward given the simplicity of \eqref{A2eq}.
\begin{remark}
Interestingly, the result of Lemma \ref{A2sign} can be related to that of Lemma \ref{Asign} by replacing,
$$
-\frac{1}{N-2}\ln\left( 1+\frac{A_0(N-2)}{k}\right),
$$
with
$$
-\lim_{\beta\to 2^+}\frac{1}{\beta-2}\ln\left( 1+\frac{A_0(\beta-2)}{k}\right).
$$
This shows that $N=2$ is indeed a critical case.
\end{remark}
Finally, we have the result analogous to Corollary \ref{ftbresult1}.
\begin{corollary}
\label{ftbresult2}
Suppose $N=2$. Let $y_2>y_1>0$ be the two roots  of 
    \begin{align}
    \label{tildesboundeq2}
    ky\ln (y) + \frac{kc}{2} - R_2y = 0,
    \end{align}
    where $R_2$ is the constant appearing on the right-hand side of \eqref{Rconstant}. If 
    $$
    e^{\frac{A_0}{k}}\notin \left(\sqrt{\frac{y_1}{\tilde s_0}},\sqrt{\frac{y_2}{\tilde s_0}}\right),
    $$
then there is a finite time $t_c>0$ such that $\eta(t_c)=0$. 
\end{corollary}
As before, the proof is very similar to the arguments used above and will therefore be omitted.

Before we begin the new section, we  introduce some notation. We use $$
t_q,t_s,t_A
$$
to denote times at which  $q,s,A$ vanish,  respectively. Since these functions are periodic, there are infinitely many such times. When we need to distinguish among multiple instances, we write 
$$
t_q^i,t_s^i,t_A^i,  \text{for} \quad  i\in\mathbb{N}
$$
to label the $i$-th occurrence of such times.
\section{Global Solution}
\label{secglobal}
Up to this point, we have narrowed down the class of initial data for \eqref{fullodesys} that may lead to $\eta(t)>0$, or equivalently, to the boundedness of $\rho(t)$ for all $t>0$. The goal of this section is to identify  precisely which initial configurations -- among those not already excluded in Section \ref{secblowup} -- ensure that  $\eta$ remains positive over one full period of the $q-s$ system. 

Therefore, from now on, we assume the negation of the hypothesis in Corollary \ref{ftbresult1}. This allows us to focus on the remaining, unresolved class of initial data.

For $N>2$, we assume 
\begin{align}
\label{gsseccond}
1+\frac{A_0(N-2)}{k}> 0, \qquad\kappa\in \left(\left( \frac{\tilde s_{min}}{\tilde s_0} \right)^{1/N},\left(\frac{\tilde s_{max}}{\tilde s_0}\right)^{1/N}\right),
\end{align}
with $\kappa$ as given in \eqref{kappaexp},  and $\tilde s_{min},\tilde s_{max}$ are the minimum and maximum attainable values of $\tilde s(t)$, which are also the two roots of \eqref{tildesboundeq}. 

For the case $N=2$, we set $\kappa= e^{\frac{A_0}{k}}$,  and $\tilde s_{min},\tilde s_{max}$ are the corresponding minimum and maximum attainable values of $\tilde s(t)$, given by the roots of \eqref{tildesboundeq2}. The first assumption in \eqref{gsseccond} becomes a vacuous statement when $N=2$. Under this notational convention,  the analysis and results presented in this section for $N>2$  is the same as those for $N=2$ with $A$ replaced by $B$. Hence,  we do not distinguish between the cases $N>2$ and $N=2$ throughout this section. At the end of this section, we will prove the all- time periodicity of $\eta$ in the 4D case, assuming a suitable periodicity condition. 

Owing to the firs assumption in \eqref{gsseccond}, $A(\Gamma)$ has a unique positive root, which is precisely $\kappa$. 
Moreover, from \eqref{expqex}, the second assumption in \eqref{gsseccond} implies that
$$
\kappa\in\{\Gamma(t): t>0\}^\circ,
$$
where $^\circ$ denotes the interior of the set. Since $\kappa$ is a root of $A(\Gamma)$, it follows from \eqref{Aeq} that $A(t)$ must change sign. 
Note that this is essentially the negation of the hypothesis of Proposition \ref{AsignFTB}; therefore, $A$ must necessarily change sign to hope for a situation wherein $\eta(t)>0$ for all $t>0$. See Figure \ref{figAgamma} for a visualization of this phenomenon.

\begin{figure}[h!] 
\centering
\begin{tikzpicture}
	
	\definecolor{darkgrey176}{RGB}{176,176,176}
	\definecolor{darkviolet1910191}{RGB}{191,0,191}
	\definecolor{gold}{RGB}{255,215,0}
	\definecolor{lightgrey204}{RGB}{204,204,204}
	\definecolor{limegreen}{RGB}{50,205,50}
	
	\begin{axis}[
		legend cell align={left},
		legend style={
			font=\tiny,
			fill opacity=0.8,
			draw opacity=1,
			text opacity=1,
			at={(0.03,0.03)},
			anchor=south west,
			draw=lightgrey204
		},
		axis line style={line width=0.1pt},
		tick align=outside,
		tick pos=left,
		x grid style={darkgrey176},
		xlabel={\(\displaystyle \Gamma\)},
		xmin=-0.0733989401765789, xmax=1.54137774370816,
		xtick style={color=black},
		ylabel style={rotate=-90.0},
		ylabel={\(\displaystyle A\)},
		ytick=\empty,
		y label style={at={(axis description cs:0.1,0.7)}, anchor=west},
		ymin=-0.4007344675, ymax=0.4073474775
		]
		\addplot [line width=1pt, black]
		table {%
			0 0
			0.01 0.00633333
			0.02 0.0126666133333333
			0.03 0.01899973
			0.04 0.02533248
			0.05 0.0316645833333333
			0.06 0.03799568
			0.07 0.04432533
			0.08 0.0506530133333333
			0.09 0.05697813
			0.1 0.0633
			0.11 0.0696178633333333
			0.12 0.07593088
			0.13 0.08223813
			0.14 0.0885386133333333
			0.15 0.09483125
			0.16 0.10111488
			0.17 0.107388263333333
			0.18 0.11365008
			0.19 0.11989893
			0.2 0.126133333333333
			0.21 0.13235173
			0.22 0.13855248
			0.23 0.144733863333333
			0.24 0.15089408
			0.25 0.15703125
			0.26 0.163143413333333
			0.27 0.16922853
			0.28 0.17528448
			0.29 0.181309063333333
			0.3 0.1873
			0.31 0.19325493
			0.32 0.199171413333333
			0.33 0.20504693
			0.34 0.21087888
			0.35 0.216664583333333
			0.36 0.22240128
			0.37 0.22808613
			0.38 0.233716213333333
			0.39 0.23928853
			0.4 0.2448
			0.41 0.250247463333333
			0.42 0.25562768
			0.43 0.26093733
			0.44 0.266173013333333
			0.45 0.27133125
			0.46 0.27640848
			0.47 0.281401063333333
			0.48 0.28630528
			0.49 0.29111733
			0.5 0.295833333333333
			0.51 0.30044933
			0.52 0.30496128
			0.53 0.309365063333333
			0.54 0.31365648
			0.55 0.31783125
			0.56 0.321885013333333
			0.57 0.32581333
			0.58 0.32961168
			0.59 0.333275463333333
			0.6 0.3368
			0.61 0.34018053
			0.62 0.343412213333333
			0.63 0.34649013
			0.64 0.34940928
			0.65 0.352164583333333
			0.66 0.35475088
			0.67 0.35716293
			0.68 0.359395413333333
			0.69 0.36144293
			0.7 0.3633
			0.71 0.364961063333333
			0.72 0.36642048
			0.73 0.36767253
			0.74 0.368711413333333
			0.75 0.36953125
			0.76 0.37012608
			0.77 0.370489863333333
			0.78 0.37061648
			0.79 0.37049973
			0.8 0.370133333333333
			0.81 0.36951093
			0.82 0.36862608
			0.83 0.367472263333333
			0.84 0.36604288
			0.85 0.36433125
			0.86 0.362330613333333
			0.87 0.36003413
			0.88 0.35743488
			0.89 0.354525863333333
			0.9 0.3513
			0.91 0.34775013
			0.92 0.343869013333333
			0.93 0.33964933
			0.94 0.33508368
			0.95 0.330164583333333
			0.96 0.32488448
			0.97 0.31923573
			0.98 0.313210613333333
			0.99 0.30680133
			1 0.3
			1.01 0.292798663333333
			1.02 0.28518928
			1.03 0.27716373
			1.04 0.268713813333333
			1.05 0.25983125
			1.06 0.25050768
			1.07 0.240734663333333
			1.08 0.23050368
			1.09 0.21980613
			1.1 0.208633333333333
			1.11 0.19697653
			1.12 0.18482688
			1.13 0.172175463333333
			1.14 0.15901328
			1.15 0.14533125
			1.16 0.131120213333333
			1.17 0.11637093
			1.18 0.10107408
			1.19 0.0852202633333333
			1.2 0.0688
			1.21 0.05180373
			1.22 0.0342218133333334
			1.23 0.0160445300000001
			1.24 -0.00273791999999984
			1.25 -0.0221354166666666
			1.26 -0.0421579200000001
			1.27 -0.0628154700000001
			1.28 -0.0841181866666667
			1.29 -0.10607627
			1.3 -0.1287
			1.31 -0.151999736666667
			1.32 -0.17598592
			1.33 -0.20066907
			1.34 -0.226059786666667
			1.35 -0.25216875
			1.36 -0.27900672
			1.37 -0.306584536666667
			1.38 -0.33491312
			1.39 -0.36400347
		};
		\addlegendentry{\tiny{ The polynomial in \eqref{Aeq} } }
		\addplot [ultra thick, limegreen]
		table {%
			0.87650909935689 0
			1.46797880353158 0
		};
		\addlegendentry{\tiny{ Range of \(\displaystyle \Gamma\)} }
		\addplot [semithick, gold, mark=asterisk, mark size=3, mark options={solid}, only marks]
		table {%
			1.23856232963017 0
		};
		\addlegendentry{\tiny{ \(\displaystyle \kappa\)} }
		\addplot [semithick, black, mark=*, mark size=2.1, mark options={solid}, only marks]
		table {%
			1 0
		};
		\addlegendentry{\tiny{ \(\displaystyle \Gamma=1 (t=0)\)} }
		\addplot [semithick, blue, mark=square*, mark size=2.1, mark options={solid}, only marks]
		table {%
			0.87650909935689 0
		};
		\addlegendentry{\tiny{ corresponding to \(\displaystyle s_{min}\)} }
		\addplot [semithick, darkviolet1910191, mark=diamond*, mark size=2.5, mark options={solid}, only marks]
		table {%
			1.46797880353158 0
		};
		\addlegendentry{\tiny{ corresponding to \(\displaystyle s_{max}\)} }
		\addplot [line width=0.24pt, black, forget plot]
		table {%
			-0.0733989401765789 0
			1.54137774370816 0
		};
		\addplot [line width=0.24pt, black, forget plot]
		table {%
			0 -0.4007344675
			0 0.4073474775
		};
	\end{axis}
	
\end{tikzpicture}
\caption{$A$ vs $\Gamma$ with $N=5, k=1, c=1, q_0 = 0.2, s_0 = -0.1$. }
\label{figAgamma}
\end{figure}

From the expression for $\Gamma(t)$ in \eqref{Aeq},  the leftmost point on the green line in Figure \ref{figAgamma} corresponds to the time when $\tilde s$ attains its minimum and the rightmost point is when it attains the maximum. Combining this observation with \eqref{expqex} and \eqref{Aeq}, we obtain an important conclusion that will be used throughout this section. For any $t>0$,
\begin{align}
\label{Aextremesign}
\begin{aligned}
& \tilde s(t) = \tilde s_{max} \implies A(t)<0,\\
& \tilde s(t) = \tilde s_{min} \implies A(t)>0.
\end{aligned}
\end{align}
In much of this section, we will be analyzing the system \eqref{simplestODE}. Our goal is to obtain a necessary and sufficient condition to ensure $\eta(t)>0$ for all $t>0$. Recall from Remark \ref{remetaendpoints} that at the times when $s$ (or equivalently $\tilde s$) attains extrema values, the corresponding values of $\eta$ are known a priori. Specifically, if $t_q$ is such that $q(t_q)=0$, then from \eqref{Aexp}, 
\begin{align}
\label{etaatq0}
\eta(t_q) = -\frac{A(t_q)}{ks(t_q)}.
\end{align}
In particular, when $s(t_q) = s_{min}$,  we have 
$$
\eta(t_q) = -\frac{A(t_q)}{ks_{min}}.
$$
From Lemma \ref{qslemapp},  we have $s_{min}<0$. Using this in \eqref{Aextremesign}, we obtain $A(t_q)>0$. From \eqref{etaatq0}, this implies $\eta(t_q) >0$. An entirely analogous argument shows that when $s$ achieves maximum, $s$ and $A$ switch signs in such a way that  $\eta$ is again positive.

Keeping this observation in mind, our remaining task is to ensure that $\eta$ remains strictly positive at other times apart from those  where $s$ attains its extremal values. To this end, we construct two auxiliary functions that form an envelope (or a cloud) around $\eta$. This will enable us to establish the desired  positivity of $\eta$.

We proceed as follows. Suppose $q_0,s_0,A_0$ are given. These determine the functions $q,s,A$, which are unknowns of a closed ODE system, whose properties have already been analyzed. The choice of $A_0$ also fixes a linear relation between $\eta_0$ and $w_0$. Given these three functions, our aim is to derive a condition involving exactly one of $\eta_0$ or $w_0$. Either of these conditions will ensure that  $\eta(t)$ remains positive. Once such a condition is imposed on $\eta_0$ (or $w_0$), the corresponding constraint on $w_0$ (or $\eta_0$) through $A_0$ follows automatically from  \eqref{Aexp}. For example, if 
$$
\delta_1<\eta_0<\delta_2,
$$
then, using \eqref{Aexp} with the fixed $q_0,s_0,A_0$, we obtain  
$$
\delta_1 < \frac{q_0w_0-A_0}{ks_0}<\delta_2,
$$
which gives the subsequent bounds on $w_0$ as well.
Thus, our task reduces to determining the appropriate conditions.

From Sections \ref{secqs} and \ref{secblowup}, we know that once the initial data $q_0,s_0,A_0$ is given, the  functions $s,q,A$ are completely determined, periodic, and uniformly bounded. Then \eqref{simplestODE} is an inhomogeneous, linear $2\times 2$ system with uniformly bounded coefficients, and therefore solutions $\eta,w$ exist for all $t\in(-\infty,\infty)$. We now state and prove several lemmas.
\begin{lemma}
\label{lemequivalence}
Consider the first order linear ODE,
\begin{align}
\label{firstordereta}
& \tilde\eta' = \frac{A+k\tilde\eta s}{q}.
\end{align}
Let $\mathbb{D}=\{t\in(-\infty,\infty):q(t)=0\}$. For all intervals $I$ with $I\subset\mathbb{D}^c$, suppose $\tilde\eta_1$ and $\tilde\eta_2$ satisfy \eqref{firstordereta} and $\tilde\eta_1(t_\ast)\neq\tilde\eta_2(t_\ast)$ for some $t_\ast\in \mathbb{D}^c$. Then following statements hold,
\begin{enumerate}
    \item $\tilde\eta_1,\tilde\eta_2$ satisfy
    \begin{align}
    \label{etasecondorder}
    & \tilde\eta''+ k\tilde\eta(c+(N-1)s) = k\Gamma^{N-1},
    \end{align}
    for all $t\in(-\infty,\infty)$,
    \item $\tilde\eta_1(t) = \tilde\eta_2(t) = -\frac{A(t)}{ks(t)}>0$ for all $t\in\mathbb{D}$,
    \item $\tilde\eta_1(t)\neq \tilde\eta_2(t)$ for all $t\in\mathbb{D}^c$,
    \item $\tilde\eta'_1(t) \neq \tilde\eta_2'(t)$ for all $t\in\mathbb{D}$. In particular, $\tilde\eta_1 - \tilde\eta_2$ changes sign at only and all $t\in\mathbb{D}$.
\end{enumerate}
\end{lemma}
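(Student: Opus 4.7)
The plan is to pass between the singular first-order ODE \eqref{firstordereta} and a globally regular second-order linear ODE for $\tilde\eta$, and then exploit uniqueness of the latter IVP to deduce all four assertions. On an interval $I\subset\mathbb{D}^c$ set $F:=q\tilde\eta'-ks\tilde\eta-A$, so that \eqref{firstordereta} reads $F\equiv 0$. Differentiate $F$ and substitute $q'$, $s'$, $A'$ from \eqref{qode}, \eqref{sode}, \eqref{Aode}; after further substituting the candidate $\tilde\eta''=k\Gamma^{N-1}-k\tilde\eta(c+(N-1)s)$ from \eqref{etasecondorder}, a direct calculation collapses to the key identity
\[F'=-qF.\]
Hence $F\equiv 0$ on $I$ is preserved under any extension. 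The second-order ODE \eqref{etasecondorder} has smooth and globally bounded coefficients by Corollary \ref{qsalltime} and Lemma \ref{tildesexpq}, so $\tilde\eta$ extends uniquely to all $t\in\mathbb{R}$, and $F\equiv 0$ on $\mathbb{R}$ recovers \eqref{firstordereta} on every component of $\mathbb{D}^c$.

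\textbf{Assertion (2).} At any $t_q\in\mathbb{D}$, $q(t_q)=0$ reduces $F(t_q)=0$ to $\tilde\eta(t_q)=-A(t_q)/(ks(t_q))$, a value common to $\tilde\eta_1$ and $\tilde\eta_2$. Lemma \ref{qslemapp} gives $s(t_q)\in\{s_{min},s_{max}\}$ with $s_{min}<0<s_{max}$, and \eqref{Aextremesign} shows $A(t_q)$ and $s(t_q)$ have opposite signs in both cases, so $\tilde\eta(t_q)>0$.

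\textbf{Assertions (4) and (3).} Set $\psi:=\tilde\eta_1-\tilde\eta_2$. By (1), $\psi$ solves the homogeneous second-order linear ODE $\psi''+k\psi(c+(N-1)s)=0$; by (2), $\psi(t_q)=0$ for every $t_q\in\mathbb{D}$. Since $\psi(t_\ast)\neq 0$, if $\psi'(t_q)=0$ at some $t_q$, then uniqueness of the homogeneous IVP at $t_q$ (with zero data) would force $\psi\equiv 0$, a contradiction. Thus $\psi'(t_q)\neq 0$ at every $t_q\in\mathbb{D}$, proving (4), and the simple zero of $\psi$ at each $t_q$ is precisely a sign change. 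For (3), on each component $I\subset\mathbb{D}^c$, $\psi$ satisfies $\psi'=(ks/q)\psi$, whose integral form $\psi(t)=\psi(t_0)\exp(\int_{t_0}^t (ks/q)\,d\tau)$ shows $\psi$ is nowhere zero on $I$ once nonzero at one point. The nonvanishing of $\psi'$ at each endpoint $t_q$ of $I$ places $\psi$ nonzero in a deleted neighborhood of $t_q$, hence nonzero just inside the adjacent component, and the integral form then extends this to the whole component; hence $\psi\neq 0$ throughout $\mathbb{D}^c$.

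\textbf{Main obstacle.} The only delicate point is reconciling the singular first-order ODE on $\mathbb{D}^c$ with the regular second-order extension across $\mathbb{D}$: the identity $F'=-qF$ is precisely the bridge ensuring that the regular extension does not introduce spurious solutions violating \eqref{firstordereta}. Once this identity is in place, the four assertions reduce to routine applications of uniqueness for linear IVPs.
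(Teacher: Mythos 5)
Your proposal is correct in substance and follows essentially the same route as the paper: differentiate \eqref{firstordereta}, use the ODEs for $q,s,A$ to pass to the second-order equation \eqref{etasecondorder} with globally bounded coefficients, and then exploit uniqueness of the associated IVP to obtain assertions (2)--(4). Two small remarks on the differences. First, in assertion (1) you introduce $F:=q\tilde\eta'-ks\tilde\eta-A$, which is a nice way to bookkeep the first-order constraint, but the way you state the computation runs slightly backwards: you substitute the ``candidate'' $\tilde\eta''$ from \eqref{etasecondorder} to derive $F'=-qF$. What is actually needed for assertion (1) is the forward direction, and it is worth stating explicitly: the differentiation yields the identity $F'+qF = q\bigl(\tilde\eta'' + k(c+(N-1)s)\tilde\eta - k\Gamma^{N-1}\bigr)$; on any $I\subset\mathbb{D}^c$ one has $F\equiv 0$, hence $F'\equiv 0$, and dividing by $q\neq 0$ forces \eqref{etasecondorder} on $I$. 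That is precisely the paper's derivation, and your algebra is equivalent; only the logical direction should be made explicit to avoid the appearance of assuming the conclusion. Second, you prove (4) before (3), and then deduce (3) from (4) via the exponential integral formula $\psi(t)=\psi(t_0)\exp\bigl(\int_{t_0}^t ks/q\,d\tau\bigr)$ on each component, combined with the simple-zero structure at each $t_q\in\mathbb{D}$. The paper instead proves (3) directly by a contradiction using uniqueness of the second-order IVP at a hypothetical interior coincidence point $\tau_\ast$ (noting \eqref{firstordereta} forces the derivatives to agree there as well), and then uses (2) and (3) to obtain (4). Both orderings are valid; the paper's route for (3) is a touch shorter, while yours keeps the entire argument phrased at the level of the scalar difference $\psi$ and its homogeneous first-order ODE. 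The proof of (2) matches the paper essentially verbatim.
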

\begin{proof}
Taking derivative of \eqref{firstordereta}, and using \eqref{qode}, \eqref{sode}, \eqref{Aode} results in \eqref{etasecondorder}, which is linear with bounded coefficients. Note that since $\mathbb{D}$ is discrete, $\tilde\eta_1,\tilde\eta_2$ satisfy \eqref{etasecondorder} for all $t$ by continuity. Hence, the first statement holds. 

Consequently, $\tilde\eta_1,\tilde\eta_2$ are well-defined for all $t$. Therefore, the limit in \eqref{firstordereta},
$$
\lim_{t\to t_q} \tilde\eta_i'(t),\quad t_q\in\mathbb{D},\ i=1,2,
$$
must exist. Hence, 
$$
A(t_q) + k\tilde\eta_i(t_q)s(t_q) = 0, \quad t_q\in\mathbb{D}.
$$
The argument showing that $\tilde\eta_i(t_q)>0$ is identical to the one used in the paragraph following \eqref{etaatq0}. This completes the proof of the second assertion.

To prove the third assertion, we proceed by contradiction. Suppose that 
$$
\tilde\eta_1(\tau_\ast) = \tilde\eta_2(\tau_\ast)
\quad \text{for some}\quad  \tau_\ast\in\mathbb{D}^c.
$$
Note that $\tilde\eta_1,\tilde\eta_2$ both satisfy \eqref{firstordereta} as well as \eqref{etasecondorder}. Consider the initial value problem  (IVP) \eqref{etasecondorder}  with initial data 
$$
\tilde\eta(\tau_\ast) = \tilde\eta_1(\tau_\ast), \quad \tilde\eta'(\tau_\ast) = \frac{A(\tau_\ast) + k\tilde\eta_1(\tau_\ast)s(\tau_\ast)}{q(\tau_\ast)}.
$$
This IVP has a unique solution; hence $\tilde\eta_1\equiv\tilde\eta_2$. However, this is a contradiction since $\tilde\eta_1(t_\ast)\neq \tilde\eta_2(t_\ast)$. Hence, the third assertion follows. 

For $t_q\in\mathbb{D}$, the inequality $\tilde\eta_1'(t_q)\neq\tilde\eta_2'(t_q)$ follows from the second assertion and an ODE uniqueness argument to the one as above. Consequently, if 
$$
\tilde\eta_1(t)-\tilde\eta_2(t)<0 \quad \text{for}\quad  t\in(t_q-\epsilon,t_q)
$$
for  sufficiently small $\epsilon$, then from the second and third assertions, we must have 
$$
\tilde\eta_1'(t_q)-\tilde\eta_2'(t_q)> 0.
$$
Indeed, since 
\begin{align*}
& -\tilde\eta_1(t)>-\tilde\eta_2(t),\quad t\in(t_q-\epsilon,t_q),
\end{align*}
we obtain 
\begin{align*}
& \frac{\tilde\eta_1(t_q)-\tilde\eta_1(t)}{t_q-t}>\frac{\tilde\eta_2(t_q)-\tilde\eta_2(t)}{t_q-t},\quad \text{by Assertion } 2,
\end{align*}
which gives 
\begin{align*}
& \tilde\eta_1'(t_q)\geq \tilde\eta_2'(t_q),
\end{align*}
and strict inequality follows from Assertion 3.
Hence, $\tilde\eta_1-\tilde\eta_2$ changes sign at $t_q$. This completes the proof.
\end{proof}
Figure \ref{figlemeq} below gives an illustration of Lemma \ref{lemequivalence}.

\begin{figure}[h!] 
\centering
\input{TikzCode/Fig-lemUnique.tex}
\caption{Visualization of Lemma \ref{lemequivalence} with $k=1, c=1, q_0=0.1, s_0=-0.15,A_0=0.2$. The top figure has $N=5$ (generic solutions not periodic) and the bottom with $N=4$ (periodic solutions). }
\label{figlemeq}
\end{figure}

\begin{lemma}
\label{lemeta0}
Suppose that $\tilde\eta$ satisfies \eqref{firstordereta} on the intervals specified in Lemma \ref{lemequivalence} and assume $\tilde\eta(0)>0$. Let $t_c>0$ be the first time (if it exists) such that $\tilde\eta(t_c)=0$. Let 
$$J:=(t_q^1,t_q^2)$$ 
be the smallest interval such that $t_c\in J$ and $q(t_q^i)=0$, $i=1,2$ ($t_1^q$ may be negative). Let $t_A\in J$ be the unique time when $A(t_A)=0$. Then necessarily  
$$
t_c\in (t_q^1,t_A]\quad \text{and}\quad  \tilde\eta(t)<0 \quad \text{for}\quad  t\in(t_c,t_A].
$$
In particular, if $\tilde\eta(t_A)>0$, then such a time $t_c$ does not exist, and $\tilde\eta(t)>0$ for all $t\in [t_q^1,t_q^2]$. Moreover, if $\tilde\eta(t_A)=0$, then 
$$
\tilde\eta(t)>0 \quad \text{for all}\quad  t\in [t_q^1,t_q^2]\backslash\{t_A\}.
$$
\end{lemma}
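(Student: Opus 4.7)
The plan is to pin down the sign behavior of $A$ and $q$ on the interval $J$, and then apply the ODE \eqref{firstordereta} at every zero of $\tilde\eta$. First, Lemma \ref{lemequivalence}(2) guarantees $\tilde\eta(t_q^i)>0$ at both endpoints $t_q^1, t_q^2$, so any first zero $t_c$ must lie in the open interior of $J$. Next, since $q$ is non-vanishing on $J$ and $\tilde s' = -Nq\tilde s$ with $\tilde s>0$, $\tilde s$ is strictly monotone on $J$ and joins one of $\tilde s_{min}$ or $\tilde s_{max}$ at $t_q^1$ to the other at $t_q^2$. By \eqref{Aextremesign}, $A$ has opposite signs at the endpoints, and since $t_A$ is its unique zero on $J$, a short case check (depending on whether $\tilde s$ increases or decreases across $J$) shows that in both scenarios $A/q < 0$ on $(t_q^1, t_A)$ and $A/q > 0$ on $(t_A, t_q^2)$. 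At any zero $t_0 \in J$ of $\tilde\eta$, \eqref{firstordereta} collapses to $\tilde\eta'(t_0) = A(t_0)/q(t_0)$; for the \emph{first} zero $t_c$, continuity forces $\tilde\eta'(t_c) \le 0$, which rules out $t_c \in (t_A, t_q^2)$ and yields $t_c \in (t_q^1, t_A]$.

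If $t_c < t_A$, then $\tilde\eta'(t_c) < 0$ strictly and $\tilde\eta$ dips into negative values. Any return to zero at some $t_0 \in (t_c, t_A)$ would again satisfy $\tilde\eta'(t_0) = A(t_0)/q(t_0) < 0$, which is inconsistent with $\tilde\eta$ approaching $0$ from below. Hence $\tilde\eta<0$ on $(t_c, t_A)$. The step I expect to be the main obstacle is ruling out $\tilde\eta(t_A)=0$ in this case, because there the first-order identity degenerates to $\tilde\eta'(t_A)=A(t_A)/q(t_A)=0$ and is inconclusive. I would therefore escalate to the equivalent second-order equation \eqref{etasecondorder} from Lemma \ref{lemequivalence}(1): if $\tilde\eta(t_A)=0$ and $\tilde\eta'(t_A)=0$, then $\tilde\eta''(t_A) = k\Gamma^{N-1}(t_A) > 0$, making $t_A$ a strict local minimum with value zero. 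This forces $\tilde\eta > 0$ in a punctured neighborhood of $t_A$, contradicting $\tilde\eta < 0$ on $(t_c, t_A)$. Hence $\tilde\eta(t_A)<0$, completing the first assertion.

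The ``in particular'' statements then follow by contrapositive. If $\tilde\eta(t_A)>0$, no first zero $t_c$ can exist, for otherwise the previous paragraph would force $\tilde\eta(t_A) \le 0$; hence $\tilde\eta$ is zero-free and remains positive throughout $[t_q^1, t_q^2]$. If instead $\tilde\eta(t_A)=0$, the only option is $t_c = t_A$, and $\tilde\eta'(t_A)=0$ together with $\tilde\eta''(t_A)=k\Gamma^{N-1}(t_A)>0$ shows that $t_A$ is an isolated zero with $\tilde\eta>0$ on a punctured neighborhood; the same $A/q > 0$ sign argument applied on $(t_A, t_q^2)$ then excludes any further zero, yielding $\tilde\eta > 0$ on $[t_q^1, t_q^2] \setminus \{t_A\}$.
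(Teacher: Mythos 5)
Your proof is correct and reaches all three conclusions, but it travels by a slightly different route than the paper. The paper multiplies \eqref{firstordereta} by the integrating factor $e^{-k\int^t\frac{s}{q}}$ to obtain $\bigl(\tilde\eta\,e^{-k\int^t\frac{s}{q}}\bigr)' = \frac{A}{q}\,e^{-k\int^t\frac{s}{q}}$, and then reads everything off from the fact that this weighted version of $\tilde\eta$ is strictly decreasing on $(t_q^1,t_A)$ and strictly increasing on $(t_A,t_q^2)$: the location of the first zero, the negativity of $\tilde\eta$ on $(t_c,t_A]$, and the positivity after $t_A$ all follow from monotonicity of a single scalar quantity. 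You instead argue pointwise: at any zero $t_0$ of $\tilde\eta$ the ODE collapses to $\tilde\eta'(t_0)=A(t_0)/q(t_0)$, and the sign pattern of $A/q$ across $t_A$ (negative before, positive after — which you correctly verify is the same in both the $\tilde s$-increasing and $\tilde s$-decreasing cases) rules out a first zero in $(t_A,t_q^2)$ and rules out any return-to-zero inside $(t_c,t_A)$. Your version is a bit more elementary — no integrating factor — at the cost of tracking signs at each candidate zero rather than invoking global monotonicity once; the paper's version packages the same sign information into a single monotone scalar, which makes the final ``in particular'' assertions slightly more immediate. Both proofs escalate to the second-order equation \eqref{etasecondorder} in exactly the same way to resolve the degenerate case $\tilde\eta(t_A)=0$, $\tilde\eta'(t_A)=0$ via $\tilde\eta''(t_A)=k\Gamma^{N-1}(t_A)>0$, so the key idea at the delicate point is identical.
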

Note that, by \eqref{tildesode} and \eqref{Aextremesign}, $A$ and $q$ cannot vanish simultaneously.  Moreover, from \eqref{expqex} and \eqref{Aeq}, there exists a unique time 
$$
t_A\in (t_q^1,t_q^2)
$$
such that $A(t_A)=0$. Therefore, $t_A\in J$ exists and is unique. It is possible that $t_A \leq 0$. In that case, $\tilde\eta(t)>0$ for 
$$
t\in[0,t_q^2]\subset [t_A,t_q^2].
$$
\begin{proof}
From Lemma \ref{lemequivalence}, we have $\tilde\eta(t_q^1)>0$. Hence, for some $\epsilon>0$ small enough such that 
$$
t_q^1+\epsilon < t_A,
$$
it follows that $\tilde\eta(t)>0$ for all $t\in[t_q^1,t_q^1+\epsilon]$. The fucntion $\tilde\eta$ satisfies \eqref{firstordereta} in the interval $[t_q^1+\epsilon,t_q^2)$. Multiplying \eqref{firstordereta} by the appropriate integrating factor, we obtain,
$$
\left( \tilde\eta e^{-k\int^t\frac{s}{q}} \right)' = \frac{A}{q} e^{-k\int^t\frac{s}{q}}.
$$
From the direction of flow (clockwise in Figure \ref{figqsphase}), we deduce that $A(t)$ and $q(t)$ have opposite signs in the interval $[t_q^1+\epsilon, t_A)$ and the same sign in $(t_A,t_q^2)$. Indeed, if $t_q^1$ is such that $s(t_q^1) = s_{max}$, then by \eqref{expqex} and \eqref{Aeq}, we have $A(t)<0$ for $t\in[t_q^1+\epsilon, t_A)$ and strictly positive on $(t_A,t_q^2)$. By assertion three of Lemma \ref{qsregularity}, $q(t)>0$ in $J$ and hence, signs of $A(t),q(t)$ are opposite. The same conclusion holds when $s(t_q^1)=s_{min}$.

Therefore, the quantity 
$$
\tilde\eta e^{-k\int^t\frac{s}{q}}
$$
is decreasing in $[t_q^1+\epsilon, t_A)$ and increasing in the interval $(t_A,t_q^2)$. Since 
$$
\left.\tilde\eta e^{-k\int^t\frac{s}{q}}\right|_{t_q^1+\epsilon}>0,
$$
we conclude that 
$$
t_c\in (t_q^1+\epsilon,t_A]\subset (t_q^1,t_A],
$$
and $\tilde\eta(t)<0
$
for $t\in(t_c,t_A]$ follows directly from the ODE above.

If $\tilde\eta (t_A)>0$, then no such $t_c$ exists: otherwise, the argument above would force  $\tilde\eta(t_A)<0$,  which is a contradiction. Hence, $\tilde\eta (t)>0$ for all $t\in(t_q^1,t_q^2)$ and $\tilde\eta$ is positive at $t_q^1,t_q^2$ by Lemma \ref{lemequivalence}. 

Finally, if $t_c=t_A$, then \eqref{firstordereta}  yields $\tilde\eta'(t_A)=0$,  and \eqref{etasecondorder} implies  $$
\tilde\eta''(t_A)>0.
$$
Hence, $t_c=t_A$ is a strict local minimum, and $\tilde\eta$ is positive in a neighborhood of $t_c$. However, for $t>t_A$, we know $A,q$  have same sign,  and the ODE above implies that $\tilde\eta>0$ after $t_A$.
\end{proof}

At this point, a direct  calculation shows that 
\begin{align}
\label{eq:equilsoln}
& \tilde\eta_P := (N\Gamma \tilde s_0)^{-1},
\end{align}
is a strictly positive particular solution of \eqref{etasecondorder}. hus, setting
$$
\hat\eta:= \tilde\eta - \tilde\eta_P, 
$$
we reduce \eqref{etasecondorder} to the homogenous system
\begin{align}
\label{eq:homogen}
\hat\eta '' + k\hat\eta (c+s(N-1)) = 0.
\end{align}
Equation \eqref{eq:homogen} is a linear second order ODE with periodic coefficients. Such equations fall under the scope of  Floquet Theory \cite{YS71}, which addresses general linear ODEs with periodic coefficients. In our situation, the equation enjoys additional structure: \eqref{eq:homogen} is a particular example of the Hill's differential equation \cite{E73}.
A fundamental set of linearly independent solutions of such an equation must take one of the following forms: 
\begin{enumerate}
\item $\{ e^{mt} p_1(t), e^{-mt} p_2(t) \}$ OR  
\item $\{ e^{imt} p_1(t), e^{-imt} p_2(t) \}$ OR 
\item $\left\{ e^{\left(m+ \frac{i\pi }{T} \right)t} p_1(t), e^{\left(-m + \frac{i\pi }{T} \right)t } p_2(t) \right\}$ OR
\item $\{ p_1(t), t p_1(t) + p_2(t) \}$, 
\end{enumerate}
for some $m\geq 0$, where $p_1,p_2$ are $T$-periodic functions. 

In the recent work of \cite{CS23}, the authors show that for $N\neq 4$, \eqref{eq:homogen} has a nontrivial unbounded solution, implying that the subcritical region has zero measure in the initial phase space. Consequently, for $N\neq 4$, case (2) above cannot occur.
Nevertheless, one can still determine a set of initial data that guarantees positivity of  $\eta$ on the interval  $[0,T)$, which will be essential for establishing our final result. 

We now define two auxiliary  functions via an IVP using the linear differential equation \eqref{etasecondorder}, using carefully chosen initial conditions. These functions will form a “cloud’’ around the solution $\eta$. 

Let $t_A^i, i=1,2$,  with $0\leq t_A^1<t_A^2$,  be the first two times when 
$$
A(t_A^i)=0.
$$ 
Define the two functions $\eta_i, i=1,2$ as follows,
\begin{align}
\label{eta1}
\begin{aligned}
& \eta_i'' + k\eta_i(c+(N-1)s) = k \Gamma^{N-1},
\end{aligned}
\end{align}
subject to the initial conditions 
$$
\eta_i (t_A^i)=0,\qquad \eta_i'(t_A^i) = 0.
$$

\begin{figure}[ht!] 
\centering
\input{TikzCode/Fig-Lem_distinct.tex}
\caption{Left figure: when $\eta_1\equiv\eta_2$. Right figure: when $\eta_1,\eta_2$ are distinct. }
\label{figpropdistinct}
\end{figure}

\begin{proposition}
\label{propetaipos}
For each $i=1,2$,
$$
\eta_i(t)>0,\quad t\in [0,T]\backslash\{t_A^i\}.
$$
In particular, $\eta_i(0)>0$ and the two functions
$\eta_q(t)$ and $\eta_2(t)$ are distinct.
\end{proposition}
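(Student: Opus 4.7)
The plan is to establish strict positivity of each $\eta_i$ off $t_A^i$ in three passes, exploiting a hidden first-order compatibility of $\eta_i$ with \eqref{firstordereta}, the integrating-factor argument already developed in Lemma~\ref{lemeta0}, and the $T$-periodicity from Proposition~\ref{etaiperiod}.

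First, I would verify that each $\eta_i$ satisfies not only the defining second-order IVP~\eqref{eta1} but also the first-order equation~\eqref{firstordereta}. Setting $\psi_i := q\eta_i' - k\eta_i s - A$, a direct computation using \eqref{qode}, \eqref{sode}, \eqref{Aode}, and \eqref{eta1} yields $\psi_i' = -q\psi_i$; the initial data $\eta_i(t_A^i) = \eta_i'(t_A^i) = A(t_A^i) = 0$ forces $\psi_i(t_A^i) = 0$ and hence $\psi_i \equiv 0$. Consequently all assertions of Lemma~\ref{lemequivalence} apply to $\eta_i$; in particular $\eta_i(t_q) = -A(t_q)/(k s(t_q))$ at every zero $t_q$ of $q$, which is strictly positive by \eqref{Aextremesign} combined with $s_{\min} < 0 < s_{\max}$ from Lemma~\ref{qslemapp}.

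Second, let $t_q^a < t_A^i < t_q^b$ denote the two consecutive zeros of $q$ flanking $t_A^i$ and set $J_i := (t_q^a,t_q^b)$. The IVP~\eqref{eta1} gives $\eta_i''(t_A^i) = k\Gamma^{N-1}(t_A^i) > 0$ together with $\eta_i(t_A^i) = \eta_i'(t_A^i) = 0$, so Taylor expansion forces $\eta_i > 0$ on a punctured neighborhood of $t_A^i$. To extend to all of $J_i \setminus \{t_A^i\}$, I would use that $t_A^i$ is the unique zero of $A$ in $J_i$, so that $A/q$ has constant sign on each of $(t_q^a, t_A^i)$ and $(t_A^i, t_q^b)$ (determined by the structure around \eqref{Aextremesign}). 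Writing \eqref{firstordereta} as $\left(\eta_i e^{-k\int^t s/q}\right)' = (A/q)\, e^{-k\int^t s/q}$ and integrating outward from $t_A^i$ (where $\eta_i e^{-k\int^t s/q} = 0$) then pins down strict positivity of $\eta_i$ throughout $J_i \setminus \{t_A^i\}$, exactly as in Lemma~\ref{lemeta0}. Boundary positivity at $t_q^a, t_q^b$ is already given by step one, and the same routine gives $\eta_i(0) > 0$.

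Third, the complementary interval is $J' := (t_q^b, t_q^a + T)$, which contains $t_A^j$ for the other index $j \neq i$; by step~one and periodicity, $\eta_i$ is strictly positive at both endpoints of $J'$. If $\eta_i$ vanished at some $t_* \in J'$, then \eqref{firstordereta} together with the fact that $t_A^j$ is the unique zero of $A$ in $J'$ would force any $t_* \neq t_A^j$ to be a transverse simple zero of $\eta_i$, while $t_* = t_A^j$ would give $\eta_i'(t_A^j) = 0$ and hence, by uniqueness for \eqref{eta1} applied at $t_A^j$, $\eta_i \equiv \eta_j$. Applying Lemma~\ref{lemeta0} forward in time from $t_q^b$ and symmetrically in reverse time from $t_q^a + T$ would constrain any zero of $\eta_i$ inside $J'$ to coincide with $t_A^j$, so the whole question reduces to ruling out $\eta_i \equiv \eta_j$. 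For this last step I would combine the variation-of-parameters representation \eqref{eta1formula} for $\eta_1$ with the analogous formula anchored at $t_A^2$ for $\eta_2$: the difference is
\begin{equation*}
\eta_1(t) - \eta_2(t) = f_1(t)\int_{t_A^1}^{t_A^2} g_1(\tau)\,d\tau + f_2(t)\int_{t_A^1}^{t_A^2} g_2(\tau)\,d\tau,
\end{equation*}
and linear independence of the periodic Floquet solutions $f_1, f_2$ (whose Wronskian is nonvanishing) together with the explicit form $g_l = \pm f_{3-l}\, k\Gamma^{N-1}/\det \bar F$ should prevent the two integrals from vanishing simultaneously. The distinctness conclusion then follows as an immediate corollary of $\eta_1(t_A^2) \neq 0 = \eta_2(t_A^2)$. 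The main obstacle I anticipate is precisely this last distinctness argument: closing it rigorously amounts to a careful sign-tracking of $g_1, g_2$ across the single zero of $q$ that lies between $t_A^1$ and $t_A^2$, showing that the two integrals over $(t_A^1,t_A^2)$ cannot vanish at the same time.
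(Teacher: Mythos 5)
Your Steps 1 and 2 are sound: the computation $\psi_i' = -q\psi_i$ with $\psi_i(t_A^i)=0$ correctly shows each $\eta_i$ solves \eqref{firstordereta}, and the integrating-factor argument on $J_i$ (where the minimum of $\eta_i e^{-k\int^t s/q}$ sits exactly at $t_A^i$ with value $0$) gives strict positivity on $J_i\setminus\{t_A^i\}$. The gap is in Step 3, and it is a genuine one, not merely the distinctness loose end you flag at the end.

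The claim that Lemma~\ref{lemeta0} applied forward from $t_q^b$ and backward from $t_q^a+T$ ``constrains any zero of $\eta_i$ inside $J'$ to coincide with $t_A^j$'' is false. What the forward application gives is that the \emph{first} zero lies in $(t_q^b,t_A^j]$; the time-reversed application gives that the \emph{last} zero lies in $[t_A^j,t_q^a+T)$. These two statements are entirely compatible with $\eta_i$ having a strictly negative minimum at $t_A^j$ and hence exactly two transverse zeros in $J'$, one on each side of $t_A^j$; Lemma~\ref{lemeta0} does not rule this out. So the problem does not ``reduce to ruling out $\eta_i\equiv\eta_j$.'' Moreover, even if you had distinctness in hand, it still would not finish the argument: distinctness plus assertion~4 of Lemma~\ref{lemequivalence} tells you one of $\eta_1,\eta_2$ lies strictly above the other on $J_1$ and they swap on $J'$, but it does not tell you which ordering occurs. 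If the bad ordering holds (say $\eta_1>\eta_2$ on $J_1$), then $\eta_2(t_A^1)<\eta_1(t_A^1)=0$ and, by the swap, $\eta_1(t_A^2)<\eta_2(t_A^2)=0$, which is precisely the two-transverse-zero configuration you failed to exclude. Nothing in your three passes rules out this ordering.

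The paper closes exactly this gap by an entirely different mechanism: it first exhibits an explicit strictly positive solution $g(t)=\tilde s^{-1/N}/\bigl(N\tilde s_0^{1-1/N}\bigr)$ of \eqref{etasecondorder}, which corresponds to the particular initial value $A_0=(q_0^2-ks_0)/(N\tilde s_0)$, and then runs a connectedness argument on the set $\mathcal{C}$ of admissible $A_0$ for which a strictly positive solution of \eqref{firstordereta} exists (nonempty by the explicit $g$, open by continuous dependence, closed by a dichotomy based on Lemmas~\ref{lemequivalence} and~\ref{lemeta0}). Once a strictly positive solution $\tilde\eta$ is known to exist for the given $A_0$, assertions~3 and~4 of Lemma~\ref{lemequivalence} force $\tilde\eta$ to lie strictly between $\eta_1$ and $\eta_2$ on each $J$-interval, so $\eta_1(t_A^2)\ge\tilde\eta(t_A^2)>0$ and $\eta_2(t_A^1)\ge\tilde\eta(t_A^1)>0$, ruling out both the tangency and the crossing configurations at once. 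Your Floquet-formula argument, even if you could make the sign-tracking of the integrals of $g_1,g_2$ rigorous, would only deliver distinctness, which by itself is insufficient; the key missing ingredient is the existence of a strictly positive comparison solution.
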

\begin{proof}
We first claim that for any  $A_0$ satisfying condition \eqref{gsseccond}, there exists a solution to \eqref{etasecondorder},  different from  the particular solution \eqref{eq:equilsoln}, that is strictly positive on $[0,T]$. This is straightforward from continuous dependence of solutions in linear ODEs. 

We now establish the proposition. By Lemma  \ref{lemeta0}, 
$$
\eta_1(t)>0\quad \text{for}\quad  t\in [t_q^1,t_q^2]\backslash\{t_A^1\}.
$$
Hence, it can only be zero, if at all,  in the interval $(t_q^2,t_q^3)$. If $\eta_1$ merely touches zero in $(t_q^2,t_q^3)$, then by \eqref{firstordereta} that zero must occur  at $t_A^2$. In that case, \eqref{etasecondorder} together with  uniqueness of solutions to ODEs would force $$
\eta_1\equiv\eta_2,
$$
contradicting the fact that their initial conditions differ.

If, on the other hand,  $\eta_1$ crosses zero in $(t_q^2,t_q^3)$, then Lemma \ref{lemeta0} implies that  $t_A^2$ must lies strictly in between the two roots of $\eta_1$. 
A completely analogous argument holds for  $\eta_2$.
Both possible configurations are illustrated in Figure \ref{figpropdistinct}.

In either case, Lemma \ref{lemequivalence} implies that there can never be a strictly positive solution. However, \eqref{eq:equilsoln} is a strictly positive solution. Hence, none of these two cases are possible and it must be that for each $i=1,2$ $\eta_i$ is positive everywhere on $[0,T]$ except at $t_A^i$, as claimed. 
This finishes the proof.
\end{proof}

The key results are as follows,
\begin{proposition}
\label{etacover1}
Suppose $q_0\neq 0$. If
$$
\min\{\eta_1(0),\eta_2(0)\}< \eta_0 < \max\{\eta_1(0),\eta_2(0)\},
$$ 
then, 
$$
\min\{\eta_1(t),\eta_2(t)\}< \eta(t) < \max\{\eta_1(t),\eta_2(t)\},\quad t\in\mathbb{D}^c.
$$ 
Conversely, if 
$$
\eta_0\notin \left(\min\{\eta_1(0),\eta_2(0)\}, \max\{\eta_1(0),\eta_2(0)\} \right),
$$ 
then
$$
\eta(t)\notin  \left(\min\{\eta_1(t),\eta_2(t)\}, \max\{\eta_1(t),\eta_2(t)\} \right),\quad t\in\mathbb{D}^c.
$$
Here, $\mathbb{D}$ is the same as in the statement of Lemma \ref{lemequivalence}.
\end{proposition}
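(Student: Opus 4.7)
The plan is to exhibit $\eta$ explicitly as an affine combination of $\eta_1$ and $\eta_2$, after which the proposition collapses to an algebraic triviality.

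Since $q_0\neq 0$, we have $0\in\mathbb{D}^c$, so assertion~3 of Lemma~\ref{lemequivalence} gives $\eta_1(0)\neq\eta_2(0)$. There is then a unique $\lambda\in\mathbb{R}$ with $\eta_0=\lambda\eta_1(0)+(1-\lambda)\eta_2(0)$, and $\eta_0$ lies strictly between (respectively, strictly outside) the two values $\eta_i(0)$ if and only if $\lambda\in(0,1)$ (respectively, $\lambda\notin[0,1]$).

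The key step is to upgrade this to the global identity $\eta(t)\equiv\lambda\eta_1(t)+(1-\lambda)\eta_2(t)$. Both sides satisfy the inhomogeneous linear equation \eqref{etasecondorder}---affine combinations of solutions whose coefficients sum to $1$ are again solutions---and they agree at $t=0$ by construction. For derivative matching at $t=0$, observe that each of $\eta,\eta_1,\eta_2$ obeys the first--order relation \eqref{firstordereta} at $t=0$, so
\[
\eta'(0)=\frac{A_0+ks_0\eta_0}{q_0},\qquad \eta_i'(0)=\frac{A_0+ks_0\eta_i(0)}{q_0},\quad i=1,2,
\]
and linearity in the initial value yields $\eta'(0)=\lambda\eta_1'(0)+(1-\lambda)\eta_2'(0)$. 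Uniqueness for the second--order IVP then closes the identity. The one point to verify is that the relation $q_0\eta_i'(0)-ks_0\eta_i(0)=A_0$ really does hold for $\eta_i$: this is because the quantity $q\eta_i'-k\eta_i s$ and the function $A$ both satisfy the scalar linear ODE \eqref{Aode} and agree at $t=t_A^i$ (where $\eta_i(t_A^i)=\eta_i'(t_A^i)=0=A(t_A^i)$), hence coincide for all $t$.

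With the identity in hand, for $t\in\mathbb{D}^c$ Lemma~\ref{lemequivalence}(3) gives $\eta_1(t)\neq\eta_2(t)$, and
\[
(\eta-\eta_1)(\eta-\eta_2)=-\lambda(1-\lambda)(\eta_2-\eta_1)^2.
\]
Thus $\eta(t)$ is strictly between $\eta_1(t)$ and $\eta_2(t)$ if and only if $\lambda(1-\lambda)>0$, i.e.\ $\lambda\in(0,1)$, and lies outside the open interval $(\min\{\eta_1(t),\eta_2(t)\},\max\{\eta_1(t),\eta_2(t)\})$ if and only if $\lambda\notin(0,1)$. Combined with the characterization of $\lambda$ from the second paragraph, this gives both directions of the proposition. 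There is no serious obstacle; the only delicate moment is the derivative--matching, which relies on remembering that the constraint \eqref{Aexp} pins $\eta_i'(0)$ and $w_0$ to $\eta_i(0)$ and $\eta_0$ through the same linear formula in $A_0$.
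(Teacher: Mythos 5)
Your proposal is correct, and it takes a genuinely different route from the paper. The paper invokes Lemma~\ref{lemequivalence} wholesale: all three of $\eta,\eta_1,\eta_2$ pairwise satisfy \eqref{firstordereta}, so by assertions~2--4 they coincide exactly at $\mathbb{D}$ (taking the common value $-A/(ks)$), stay apart on $\mathbb{D}^c$, and their pairwise differences flip sign only at $\mathbb{D}$; the relative ordering of the three is therefore preserved on every component of $\mathbb{D}^c$. Your argument instead builds the explicit affine identity $\eta\equiv\lambda\eta_1+(1-\lambda)\eta_2$: you verify that this affine combination satisfies the same second-order IVP \eqref{etasecondorder} at $t=0$ by checking derivative matching through \eqref{firstordereta} (which you justify correctly via the uniqueness argument for $q\eta_i'-k\eta_i s$ and $A$ as solutions of \eqref{Aode}, agreeing at $t_A^i$), and then the proposition collapses to the algebraic fact
\[
(\eta-\eta_1)(\eta-\eta_2)=-\lambda(1-\lambda)(\eta_1-\eta_2)^2.
\]
You still need Lemma~\ref{lemequivalence}(3) at two places: at $t=0$ to know $\eta_1(0)\neq\eta_2(0)$ so that $\lambda$ is well defined, and at generic $t\in\mathbb{D}^c$ so that $(\eta_1-\eta_2)^2>0$ and the strict inequality survives. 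What your approach buys is transparency: "being strictly between (or outside) $\eta_1$ and $\eta_2$" becomes a statement about the fixed scalar $\lambda$, rather than a sign-bookkeeping argument across the discrete set $\mathbb{D}$, and it makes visible that the containment property is literally a pointwise algebraic invariant of the affine weight. What the paper's route buys is that it reuses the already-proved crossing lemma with no further computation, and it extends verbatim to $q_0=0$ (Proposition~\ref{etacover2}) where your decomposition would need a modified matching argument because $0\in\mathbb{D}$.
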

\begin{proof}
First, observe that $\eta_i's$ satisfy \eqref{firstordereta} together with $\eta$. Indeed, if a function satisfies \eqref{firstordereta} with $\tilde\eta(t_A^1)=0$, then 
$$
\tilde\eta'(t_A^1) = \frac{A(t_A^1) + k\tilde\eta(t_A^1)s(t_A^1)}{q(t_A^1)} = 0,
$$
since $A(t_A^1)=0$. By the first assertion of Lemma \ref{lemequivalence} and uniqueness of solutions to ODEs, we must have $\tilde\eta\equiv\eta_1$. An identical argument applies to $\eta_2$.

Next,  the solution $\eta$ of \eqref{simplestODEeta} also satisfies \eqref{firstordereta}. This follows directly from the definition of $A$ in \eqref{Aexp} and the form of  \eqref{simplestODEeta}. Consequently, the three functions, $\eta,\eta_1,\eta_2$ each satisfy the hypothesis of Lemma \ref{lemequivalence} with $t_\ast = 0$. 

The conclusion now follows immediately from Lemma \ref{lemequivalence}. At every $t\in\mathbb{D}$, the graphs of $\eta_1,\eta_2,\eta$ intersect, which ensures that $\eta$ remains between $\eta_1,\eta_2$ if and only if it it satisfies this property initially
\end{proof}

An illustration of the situation is provided in Figure \ref{figmainprop}.

\begin{figure}[h!] 
\centering
\input{TikzCode/Fig-positivity.tex}
\caption{Example of a function $\widetilde\eta$ trapped between $\eta_i$'s, $k=1,c=1$. Top figure is with $N=5,q_0=0.05,s_0=-0.1,A_0=0.3$ and the bottom is with $N=4,q_0=0.1,s_0=-0.15,A_0=0.5$.}
\label{figmainprop}
\end{figure}

\begin{proposition}
\label{etacover2}
Suppose $q_0= 0$. If
$$
\eta_1'(0)< w_0 < \eta_2'(0),
$$ 
then for any $t>0$, 
$$
\min\{\eta_1(t),\eta_2(t)\}< \eta(t) < \max\{\eta_1(t),\eta_2(t)\},\quad  t\in\mathbb{D}^c.
$$ 
Conversely, if 
$$
w_0\notin \left(\eta_1'(0), \eta_2'(0) \right),
$$ 
then
$$
\eta(t)\notin  \left(\min\{\eta_1(t),\eta_2(t)\}, \max\{\eta_1(t),\eta_2(t)\} \right),\quad  t\in\mathbb{D}^c.
$$
\end{proposition}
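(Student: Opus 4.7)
The plan is to adapt the strategy of Proposition \ref{etacover1} to the degenerate situation where $q_0=0$, i.e.\ $0\in\mathbb{D}$. The key new feature is that assertion 2 of Lemma \ref{lemequivalence} forces $\eta_1(0)=\eta_2(0)=\eta(0)=-A_0/(ks_0)$ (note that $s_0\neq 0$ under hypothesis \eqref{gsseccond}, since $q_0=0$ puts $s_0\in\{s_{\min},s_{\max}\}$ and Lemma \ref{qslemapp} guarantees both extrema are nonzero). Thus all three functions share the same initial value and the initial datum $\eta_0$ carries no discriminating information; the natural replacement is the first derivative at $t=0$, which for $\eta$ is exactly $w_0=\eta'(0)$.

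First I would observe that $\eta,\eta_1,\eta_2$ all satisfy the second-order linear equation \eqref{etasecondorder} on $(-\infty,\infty)$ and hence \eqref{firstordereta} on $\mathbb{D}^c$, so they fall within the hypotheses of Lemma \ref{lemequivalence}. Assertion 4 applied at $0\in\mathbb{D}$ ensures $\eta_1'(0)\neq\eta_2'(0)$, so the interval $(\eta_1'(0),\eta_2'(0))$ in the hypothesis is nondegenerate, and the boundary cases $w_0=\eta_i'(0)$ force $\eta\equiv\eta_i$ by uniqueness for \eqref{etasecondorder} with matching data $(\eta(0),\eta'(0))$, which accounts for the converse along its boundary. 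Then a first-order Taylor expansion at $t=0$ translates the derivative inequality $\eta_1'(0)<w_0<\eta_2'(0)$ into the value inequality $\eta_1(t)<\eta(t)<\eta_2(t)$ for all sufficiently small $t>0$, effectively reducing the problem to the setting of Proposition \ref{etacover1} on the component of $\mathbb{D}^c$ immediately to the right of $0$.

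From there I would simply iterate the crossing argument used in Proposition \ref{etacover1}. By assertion 3 of Lemma \ref{lemequivalence}, the three functions are pairwise distinct on each connected component of $\mathbb{D}^c$, so the ordering $\eta_1<\eta<\eta_2$ persists until the next time $t_q\in\mathbb{D}$. At that $t_q$, assertion 4 forces the three pairwise differences $\eta-\eta_1$, $\eta_2-\eta$ and $\eta_2-\eta_1$ to change sign \emph{simultaneously}, flipping the configuration to $\eta_2<\eta<\eta_1$, which still satisfies $\min\{\eta_1,\eta_2\}<\eta<\max\{\eta_1,\eta_2\}$. Iterating through every point of $\mathbb{D}$ (which is a discrete set on the line) delivers the strict squeezing on all of $\mathbb{D}^c$. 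The converse is proved by the same template: if $w_0\notin[\eta_1'(0),\eta_2'(0)]$, then near $t=0$ the function $\eta$ lies strictly above both $\eta_i$ (or strictly below both), and this property propagates through each $\mathbb{D}$-crossing because all three pairwise differences flip sign together, so the outside configuration is preserved in each connected component of $\mathbb{D}^c$.

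The only potentially delicate point is the simultaneous sign-flip bookkeeping at $\mathbb{D}$-points, but this is already packaged into assertion 4 of Lemma \ref{lemequivalence}; no genuinely new analytic input is required beyond what was developed for Proposition \ref{etacover1}. The main conceptual adjustment is recognizing that when $q_0=0$ the fiber parametrizing solutions of \eqref{etasecondorder} through the common point $-A_0/(ks_0)$ is indexed by the slope $w_0$ rather than by the value $\eta_0$, which is precisely why the hypothesis and conclusion are phrased in terms of derivatives instead of values.
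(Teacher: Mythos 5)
Your proposal is correct and follows essentially the same route as the paper: the paper's proof is a one-line remark that the argument of Proposition~\ref{etacover1} carries over with the single modification that $0\in\mathbb{D}$, so the discriminating initial datum is the slope $w_0$ rather than the value $\eta_0$, and the ordering propagates via the simultaneous sign-flip of assertion~4 of Lemma~\ref{lemequivalence}. You have simply unpacked this remark in full detail, including the equal-value observation $\eta(0)=\eta_1(0)=\eta_2(0)=-A_0/(ks_0)$ from assertion~2 and the boundary/uniqueness case $w_0=\eta_i'(0)$, all of which is consistent with the paper's intended argument.
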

\begin{proof}
The proof is very similar to that of Proposition \ref{etacover1} and follows from Lemma \ref{lemequivalence}, only that here the starting time $t=0$ is when the functions $\eta_1,\eta_2,\eta$ cross each other. In other words, $0\in\mathbb{D}$.
\end{proof}

As noted earlier, when $N\neq 4$, equation \eqref{etasecondorder} admits an unbounded solution. In contrast, for $N=4$, numerical evidence strongly indicates that all nontrivial solutions of \eqref{eq:homogen} are $T$-periodic; see Figure \ref{fig:PerAssumpn}. This behavior would, in turn,  yield a subcritical region of positive measure. Motivated by this observation, and for the purposes of the present section, we henceforth restrict attention to the case $N=4$. 

\begin{figure}[h!] 
\centering

\input{TikzCode/Fig-numericAssumpn}
\caption{Examples for $N=4$.}
\label{fig:PerAssumpn}
\end{figure}

At this stage, we do not provide a proof of the $T$-periodicity of the solution of \eqref{etasecondorder}. Instead, we describe the procedure for obtaining the sharp critical threshold under the assumption that all solutions of
\eqref{eq:homogen} are $T$-periodic when $N=4$. A straightforward computation, using the definitions of $\eta$ in \eqref{etawtransf} shows that this assumption is equivalent to Assumption \ref{perassump}. 

\begin{corollary}
\label{maincorol}
If the fundamental matrix of \eqref{etasecondorder} is $T$-periodic, then $\eta(t)>0$ for all $t>0$ if and only if one of the following holds,
\begin{itemize}
    \item If $q_0\neq 0$ then, 
    $$
    \min\{\eta_1(0),\eta_2(0)\}< \eta_0 < \max\{\eta_1(0),\eta_2(0)\}.
    $$
    \item If $q_0 = 0$ then,
    $$
    \eta_1'(0)< w_0 < \eta_2'(0).
    $$
\end{itemize}
\end{corollary}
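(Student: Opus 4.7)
The plan is to deduce Corollary \ref{maincorol} as a direct consequence of the sandwich results (Propositions \ref{etacover1} and \ref{etacover2}), the positivity of $\eta_1,\eta_2$ from Proposition \ref{propetaipos}, and the interlacing/crossing properties from Lemma \ref{lemequivalence}.

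For the sufficiency, suppose first $q_0\ne 0$ and $\min\{\eta_1(0),\eta_2(0)\}<\eta_0<\max\{\eta_1(0),\eta_2(0)\}$. Proposition \ref{etacover1} immediately yields
\[
\min\{\eta_1(t),\eta_2(t)\}<\eta(t)<\max\{\eta_1(t),\eta_2(t)\},\qquad t\in\mathbb{D}^c.
\]
By Proposition \ref{propetaipos}, $\eta_i(t)>0$ on $[0,T]\setminus\{t_A^i\}$ and $t_A^1\ne t_A^2$, so $\max\{\eta_1(t),\eta_2(t)\}>0$ on $\mathbb{D}^c$; at each isolated time $t_A^i$ the strict inequality further gives $0=\eta_i(t_A^i)<\eta(t_A^i)<\eta_{3-i}(t_A^i)$, whence $\eta>0$ on $\mathbb{D}^c$. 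For $t_q\in\mathbb{D}$, assertion 2 of Lemma \ref{lemequivalence} provides $\eta(t_q)=-A(t_q)/(ks(t_q))>0$ directly. Periodicity (Proposition \ref{etaiperiod}) then upgrades the conclusion to all $t>0$. The case $q_0=0$ is symmetric: here $0\in\mathbb{D}$ forces $\eta_0$ to be determined by $q_0,s_0,A_0$ via Lemma \ref{lemequivalence}(2), so the correct free parameter to constrain is $w_0$, and Proposition \ref{etacover2} again sandwiches $\eta$ between two eventually-positive functions.

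For the necessity, suppose the hypothesized inequality fails. If equality holds, say $\eta_0=\eta_i(0)$ for some $i$ (in the $q_0\ne 0$ case), then $\eta$ and $\eta_i$ agree at a point of $\mathbb{D}^c$ and both satisfy \eqref{firstordereta}; the contrapositive of Lemma \ref{lemequivalence}(3) forces $\eta\equiv\eta_i$, which vanishes at $t_A^i$. Otherwise $\eta_0$ lies strictly outside the closed interval, Proposition \ref{etacover1} places $\eta(t)$ strictly outside the open sandwich on every connected component of $\mathbb{D}^c$, and we must show that $\eta$ reaches zero in finite time. The crucial input is assertion 4 of Lemma \ref{lemequivalence}: at each $t_q\in\mathbb{D}$ every pair from $\{\eta,\eta_1,\eta_2\}$ crosses the common positive value $-A(t_q)/(ks(t_q))$ with pairwise distinct slopes, so the total ordering of the three functions reverses completely from one interval of $\mathbb{D}^c$ to the next. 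Since within each period $T$ the set $\mathbb{D}^c\cap[0,T]$ consists of exactly two intervals containing $t_A^1$ and $t_A^2$ respectively, there is some such interval on which $\eta$ is the smallest of the three; on this interval, $\eta(t_A^j)\le\eta_j(t_A^j)=0$ while the endpoint values $\eta(t_q^i),\eta(t_q^{i+1})$ are strictly positive, so continuity forces $\eta$ to vanish inside. The $q_0=0$ case is handled identically via Proposition \ref{etacover2}.

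The main obstacle is the necessity direction: merely knowing $\eta$ lies outside the strict open sandwich on $\mathbb{D}^c$ is not in itself enough to produce a zero of $\eta$, so one must combine the full reversal of ordering at each $\mathbb{D}$-point with the guaranteed placement of $t_A^1,t_A^2$ in distinct intervals bounded by consecutive $t_q$'s. The boundary case $\eta_0\in\{\eta_1(0),\eta_2(0)\}$ also needs a separate short ODE-uniqueness argument to reduce it to Proposition \ref{propetaipos}.
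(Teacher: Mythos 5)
Your proof is correct and follows essentially the same route as the paper: invoke Propositions \ref{etacover1}/\ref{etacover2} to sandwich $\eta$, use Proposition \ref{propetaipos} for positivity of $\eta_1,\eta_2$ away from $t_A^i$, handle the $\mathbb{D}$-points via Lemma \ref{lemequivalence}(2), and extend by periodicity. You have fleshed out the necessity direction (ordering reversal at each $t_q$ placing $\eta$ below the vanishing $\eta_j$ on some interval, plus the boundary case $\eta_0=\eta_i(0)$ via ODE uniqueness) in more explicit detail than the paper's terse one-line proof, but the underlying mechanism is the same.
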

\begin{proof}
The result for $[0,T)$ follows from Propositions \ref{etacover1} or \ref{etacover2}, followed by an application of Proposition \ref{propetaipos}. The assumption of periodicity extends it for all times.
\end{proof}

Using the results developed above, we move on to proving Theorem \ref{ctcn}.\\

\textit{Proof of Theorem \ref{ctcn}:} Assume that the initial data satisfy the hypothesis of the Theorem. Along each characteristic path \eqref{chpath}, this translates to the condition that for all $\beta>0$,  
$$
(\beta,u_0(\beta),\phi_{0r}(\beta),u_{0r}(\beta),\rho_0(\beta))\in\Theta_4.
$$
We restrict our attention to a single characteristic path and 
suppress explicit dependence on $\beta$ in the initial data. 
Under the transformation \eqref{vartransf},  the relevant unknowns become those of the ODE system \eqref{fullodesys}, namely $(q,s,p,\rho)$ with initial data $$(q(0),s(0),p(0),\rho(0)) = \left( \frac{u_0}{\beta},-\frac{\phi_{0r}}{\beta},u_{0r},\rho_0 \right).
$$
Global-in-time existence of these transformed variables is equivalent to the global-in-time existence of the original variables. 

If $\rho(0)= 0$, then as noted via simple calculations at the beginning of Section \ref{secblowup}, then density $\rho$  blows up in finite time. Hence, we can safely assume $\rho(0)>0$.  Applying the transformations  \eqref{etawtransf}, we work instead with the unknowns $(q,s,\eta,w)$ where initial data 
$$
(q(0),s(0),\eta(0),w(0))= (q(0),s(0),p(0)/\rho(0),1/\rho(0)).
$$
Using \eqref{vartransf} and \eqref{etawtransf}, we note that indeed $a=q(0)w(0)-k\eta(0)s(0)$. Set $A(0)=a$ to obtain $A(t)$ as defined in \eqref{Aexp} and satisfying \eqref{Aode}.
Turning to Definition \ref{defthetan}, and using \eqref{tildestrans}, we identify $y_{m}=y_1,y_{M}=y_2$ as in \eqref{tildesboundeq} to get,
$$
A(0) \in \frac{k}{2}\left( \sqrt{ \frac{y_1}{\tilde s_0} }-1, \sqrt{ \frac{y_2}{\tilde s_0}}-1 \right).
$$
Rewriting condition \eqref{gsseccond} in terms of the original variables gives the desired form. Note that $u_0 =  0$ if and only if $q(0)= 0$. In Definition \eqref{aint}, this corresponds precisely to whether $x=0$. 

If $x\neq 0$ (equivalently $q(0)\neq 0$), then applying the transformation \eqref{etawtransf}, condition \eqref{aint} becomes 
$$
\min\{ \eta_1(0),\eta_2(0) \}  <\eta(0) < \max\{ \eta_1(0),\eta_2(0) \} ,
$$
which is exactly the hypothesis of Proposition \ref{etacover1}. 

If $x= 0$ (equivalently $q(0)= 0$), then using \eqref{vartransf} and \eqref{etawtransf}, \eqref{aint} reduces to
$$
w(0) \in -\frac{ks(0)\eta(0)}{A(0)}\ (\eta_1'(0), \eta_2'(0)).
$$
From $q(0)=0$ and \eqref{Aexp}, the above inclusion simplifies to 
$$
w(0)\in (\eta_1'(0), \eta_2'(0)),
$$
which is exactly the hypothesis of Proposition \ref{etacover2}. 

Along any given characteristic path, the functions $\eta_1$ and $\eta_2$ are completely determined, because $A(0),q(0),s(0)$ are fixed once the initial data are fixed. 
Thus, one can first compute  $\eta_1(0),\eta_2(0)$,  and $\eta_1'(0),\eta_2'(0)$ and then check which hypothesis applies depending on whether  $q(0)=0$ or not.

By Corollary \ref{maincorol}, we conclude that $\eta(t)$ remains positive for all $t>0$. By Proposition \ref{rhoptogether}, all unknowns $(q,s,p,\rho)$ in the system \eqref{fullodesys} then exist for all all time. Since the above argument applies along every characteristic path,
 Lemma \ref{radialfield} combined with  Theorem \ref{local} yields a  global-in-time smooth solution to \eqref{mainsys}.

Conversely, suppose there is a characteristic path corresponding to some parameter $\beta^\ast >0$ such that,
$$
(\beta^\ast, u_0^\ast,\phi_{0r}^\ast,u_{0r}^\ast,\rho_0^\ast):=(\beta^\ast,u_0(\beta^\ast),\phi_{0r}(\beta^\ast),u_{0r}(\beta^\ast),\rho_0(\beta^\ast))\notin\Theta_4.
$$
Without loss of generality, we assume $\rho_0^\ast>0$. Two possibilities arise: \\ 
1. The inclusion in Definition \ref{defthetan} fails, or \\ 
2. The inclusion holds, but the condition  \eqref{aint} fails. \\ 
Suppose the first case occurs. As argued earlier, this implies 
$$
A_0^\ast \notin \frac{k}{N-2}\left( \left( \frac{y_1}{\tilde s_0} \right)^{\frac{N-2}{N}}-1, \left( \frac{y_2}{\tilde s_0} \right)^{\frac{N-2}{N}}-1 \right).
$$
Using \eqref{kappaexp} in the above expression, and subsequently with the help of Corollary \ref{ftbresult1}, we conclude that the density must blow up in finite time. 

Now suppose the above inclusion holds and \eqref{aint} fails. 
Arguing in the same manner as above, we find: 
\begin{itemize}
\item  If  $q(0)\neq 0$,
$$
\eta(0)\notin\left(\min\{ \eta_1(0),\eta_2(0) \},   \max\{ \eta_1(0),\eta_2(0) \}\right) ,
$$
\item If $q(0)=0$,
$$
w(0)\notin (\eta_1'(0), \eta_2'(0)).
$$
\end{itemize}
By Corollary \ref{maincorol}, $\eta(t)$ must then reach zero at some finite time $t_c>0$. From Proposition \ref{rhoptogether}, this forces a breakdown of the solution  at $t=t_c$ and smoothness is lost. This completes the proof of the Theorem. \qed

\section{The zero background case}
\label{secc0}
The zero background case has been studied extensively in the literature. Notably, the authors of \cite{WTB12} obtained a sharp critical threshold condition, however, assuming that the flow is expanding ($u_0>0$). A more refined analysis appears in \cite{Tan21}, although the resulting threshold condition was not sharp. 

In this section, we derive a sharp characterization of the subcritical and supercritical regions for general velocity. To this end, we consider system \eqref{fullodesys} with $c=0$,
\begin{subequations}
\label{fullodesys2}
\begin{align}
& \rho' = -(N-1)\rho q - p\rho, \label{rhoode2}\\
& p' = -p^2 - k(N-1)s + k\rho, \label{pode2}\\
& q' = ks-q^2, \label{qode2}\\
& s' = -Nqs, \label{sode2}
\end{align}
\end{subequations}
with the same notation for initial data as before.  Recall also system \eqref{simplestODE}. With zero background, it reduces to,
\begin{subequations}
\label{c0simplestODE}
\begin{align}
& \eta' = w, \label{c0simplestODEeta}\\
& w' = -ks(N-1)\eta + k\Gamma^{N-1}. \label{c0simplestODEw}
\end{align}
\end{subequations}
We will again make use of the quantity $A$ defined in \eqref{Aexp}. A direct computation shows that, for $N >2$,  the expression for $A$ is identical to \eqref{Aeq}. Likewise,  for $N=2$,  the corresponding quantity $B$ coincides with
\eqref{A2eq}. 

A detailed analysis of the $q,s$-subsystem for zero background was carried out in \cite{Tan21}. We state below only the results that will be used directly.

\begin{proposition}[\cite{Tan21}(Theorem 3.5, Lemma 3.4, 3.7)]
\label{tanconv}
Consider the $2\times 2$ ODE system \eqref{qode2}-\eqref{sode2}, and suppose $s_0>0$. Then: 
\begin{itemize}
\item $(q(t),s(t))\to (0,0)$ as $t\to\infty$. Moreover, if $q_0<0$ there exists a unique time $t_q$ such that 
$$
q(t_q)=0,  \quad s(t_q)=\max_{[0,\infty)}s(t)
$$
and $q(t)>0$ for all $t>t_q$.
\item There exists $T>0$ large enough so that, for all $t\geq T$, 
\begin{subequations}
\label{qsconvrate}
\begin{align}
C^q_m(t+1)^{-1} \leq & q(t) \leq C^q_M (t+1)^{-1},\label{qconvrate}\\
C_m^s(t+1)^{-N} \leq & s(t) \leq C_M^s(t+1)^{-N},\qquad  N\geq 3,\label{snconvrate}\\
C_m^s(t+1)^{-2}(1+\ln(t+1))^{-1} \leq & s(t) \leq C_M^s(t+1)^{-2}(1+\ln(t+1))^{-1},  N=2, \label{s2convrate}
\end{align}
\end{subequations}
where $C^q_m,C^q_M,C_M^s,C_m^s$ are positive constants depending on $T,N,q_0,s_0$.
\end{itemize}
\end{proposition}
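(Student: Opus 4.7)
The plan is to follow the phase-plane analysis already initiated in Section \ref{secqs} but with $c=0$, exploiting the same conserved quantity $R_N(q,s)$. For $N\geq 3$, with $\tilde s=s$ since $c=0$, the trajectory relation \eqref{Rconstant} becomes
\begin{align*}
q^2+\frac{2ks}{N-2}=R_N\, s^{2/N},
\end{align*}
and for $N=2$ it becomes $q^2/s+k\ln s=R_2$. Because $s_0>0$ and $s'=-Nqs$, the sign of $s$ is preserved, so $s>0$ for all time under consideration, and both formulas remain well-defined. Geometrically, trajectories are simple arcs in the open upper half-plane $\{s>0\}$ that degenerate to the origin. I will read off the first bullet directly from this picture: if $q_0<0$, then $s$ is initially increasing by $s'=-Nqs>0$; from $q'=ks-q^2\geq ks_0-q^2$ on $\{q<0\}$, a Riccati comparison shows $q$ must reach zero in finite time at some $t_q>0$; at that instant $s'=0$ and $s$ attains its maximum; for $t>t_q$, the trajectory cannot return to $q=0$ because the conservation law makes the arc single-valued in $s$, hence $q(t)>0$ for all $t>t_q$.

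For the convergence $(q,s)\to(0,0)$, I would argue as follows. After $t_q$ (or from $t=0$ if $q_0\geq 0$, treating $q_0=0$ together with $s_0>0$ as a starting point on the arc), one has $q>0$ and $s'=-Nqs<0$, so $s$ is monotone decreasing and bounded below by $0$, giving $s(t)\downarrow s_\infty\geq 0$. If $s_\infty>0$, the conservation law forces $q(t)\to q_\infty:=\sqrt{R_N s_\infty^{2/N}-2ks_\infty/(N-2)}>0$ (the value is positive because $s_\infty<s(t_q)$ strictly and the arc meets $q=0$ only at $s=s(t_q)$). But then $s'\to-Nq_\infty s_\infty<0$, contradicting $s\to s_\infty$. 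Hence $s_\infty=0$, and the conservation law then forces $q\to 0$.

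For the decay rates, I would use the conservation law to trade $q$ for $s$ at large times. For $N\geq 3$, $q^2=R_N s^{2/N}-\frac{2ks}{N-2}$; since $s\to 0$, the $s^{2/N}$ term dominates, so there exist constants $0<c_1<c_2$ and $T_0$ with
\begin{align*}
c_1 s^{1/N}(t)\leq q(t)\leq c_2 s^{1/N}(t),\qquad t\geq T_0.
\end{align*}
Plugging these into $s'/s=-Nq$ and integrating yields $s(t)^{-1/N}\asymp t$, i.e.\ $s(t)\asymp (t+1)^{-N}$, and then $q(t)\asymp (t+1)^{-1}$, giving \eqref{qconvrate} and \eqref{snconvrate}. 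For $N=2$, the conservation law reads $q^2=s(R_2-k\ln s)$, so as $s\to 0^+$ the logarithm dominates: $q\asymp\sqrt{-ks\ln s}$. The ODE $s'=-2qs$ then becomes, up to bounded multiplicative factors, $-(s)^{-1/2}(-\ln s)^{-1/2}\, ds\asymp dt$. A direct integration (or the substitution $u=\sqrt{-\ln s}$) shows $s(t)\asymp(t+1)^{-2}(1+\ln(t+1))^{-1}$, which then gives $q(t)\asymp(t+1)^{-1}$ via the conservation law, establishing \eqref{s2convrate} together with the same rate \eqref{qconvrate} for $q$.

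The main obstacle is making the asymptotics two-sided and uniform, i.e.\ producing both the lower and upper constants in \eqref{qsconvrate} simultaneously; this requires showing that the dominant-term approximations above have multiplicative errors bounded between explicit positive constants, not merely $1+o(1)$. The $N=2$ case is the most delicate because of the logarithmic correction: one must verify that the implicit relation obtained after integrating $ds/(s\sqrt{-ks\ln s})\asymp-dt$ can indeed be inverted to yield the stated two-sided bound with the precise exponent $-2$ on $(t+1)$ and exponent $-1$ on $(1+\ln(t+1))$. Since the present paper cites \cite{Tan21} for all three bullets, I would at this point simply refer to the detailed computations carried out there rather than reproduce them.
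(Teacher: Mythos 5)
The paper does not prove Proposition~\ref{tanconv}: it is imported verbatim from~\cite{Tan21} (Theorem~3.5, Lemmas~3.4 and~3.7), so there is no paper proof to compare your sketch against. What you have written is a plausible independent reconstruction along the same lines (conservation of $R_N$, monotonicity of $s$ once $q>0$, then asymptotic inversion), and it is the natural way to attack the statement. Your treatment of the second bullet is a correct heuristic, and you rightly flag that upgrading the $1+o(1)$ approximations to two-sided constants, and inverting the implicit relation in the $N=2$ case, is where the real technical work sits; deferring that to~\cite{Tan21} is appropriate given the citation.

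There is, however, a genuine gap in your argument for the first bullet. You claim that ``from $q' = ks - q^2 \geq k s_0 - q^2$ on $\{q<0\}$, a Riccati comparison shows $q$ must reach zero in finite time.'' This comparison only helps when $q_0 \in (-\sqrt{k s_0},\, 0)$: then the comparison solution $y' = k s_0 - y^2$, $y(0)=q_0$, is increasing and crosses zero, and $q\geq y$ forces $q$ to cross as well. But if $q_0 \leq -\sqrt{k s_0}$, the comparison solution $y$ is non-increasing (indeed it diverges to $-\infty$ in finite time when $q_0 < -\sqrt{k s_0}$), so $q \geq y$ gives no information. The cleaner route, and the one consistent with the rest of the section, is to argue directly from the conserved trajectory. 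For $N\geq 3$, $q^2 = R_N\, s^{2/N} - \tfrac{2k}{N-2}s$ with $R_N = R_N(q_0,s_0)$; the right-hand side is nonnegative only for $s\leq s_{\max}:=\bigl(\tfrac{(N-2)R_N}{2k}\bigr)^{N/(N-2)}$ and vanishes exactly at $s=s_{\max}$. While $q<0$ we have $s' = -Nqs>0$, so $s$ increases monotonically; it is trapped below $s_{\max}$ by the trajectory relation, and it cannot converge to a limit $s_\ast < s_{\max}$ (else $q\to -\sqrt{R_N s_\ast^{2/N}-2ks_\ast/(N-2)}<0$ and $s'$ would stay bounded away from zero). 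Hence $s\uparrow s_{\max}$ in finite or infinite time, forcing $q\uparrow 0$; and it is reached in finite time because near $s=s_{\max}$ one has $|q| \gtrsim \sqrt{s_{\max}-s}$, which integrates to a finite arrival time. The analogous argument works for $N=2$ with $R_2$ and $s_{\max}=e^{R_2/k}$. This patch uses the same conservation-law machinery you already have in hand and removes the dependence on the sign-restricted Riccati comparison.
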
 
As in the nonzero-background case, the Poisson forcing 
prevents concentration at the origin. In particular, Corollary
 \ref{qsalltime} remains valid,  and we must have $s_0>0$. We restate this as the following lemma. 
\begin{lemma}
\label{qsalltimec0}
The functions $q(t)$ and $s(t)$ in system \eqref{qode2}- \eqref{sode2} exist for all $t\geq 0$. 
\end{lemma}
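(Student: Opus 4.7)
The proof should mirror that of Corollary \ref{qsalltime}, with the radial Poisson structure doing all the work to rule out the sole obstruction to global existence in the $q$--$s$ subsystem.

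The plan is as follows. First I would recover $s_0$ along the characteristic emanating from $\beta>0$. Setting $t=0$ in the integrated Poisson identity \eqref{phirlim}, using the boundary condition $\lim_{r\to 0^+}r^{N-1}\phi_{0r}=0$ together with $c=0$, one obtains
\[
s_0=-\frac{\phi_{0r}(\beta)}{\beta}=\frac{1}{\beta^N}\int_0^\beta \rho_0(\xi)\,\xi^{N-1}\,d\xi.
\]
Since $\rho_0\geq 0$ by hypothesis (and the degenerate case $\rho_0\equiv 0$ on $[0,\beta]$ has already been dispatched at the start of Section \ref{secblowup}, where it is shown to force a Riccati-type blow-up of $p$ and therefore is irrelevant to the $q$--$s$ analysis at hand), this integral is strictly positive, yielding $s_0>0$.

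Next I would invoke Proposition \ref{tanconv} directly: its standing hypothesis $s_0>0$ is now verified, and the conclusion $(q(t),s(t))\to(0,0)$ as $t\to\infty$ together with the rate estimates \eqref{qsconvrate} presupposes that the solution is defined on all of $[0,\infty)$, which is exactly the claim. If a self-contained bound is preferred, it suffices to replay the trajectory computation from the proof of Lemma \ref{qsregularity} with $c=0$: since $s_0>0$ and $s'=-Nqs$ preserves sign, $s(t)>0$ on the maximal interval of existence, and the conserved quantity
\[
s^{-2/N}\!\left(q^2+\tfrac{2k\,s}{N-2}\right)=R_N(q_0,s_0)\quad(N\geq 3),\qquad \frac{q^2}{s}+k\ln s=R_2(q_0,s_0)\quad(N=2),
\]
furnishes uniform upper bounds on $s$ and on $q^2$, whence the standard ODE continuation principle delivers global-in-time solutions.

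The only step carrying genuine content is the Poisson-derived strict positivity of $s_0$; everything else is inherited from the $c\geq 0$ machinery of Section \ref{secqs} and the asymptotic analysis of \cite{Tan21}. There is no obstacle specific to the zero-background regime, since the only scenario in which $q$ could escape to $-\infty$ in finite time (by the natural analog of Lemma \ref{qsregularity}) would be $s_0\leq 0$, and that is precisely what the radial Poisson identity precludes.
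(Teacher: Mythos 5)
Your overall route is the paper's own: the paper states Lemma \ref{qsalltimec0} as a direct specialization of Corollary \ref{qsalltime} to $c=0$, and both your and the paper's reasoning rest on the radial Poisson identity \eqref{phirlim} giving $s_0=\beta^{-N}\int_0^\beta\rho_0(\xi)\xi^{N-1}\,d\xi>0$, after which either the conserved-trajectory computation of Lemma \ref{qsregularity} (with $c=0$) or the asymptotics of Proposition \ref{tanconv} furnish the uniform bounds needed to continue indefinitely. That is correct and matches the paper.

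The one misstep is your parenthetical dispatch of the degenerate case $\rho_0\equiv 0$ on $[0,\beta]$. The Riccati blow-up cited at the start of Section \ref{secblowup} comes from $p'\leq -p^2-kc$, whose forcing term vanishes when $c=0$; with $c=0$ the bound degenerates to $p'\leq -p^2$, which does \emph{not} force finite-time blow-up when $p_0\geq 0$. Indeed, in the zero-background setting the vacuum case is handled by Proposition \ref{proprho0}, which shows that $\rho_0(\beta)=0$ can coexist with globally bounded $p$ (when $q_0>0$ and $p_0\geq ks_0/q_0$), and that proposition itself invokes Proposition \ref{tanconv} and therefore presupposes $s_0>0$. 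So the Section \ref{secblowup} argument is not available to exclude the degenerate case here; the strict positivity $s_0>0$ genuinely requires that $\rho_0$ not vanish identically on $[0,\beta]$. The paper leaves this corner implicit as well, so this is a minor point, but the specific justification you offer for why it may be ignored does not survive the passage from $c>0$ to $c=0$.
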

We now turn to several key results. We begin by giving a sufficient condition under which $\eta$ reaches zero in finite time.
\begin{proposition}
\label{c0ftb}
Suppose $N\geq 3$. If 
$$
A_0+\frac{k}{N-2}<0,
$$
then there exists a finite time $t_c>0$ such that $\eta(t_c)=0$.
\end{proposition}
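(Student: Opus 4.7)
The plan is to first observe that the hypothesis $A_0+\frac{k}{N-2}<0$, together with the explicit formula \eqref{Aeq}
\begin{equation*}
A(\Gamma) = \Bigl(A_0+\frac{k}{N-2}\Bigr)\Gamma - \frac{k}{N-2}\Gamma^{N-1},
\end{equation*}
forces $A(t)<0$ for all $t\geq 0$, since both summands on the right are strictly negative for $\Gamma>0$, $k>0$, $N\geq 3$. One might hope to invoke Proposition \ref{AsignFTB} directly, but that proof relied on $s$ attaining a strictly negative minimum; in the zero-background setting we have $s(t)>0$ for all $t$ (cf.\ Lemma \ref{qsalltimec0} and Proposition \ref{tanconv}), so a genuinely new argument is needed, driven by the long-time behavior.

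I would assume for contradiction that $\eta(t)>0$ for every $t>0$ and show that instead $\eta(t)\to-\infty$. The starting point is the scalar linear ODE obtained by substituting $w=\eta'$ into $A=qw-k\eta s$, namely
\begin{equation*}
\eta' - \frac{ks}{q}\,\eta = \frac{A}{q}, \qquad \text{valid wherever } q\neq 0.
\end{equation*}
By Proposition \ref{tanconv}, there is $T_0\geq 0$ such that $q(t)>0$ for $t\geq T_0$ and the asymptotics $q(t)\sim C_q(t+1)^{-1}$ and $s(t)\sim C_s(t+1)^{-N}$ both hold. Combined with $\Gamma=(s/s_0)^{1/N}\sim C_\Gamma(t+1)^{-1}$ from \eqref{expqex} and the formula for $A$ above, one obtains
\begin{equation*}
\frac{A(t)}{q(t)}\longrightarrow -\alpha_\infty<0, \qquad \frac{ks(t)}{q(t)}=O\bigl((t+1)^{-(N-1)}\bigr),
\end{equation*}
with $\alpha_\infty = -\bigl(A_0+\tfrac{k}{N-2}\bigr)\,C_\Gamma/C_q>0$. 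The assumption $N\geq 3$ is precisely what makes the coefficient $ks/q$ integrable on $[T_0,\infty)$.

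From there I would solve the linear ODE on $[T_0,\infty)$ with integrating factor $\mu(t)=\exp\bigl(-\int_{T_0}^t \tfrac{ks}{q}\bigr)$, yielding
\begin{equation*}
\mu(t)\,\eta(t)=\mu(T_0)\,\eta(T_0)+\int_{T_0}^t \mu(\tau)\,\frac{A(\tau)}{q(\tau)}\,d\tau.
\end{equation*}
Integrability of $ks/q$ confines $\mu$ between two positive constants, while $A/q\to -\alpha_\infty<0$ drives the integral on the right to $-\infty$, so $\eta(t)\to-\infty$, contradicting positivity. It remains only to check $\eta(T_0)>0$: if $T_0=0$ this is the standing assumption $\eta_0=1/\rho_0>0$; if $q$ has a zero at some $t_q$ before becoming positive, then $A(t_q)=-k\eta(t_q)s(t_q)<0$ together with $s(t_q)>0$ forces $\eta(t_q)>0$, so taking $T_0$ slightly past $t_q$ works. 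Since $\eta$ begins positive and tends to $-\infty$, the intermediate value theorem produces the desired $t_c>0$ with $\eta(t_c)=0$.

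The step I expect to be hardest is the sharp long-time asymptotic analysis that certifies $ks/q$ is integrable, because this is exactly where the dimensional restriction bites: in $N=2$ the slower decay \eqref{s2convrate} gives $ks/q\sim 1/\bigl(t\log t\bigr)$, which is borderline non-integrable, so the entire mechanism above collapses and a separate analysis is needed, consistent with the criticality highlighted in Remark \ref{rembifur}.
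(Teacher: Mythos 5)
Your proof is correct and follows essentially the same route as the paper: both rewrite $A=qw-k\eta s$ as a first-order linear ODE for $\eta$, use the decay rates of Proposition \ref{tanconv} to show the integrating factor $\exp\bigl(\pm k\int s/q\bigr)$ is uniformly bounded (precisely because $ks/q$ is integrable when $N\geq 3$) while $A/q$ is eventually bounded above by a negative constant, and conclude $\eta\to-\infty$. One small caution: Proposition \ref{tanconv} furnishes only two-sided power-law bounds with possibly different constants, so $A(t)/q(t)$ need not converge to a single limit $-\alpha_\infty$; the weaker statement $A(t)/q(t)\leq -C<0$ for all large $t$ is what the estimates actually deliver, and it suffices.
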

\begin{proof}
Since $\eta$ satisfies \eqref{firstordereta}, we have,
\begin{align}
\label{etaexpode}
\left( \eta e^{-k\int^t\frac{s}{q}} \right)' = \frac{A}{q} e^{-k\int^t\frac{s}{q}} .
\end{align}
Let $t_1>0$ be large enough so that the decay estimates of  Proposition \ref{tanconv} hold for all $t\geq t_1$.  Throughout the proof, constants  
$$
0<C_m<C_M
$$
 may change from line to line but only depend on $k,N,q_0,s_0,\eta_0,w_0$ and $t_1$. 
 
 From \eqref{qsconvrate},  for $t_1\leq \tau<t$,
\begin{align*}
& k\int_{\tau}^t\frac{s}{q}  \leq C_M \int_{\tau}^t (\xi+1)^{-(N-1)}d\xi \leq C_M
\end{align*}
Integrating \eqref{etaexpode} from $t_1$ to $t$, we have,
\begin{align*}
\eta(t) & = \eta(t_1) e^{k\int_{t_1}^t\frac{s}{q}} + \int_{t_1}^t \frac{A(\tau)}{q(\tau)}e^{k\int_\tau^t\frac{s}{q}}d\tau =: \RN{1} + \RN{2}.
\end{align*}
By \eqref{qsunibounds}, the exponential factor is uniformly bounded for $t\geq t_1$, so $\RN{1}$ is uniformly bounded as well.  

Next, we estimate $A$. Using \eqref{Aeq} and \eqref{expqex}, 
and the decay estimate \eqref{snconvrate}  for $s$, 
we find that for all $t\geq t_1$,
\begin{align*}
A(t) & = \left( A_0+\frac{k}{N-2}\right) \Gamma -\frac{k}{N-2}\Gamma^{N-1} \leq -C_m (t+1)^{-1} .
\end{align*}
In particular, from \eqref{qconvrate},
$$
\frac{A(t)}{q(t)} \leq -C_m, \quad t\geq t_1.
$$
Since $q,s>0$  and $A<0$ for $t>t_1$, we obtain 
\begin{align*}
\RN{2} & = \int_{t_1}^t \frac{A(\tau)}{q(\tau)}e^{k\int_\tau^t\frac{s}{q}}d\tau 
 \leq \int_{t_1}^t \frac{A(\tau)}{q(\tau)}d\tau 
 \leq -C_m\int_{t_1}^t d\tau.
\end{align*}
Combining the bounds on $\RN{1}$ and $\RN{2}$, we conclude that for all  $t> t_1$, 
\begin{align*}
\eta(t)& = \RN{1}+\RN{2}  \leq C_M - C_m \int_{t_1}^t d\tau, 
\end{align*}
which becomes negative at some finite time. Thus,  there exists  $t_c>0$ such that $\eta(t_c)=0$. This completes the proof.
\end{proof}
We now move on to characterizing the subcritical region. 
As a first step, we define $\eta_1$ as the solution of the initial value problem obtained from \eqref{firstordereta}, 
\begin{align}
\label{c0eta1}
\eta_{1}' = \frac{A}{q} + \frac{ks}{q}\eta_{1},\quad \eta_{1} (t_A)=0,
\end{align}
where $t_A>0$ is the time (possibly unique) such that $A(t_A)=0$, provided such a time exists. 
\begin{proposition}
\label{c0prop1}
Consider the case $N\geq 3$. Suppose $s_0>0$ and $q_0>0$. Given $\eta_0>0$, we have $\eta(t)>0$ for all $t>0$ if one of the following conditions holds: 
\begin{itemize}
    \item $A_0\geq 0$,
    \item $-\frac{k}{N-2} < A_0 < 0$ and $\eta_0>\eta_{1}(0)$.
\end{itemize}
Conversely, if $-\frac{k}{N-2} < A_0 < 0$ and $\eta_0\leq \eta_{1}(0)$, then there exists a time $t_c>0$ such that $\eta(t_c)=0$.
\end{proposition}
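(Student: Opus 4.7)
The plan is to exploit the first-order linear ODE that $\eta$ satisfies under the current hypotheses. Since $q_0>0$ and $s_0>0$, Proposition~\ref{tanconv} gives $q(t),s(t)>0$ for all $t\ge 0$, so the identity $q\eta'-ks\eta = A$, which follows from $\eta'=w$ and the definition \eqref{Aexp}, may be divided through by $q$ to obtain a bona fide first-order linear equation for $\eta$ on $[0,\infty)$. With integrating factor $\psi(t):=\exp\!\bigl(-k\int_0^t s/q\bigr)$, which is strictly positive and uniformly bounded above and below since $s/q \sim (t+1)^{-(N-1)}$ is integrable at infinity when $N\ge 3$ by the rates \eqref{qsconvrate}, I obtain
\begin{equation*}
\eta(t)\psi(t) = \eta_0 + \int_0^t \frac{A(\tau)}{q(\tau)}\,\psi(\tau)\,d\tau.
\end{equation*}
Applying the same identity to $\eta_1$ (using $\psi(0)=1$) and subtracting kills the inhomogeneity, yielding the clean comparison
\begin{equation*}
\eta(t)-\eta_1(t) = \bigl(\eta_0-\eta_1(0)\bigr)\,e^{k\int_0^t s/q},
\end{equation*}
which will drive all three conclusions.

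Next I would pin down the sign of $A(t)$. Because $q>0$ and $\int_0^\infty q=\infty$ (by the lower bound $q\gtrsim 1/(t+1)$), $\Gamma(t)=e^{-\int_0^t q}$ strictly decreases from $1$ to $0$, and from \eqref{Aeq} the function $A(\Gamma) = (A_0+\tfrac{k}{N-2})\Gamma - \tfrac{k}{N-2}\Gamma^{N-1}$ has its unique positive root at $\kappa=(1+A_0(N-2)/k)^{1/(N-2)}$. When $A_0\ge 0$ one has $\kappa\ge 1$, so $\Gamma(t)\in(0,1]\subset(0,\kappa]$ forces $A(t)\ge 0$ for all $t\ge 0$. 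When $-\tfrac{k}{N-2}<A_0<0$ one has $\kappa\in(0,1)$, so by the intermediate value theorem there is a unique $t_A>0$ with $\Gamma(t_A)=\kappa$, and $A<0$ on $[0,t_A)$ while $A>0$ on $(t_A,\infty)$.

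The case $A_0\ge 0$ is then immediate from the integrating-factor formula: the integrand $(A/q)\psi$ is nonnegative, so $\eta(t)\psi(t)\ge \eta_0>0$, hence $\eta(t)>0$ for all $t$. For the delicate regime $-\tfrac{k}{N-2}<A_0<0$, I first verify that $\eta_1\ge 0$ throughout $[0,\infty)$, with the unique zero at $t_A$; this follows from the representation $\eta_1(t)\psi(t)=\int_{t_A}^t(A/q)\psi\,d\tau$ by signing $A$ on each side of $t_A$. With $\eta_1\ge 0$ in hand, the comparison formula yields both subcases at once. If $\eta_0>\eta_1(0)$, then $\eta(t)=\eta_1(t)+(\eta_0-\eta_1(0))e^{k\int_0^t s/q}>0$ for all $t$, since the sole zero of $\eta_1$ at $t=t_A$ is rescued by a strictly positive perturbation. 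If $\eta_0\le\eta_1(0)$, then evaluating at $t=t_A$ gives $\eta(t_A) = (\eta_0-\eta_1(0))e^{k\int_0^{t_A}s/q}\le 0$, and combined with $\eta(0)=\eta_0>0$ and continuity this forces the existence of $t_c\in(0,t_A]$ with $\eta(t_c)=0$.

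The main technical obstacle is securing the uniform two-sided bound on $\psi$, which rests squarely on the sharp decay rates supplied by Proposition~\ref{tanconv}; everything else is clean algebra once the comparison $\eta-\eta_1=(\eta_0-\eta_1(0))/\psi$ is established, and the nonnegativity of the reference solution $\eta_1$ is read off from the sign change of $A$ at $t_A$.
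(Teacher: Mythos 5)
Your proof is correct and follows essentially the same strategy as the paper's: both hinge on the comparison principle between $\eta$ and $\eta_1$ (equivalently, integrating $(\eta-\eta_1)' = \tfrac{ks}{q}(\eta-\eta_1)$), the sign analysis of $A$ via the monotone decay of $\Gamma$ through the root $\kappa$, and the nonnegativity of $\eta_1$ with its unique zero at $t_A$. The only cosmetic difference is that you establish $\eta_1\ge 0$ by signing the integral $\eta_1(t)\psi(t)=\int_{t_A}^t (A/q)\psi\,d\tau$, whereas the paper invokes $\eta_1'(t_A)=0$, $\eta_1''(t_A)>0$; also, the uniform two-sided bound on $\psi$ you flag as the ``main technical obstacle'' is not actually used anywhere in your argument (only positivity of $\psi$ matters), so that remark is superfluous.
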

\begin{proof}
Under the hypotheses and Proposition \ref{tanconv},  both $s(t)$ and $q(t)$ remain positive for all $t>0$. From \eqref{sode2}, $s(t)$ is monotonically decreasing to zero. 
By \eqref{expqex}, 
$$
\Gamma(t) = e^{-\int_0^t q}
$$
is strictly decreasing and tends to zero as $t\to\infty$.

If $A_0\geq 0$, then
$$
\kappa \geq 1,
$$
where $\kappa$ is the root of $A(\Gamma)$ as defined in \eqref{kappaexp}. By \eqref{Aeq}, this implies $A(t)>0$ for all $t>0$. Consequently, by
\eqref{etaexpode}, $\eta$ cannot reach zero in finite time if $\eta_0>0$. 

Now suppose the second hypothesis holds. Here, $\kappa < 1$. From the behavior of $s,q,\Gamma,A$ as listed above, it follows that there is a unique time $t_A>0$ such that $A(t_A)=0$ with 
$$
A(t)<0 \quad  \text{for}  \quad t<t_A \quad  \text{and} \quad A(t)>0\quad  \text{for} \quad t>t_A.
$$
By \eqref{etaexpode}, $\eta$ remains positive  for all $t>0$  if $\eta(t_A)>0$. 

In fact, $\eta_{1}(t)$ serves as a lower barrier for $\eta(t)$; see Figure \ref{c0fig1}. Since $\eta,\eta_{1}$ satisfy the same first order linear ODE,  we have the comparison relation 
$$
(\eta-\eta_{1})' = \frac{ks}{q}(\eta - \eta_{1}).
$$
By \eqref{c0eta1},  $\eta_{1}'(t_A)=0$. Differentiating once more, one checks that $\eta_{1}''(t_A)>0$, so $t_A$ is the unique pint of minimum of $\eta_{1}$ is attained. Hence, $\eta(t)>0$ for all $t>0$. 

Conversely, if $\eta_0\leq \eta_{1}(0)$, then  $\eta(t) \leq \eta_1(t)$  for all $t$, and since $\eta_1(t_A)=0$, there must exist a time $t_c \leq t_A$ such that $\eta(t_c=0)$.
\end{proof}

\begin{figure}[h!] 
\centering
\input{TikzCode/Fig-ZeroBack1.tex}
\caption{$\eta_1$ and a plausible solution $\eta$ with $k=1,N=5,q_0=2,s_0=1,A_0=-0.3$.}
\label{c0fig1}
\end{figure}

\begin{proposition}
\label{c0prop2}
Consider the case $N\geq 3$. Suppose $s_0>0,q_0=0$,  and  $A_0 >-\frac{k}{N-2} $. Given $\eta_0>0$, we have $\eta(t)>0$ for all $t>0$ if and only if 
$$
w_0>\eta_{1}'(0).
$$
\end{proposition}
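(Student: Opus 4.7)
The plan is to adapt the comparison argument of Proposition \ref{c0prop1} to the singular case $q_0 = 0$, where the first-order ODE \eqref{c0eta1} degenerates at $t = 0$ and the cutoff $\eta_0 > \eta_1(0)$ loses its meaning. The core observation is that with $q_0 = 0$, consistency of \eqref{firstordereta} at $t = 0$ forces $A_0 + k\eta_1(0) s_0 = 0$, so $\eta_1(0) = -A_0/(ks_0) = \eta_0$ automatically. Hence $\eta$ and $\eta_1$ share the same initial value, and the sharp criterion must instead come from comparing the initial slopes $w_0$ and $\eta_1'(0)$.

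Under the hypotheses $q_0 = 0$, $\eta_0 > 0$, $s_0 > 0$, we have $A_0 = -k\eta_0 s_0 < 0$, so the assumption $A_0 > -k/(N-2)$ puts $A_0 \in (-k/(N-2), 0)$. By Proposition \ref{tanconv}, $q(t) > 0$ for all $t > 0$, so $\Gamma(t) = e^{-\int_0^t q}$ decreases monotonically from $1$ towards $0$, and there is a unique $t_A > 0$ with $A(t_A) = 0$. The argument of Proposition \ref{c0prop1}, applied on any $[\epsilon,\infty)$ with $\epsilon > 0$, then yields $\eta_1 > 0$ on $(0,\infty) \setminus \{t_A\}$ together with $\eta_1(t_A) = \eta_1'(t_A) = 0$; integrating the second-order equation \eqref{etasecondorder} (with $c = 0$) backward from $t_A$ extends $\eta_1$ smoothly to $t = 0$ and pins down $\eta_1'(0)$.

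Next I would set $\psi := \eta - \eta_1$. Both functions satisfy the linear second-order ODE $\phi'' + k(N-1)s\phi = k\Gamma^{N-1}$, so $\psi$ solves the homogeneous version with $\psi(0) = 0$ and $\psi'(0) = w_0 - \eta_1'(0)$. For any $\epsilon > 0$, $\eta$ and $\eta_1$ both satisfy \eqref{firstordereta} on $[\epsilon,\infty)$, giving
\begin{equation*}
\psi(t) = \psi(\epsilon)\exp\!\left(k\int_\epsilon^t \frac{s(\tau)}{q(\tau)}\, d\tau\right), \qquad t \geq \epsilon.
\end{equation*}
A Taylor expansion yields $\psi(\epsilon) = (w_0 - \eta_1'(0))\epsilon + O(\epsilon^2)$ and the exponential factor is positive, so for small $\epsilon$ we get $\sign \psi(\epsilon) = \sign(w_0 - \eta_1'(0))$; since $t > 0$ is arbitrary, this sign propagates and $\sign \psi(t) = \sign(w_0 - \eta_1'(0))$ on all of $(0,\infty)$ whenever $w_0 \neq \eta_1'(0)$. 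The case $w_0 = \eta_1'(0)$ gives $\eta \equiv \eta_1$ by uniqueness of the second-order IVP, so $\eta(t_A) = 0$.

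The conclusion is then immediate: if $w_0 > \eta_1'(0)$, then $\eta(t) > \eta_1(t) \geq 0$ on $(0,\infty)$ with $\eta(t_A) > 0$, so $\eta > 0$ everywhere; if $w_0 \leq \eta_1'(0)$, then $\eta(t_A) \leq \eta_1(t_A) = 0$, forcing a first zero $t_c \in (0, t_A]$. The main obstacle will be justifying the limiting behavior near $t = 0$: the exponent $k\int_0^\epsilon s/q$ diverges logarithmically (since $q(t) \sim ks_0 t$ near zero gives $s/q \sim 1/(kt)$), but this divergence is exactly balanced by the linear vanishing $\psi(\epsilon) \sim (w_0 - \eta_1'(0))\epsilon$, so the product admits a well-defined sign in the limit, and this is what makes the slope condition sharp.
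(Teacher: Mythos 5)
Your proof is correct and follows essentially the same route as the paper: compare $\eta$ with $\eta_1$ by noting both satisfy \eqref{firstordereta} for $t>0$ (where $q>0$), so first-order uniqueness prevents crossing, and the ordering at small times is set by the initial slopes since $\eta(0)=\eta_1(0)$. You are somewhat more explicit than the paper's terse treatment about the mechanism near $t=0$ — in particular, noting that the logarithmic divergence of $\int s/q$ is offset by the linear vanishing of $\psi(\epsilon)$, which is exactly the right way to see why the slope condition is sharp at the degenerate point $q_0=0$.
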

Note that $\eta_{1}'(0)$ is in the limit sense in \eqref{c0eta1} since $q(0)=0$. We know that this limit exists because $\eta_{1}$ satisfies \eqref{etasecondorder} (with $c=0$), which is an inhomogeneous second order linear ODE with bounded coefficients. Also, $q_0=0$ implies $A_0<0$.
\begin{proof}
The proof of Proposition \ref{c0prop2} closely parallels the argument used in the second part of Proposition \ref{c0prop1}. A comparison principle applies because solutions to the ODE are unique.  Since both $\eta$ and $\eta_{1}$ satisfy \eqref{etasecondorder} and 
$$
\eta_0 = \eta_{1}(0) = -\frac{A_0}{ks_0},
$$
the condition $\eta'(0)=w_0>\eta_{1}'(0)$ ensures that  $\eta(t)>\eta_{1}(t)$ for all $t>0$. In particular $\eta_1(t) \geq 0$, so $\eta(t)$ stays positive for all time.

The converse also follows immediately: if $w_0 \leq \eta_{1}'(0)$, then $\eta(t) \leq \eta_{1}(t)$ for all $t$, and  since $\eta_{1}(t_A)= 0$, $\eta$ must vanish at some $t_c \leq t_A$.
\end{proof}

We now address the case  $q_0<0$. We take derivative of \eqref{c0eta1} and obtain two second order IVPs for $i=1,2$,
\begin{align}
\label{c0secondordereta1}
\begin{aligned}
& \eta_i'' + k(N-1)s\eta_i = k \Gamma^{N-1},\\
& \eta_i (t_A^i)=0,\qquad \eta_i'(t_A^i) = 0,
\end{aligned}
\end{align}
where each $t_A^i\geq 0$ satisfies $A(t_A^i)=0$. There may be only one such time; in that case, we only consider $\eta_1$. Note that by uniqueness of solutions, $\eta_1$ defined in \eqref{c0eta1} is the same function as $\eta_1$ introduced above. As in \eqref{trajectoryN}, we obtain the trajectory for this case as,
\begin{align}
\label{c0trajectoryN}
& q^2 s^{-\frac{2}{N}} + \frac{2k}{N-2}s^{1-\frac{2}{N}} = R_N,
\end{align}
From this relation, one sees directly that the maximum of $s(t)$ occurs when $q=0$. Consequently, 
\begin{align}
\label{c0smaxexp}
& s_{max} := \left( \frac{R_N(N-2)}{2k} \right)^{\frac{N}{N-2}}.
\end{align}

\begin{proposition}
\label{c0prop3}
Consider the case $N\geq 3$. Suppose $s_0>0$ and $q_0<0$, and let $\eta_0>0$. Then: 
\begin{itemize}
    \item If $A_0\geq 0$, then $\eta(t)>0$ for all $t>0$ if and only if 
    $$
    \kappa < (s_{max}/s_0)^{1/N} \quad \text{and}\quad  \eta_1(0)<\eta_0 < \eta_2(0).
    $$
    \item If $-\frac{k}{N-2} < A_0 <0$,  then $\eta(t)>0$ for all $t>0$ if and only if 
    $$
    \eta_0<\eta_1(0).
    $$
\end{itemize}
\end{proposition}

\begin{proof}
Suppose $A_0\geq 0$. From the first assertion of Proposition \ref{tanconv} and \eqref{Aexp}, we conclude that at $t=t_q$,
$$
\eta(t_q) = -\frac{A(t_q)}{ks(t_q)} = -\frac{A(t_q)}{ks_{max}}.
$$
Therefore, a necessary condition for $\eta$ to be positive is that $A(t_q)<0$. From \eqref{expqex} and \eqref{Aeq}, the condition is equivalent to,
$$
\kappa < \left( \frac{s_{max}}{s_0}\right)^{\frac{1}{N}}.
$$ 
Here, we must keep in mind that the dynamics of $s$ of  is such that increases until $t=t_q$ then decreases monotonically and approaches zero as $t\to\infty$. If the above inequality holds, then there are two positive times, $t_A^i,i=1,2$, when $A(t_A^i)=0$. Also, $t_q\in (t_A^1,t_A^2)$. The very same arguments as in Proposition \ref{propetaipos} allow us to conclude that $\eta_i(t)>0$ for $t\in[0,\infty)\backslash\{t_A^i\}$. 
Since $\eta_i$'s also satisfy \eqref{c0eta1}, the all-time-positivity of $\eta$ is guaranteed once again by arguments as in Lemma \ref{lemequivalence} if it was in between $\eta_1$ and $\eta_2$ at $t=0$. In particular, assertions 1-4 of Lemma \ref{lemequivalence} are valid with a slight modification that the set $\mathbb{D} = \{t_q\}$ has only one element, see Figure \ref{c0fig2} for a visualization.
Conversely, if $\eta_0\notin (\eta_1(0),\eta_2(0))$, then from Lemma \ref{lemequivalence} $\eta$ becomes zero in finite time.  

If $-\frac{k}{N-2} < A_0 <0$, then $\kappa <1$. From Proposition \ref{tanconv}, $s$ increases to a maximum and then decreases to zero. Once again, making use of the relation \eqref{expqex} and \eqref{Aeq}, we have that $A$ is zero only once, at $t=t_A$. From \eqref{c0eta1}, we have,
$$
\left( \eta_1 e^{-k\int^t\frac{s}{q}} \right)' = \frac{A}{q} e^{-k\int^t\frac{s}{q}}.
$$
We conclude that for $t<t_q$, $\eta(t)>0$ since $A,q$ are both negative in this interval. Since from Lemma \ref{lemequivalence}, $\eta(t)>\eta_1(t)$ for $t>t_q$, we conclude that $\eta(t)>0$ for all $t>t_q$ since $\eta_1$ is nonnegative and serves as a lower bound for $\eta$ in this domain. Conversely, if $\eta_0 >\eta_1(0)$, then $\eta(t)<\eta_1(t)$ for all $t>t_q$ and hence, it must be zero in a finite time, $t<t_A$, since $\eta_1(t_A)=0$. 
\end{proof}

\begin{figure}[h!] 
\centering
\input{TikzCode/Fig-ZeroBack2.tex}
\caption{$\eta_1,\eta_2$ and a plausible solution, $\eta$, with $k=1,N=5,q_0=-2,s_0=1,A_0=1$}
\label{c0fig2}
\end{figure}

\begin{proposition}
\label{c0prop4}
Let $N\geq 3$ and $s_0>0$ with $q_0$ arbitrary. If 
$$
A_0 = -\frac{k}{N-2},
$$
then there exists a time $t_c>0$ such that $\eta(t_c) = 0$.
\end{proposition}
We state this proposition and its proof separately because the technique used in the proof of Proposition \ref{c0ftb} does not apply. In fact, it turns out that it is much easier to work in the $p,\rho$ variables instead of the $\eta,w$ variables.

\textit{Proof of Proposition \ref{c0prop4}:} Using \eqref{Aexp} and $A_0=-k/(N-2)$ in the expression of $A$ as in \eqref{Aeq}, we obtain,
$$
q(t)w(t) - k\eta(t)s(t) = A(t) = -\frac{k}{N-2} \Gamma^{N-1} .
$$
Substituting for $\eta,w$ using \eqref{etawtransf}, we obtain,
$$
\frac{qp-ks}{\rho} = -\frac{k}{N-2}.
$$
As a result, we can find $p$ in terms of the other variables as,
$$
p = \frac{ks}{q} - \frac{k\rho}{(N-2)q}.
$$
We can divide by $q$ because from \eqref{qconvrate}, $q$ is eventually positive and decays accordingly. We will consider sufficiently large times so that this holds. Plugging this in the ODE of $\rho$, \eqref{rhoode2}, we get,
$$
\rho' = \frac{k}{q(N-2)}\rho^2 - \rho\left( q(N-1) + \frac{ks}{q}\right).
$$
For all sufficiently large times, the rates in Proposition \ref{tanconv} hold. We can assume $\rho$ has not already blown up, because if it has then the proof is done. Using these rates, we conclude that $ks/q$, $q$ are bounded. Therefore, there occurs a Riccati-type blow up of density, and the blow up is aggravated by the $q$ in the denominator.    
\qed\\

Propositions \ref{c0ftb}-\ref{c0prop4} 
enable us to assemble the complete picture for the case $N\geq 3$. 
We now turn to the critical case $N=2$.In what follows, we omit arguments that repeat those already presented.

Recall the quantity $B$ defined in \eqref{A2eq}, which plays a role  analogous to $A$. Note that there always exists a positive root of $B(\Gamma)$, 
$$
\kappa = e^{\frac{A_0}{k}}, 
$$
no matter the sign of $A_0$. This differs from the situation for $A$. 
The value of $\kappa$ may lie on either side of $1$, or equal to $1$,  depending on the sign of $A_0$. For the case $N=2$, we consider the function $\eta_{1}$ defined in \eqref{c0eta1} and the functions $\eta_i$'s defined in \eqref{c0secondordereta1},  with $A$ replaced by $B$ in their definitions.  

\begin{proposition}
\label{c0n2prop1}
Suppose $N=2$ and consider $\eta_{1}$ as in \eqref{c0eta1}. Assume  $s_0>0$ and $q_0>0$. Given $\eta_0>0$, we have $\eta(t)>0$ for all $t>0$ provided one of the following conditions is satisfied,
\begin{itemize}
    \item $A_0\geq 0$;
    \item $A_0 < 0$ and $\eta_0>\eta_{1}(0)$.
\end{itemize}
Moreover, if $A_0 < 0$ and $\eta_0\leq \eta_{1}(0)$, then there exists a time $t_c>0$ such that $\eta(t_c)=0$.
\end{proposition}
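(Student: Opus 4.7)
The plan is to follow the same blueprint as in Proposition \ref{c0prop1}, replacing $A$ by $B$ and carefully redoing the sign analysis, since for $N=2$ the structure of the zero set of $B(\Gamma)=(A_0-k\ln\Gamma)\Gamma$ differs from that of $A(\Gamma)$. Under the hypotheses $s_0>0$, $q_0>0$, Proposition \ref{tanconv} gives $q(t)>0$ and $s(t)>0$ for all $t>0$ with $q(t), s(t)\to 0$. Consequently $\Gamma(t)=e^{-\int_0^t q}$ is strictly decreasing from $\Gamma(0)=1$ down to $0$, and the expression \eqref{A2eq} shows $B(\Gamma)$ has the unique positive root $\kappa=e^{A_0/k}$, with $B(\Gamma)>0$ for $\Gamma\in(0,\kappa)$ and $B(\Gamma)<0$ for $\Gamma>\kappa$.

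First I would treat $A_0\ge 0$, so $\kappa\ge 1$. Since $\Gamma(t)<1\le\kappa$ for every $t>0$, one has $B(t)>0$ throughout. The solution $\eta$ satisfies the $N=2$ analogue of \eqref{etaexpode},
\begin{equation*}
\left(\eta\, e^{-k\int^{t}s/q}\right)'=\frac{B}{q}\,e^{-k\int^{t}s/q},
\end{equation*}
and because the right-hand side is positive and $\eta_0>0$, the quantity in parentheses stays strictly positive, so $\eta(t)>0$ for all $t>0$.

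Next, for $A_0<0$ we have $\kappa\in(0,1)$, and the monotone decrease of $\Gamma$ yields a unique $t_B>0$ with $\Gamma(t_B)=\kappa$, i.e.\ $B(t_B)=0$; moreover $B(t)<0$ on $(0,t_B)$ and $B(t)>0$ on $(t_B,\infty)$. Define $\eta_1$ as in \eqref{c0eta1} (with $A$ replaced by $B$ and $t_A$ replaced by $t_B$). Solving $\eta_1$ by the integrating factor $e^{-k\int_{t_B}^{t}s/q}$ against the right-hand side $B/q$, and observing that $B$ and $q$ have the same sign structure as in the $N\ge 3$ case on either side of $t_B$, one checks that $\eta_1(t)>0$ for $t\ne t_B$ and $\eta_1(t_B)=0$, with $\eta_1'(t_B)=0$ and $\eta_1''(t_B)=k\Gamma(t_B)>0$ from the second-order form \eqref{c0secondordereta1}. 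Since $\eta$ and $\eta_1$ both satisfy the first-order linear ODE \eqref{firstordereta} with the same coefficients on all of $(0,\infty)$ (as $q$ never vanishes), their difference obeys
\begin{equation*}
(\eta-\eta_1)'=\frac{ks}{q}(\eta-\eta_1),
\end{equation*}
so $\eta(t)-\eta_1(t)$ preserves sign. If $\eta_0>\eta_1(0)$, then $\eta(t)>\eta_1(t)\ge 0$ for all $t>0$, giving global positivity. If $\eta_0<\eta_1(0)$, then $\eta(t)<\eta_1(t)$ for all $t$ where $\eta$ exists; in particular $\eta(t_B)<\eta_1(t_B)=0$, so by continuity there exists $t_c\in(0,t_B)$ with $\eta(t_c)=0$. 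If $\eta_0=\eta_1(0)$ then $\eta\equiv\eta_1$ and we may take $t_c=t_B$.

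No step poses a real obstacle: the mechanism is the same comparison/integrating-factor argument as for $N\ge 3$, and the only place one must be attentive is the sign analysis of $B(\Gamma)$, which in the $N=2$ regime always produces a single positive root $\kappa=e^{A_0/k}$ lying in $(0,1)$ precisely when $A_0<0$. This is also why the flat cutoff $-k/(N-2)$ from the $N\ge 3$ case disappears: for $N=2$ every negative $A_0$ yields a finite $t_B$ at which the subcritical/supercritical dichotomy is decided by comparison with $\eta_1(0)$.
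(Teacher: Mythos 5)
Your proof is correct and takes essentially the same route the paper intends: the paper simply remarks that the proof is very similar to that of Proposition \ref{c0prop1}, and you have faithfully transcribed that argument to the $N=2$ setting, using $B(\Gamma)=(A_0-k\ln\Gamma)\Gamma$ in place of $A(\Gamma)$, the monotone decrease of $\Gamma$, the unique zero $\kappa=e^{A_0/k}$, and the comparison principle $(\eta-\eta_1)'=\frac{ks}{q}(\eta-\eta_1)$ together with the lower-bound function $\eta_1$. The observations you make — that $\kappa\geq1$ when $A_0\geq0$ so $B$ stays positive, that for $A_0<0$ the single sign change of $B$ occurs at a finite $t_B$ with $\eta_1(t_B)=\eta_1'(t_B)=0$ and $\eta_1''(t_B)=k\Gamma(t_B)>0$ by the $N=2$ second-order equation, and that the absence of the $-k/(N-2)$ cutoff is precisely because $B(\Gamma)$ always has a positive root — are exactly what the paper relies on.
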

The proof is almost identical to that of Proposition \ref{c0prop1}.
\begin{proposition}
\label{c0n2prop2}
Consider the case $N=2$. Suppose $s_0>0$ and $q_0=0$. Given $\eta_0>0$, we have $\eta(t)>0$ for all $t>0$ if and only if 
$$
w_0>\eta_{1}'(0).
$$
\end{proposition}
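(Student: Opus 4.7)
The proof will be a direct adaptation of the argument behind Proposition \ref{c0prop2}, with $A$ replaced by $B$ from \eqref{A2eq} and the second--order ODE \eqref{etasecondorder} specialized to $N=2$, $c=0$. The starting observation is that $q_0=0$ together with \eqref{Aexp} forces $A_0=-k\eta_0 s_0<0$, so $\kappa:=e^{A_0/k}\in(0,1)$. Since $s_0>0$ implies $s,\Gamma$ stay positive and, via $q'(0)=ks_0>0$ and \eqref{qode2}, $q(t)>0$ for all $t>0$, the function $\Gamma(t)=(s(t)/s_0)^{1/2}$ is strictly decreasing from $1$ to $0$ (using Lemma \ref{tildesexpq} and Proposition \ref{tanconv}). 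Therefore there is a unique $t_B>0$ with $\Gamma(t_B)=\kappa$, equivalently $B(t_B)=0$.

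The key preliminary step is to show $\eta_1(0)=\eta_0$. I would verify that the function $\eta_1$ defined by the second--order IVP \eqref{c0secondordereta1} (with $B$ in place of $A$) also satisfies the first--order ODE \eqref{c0eta1}. Define $f:=q\eta_1'-ks\eta_1-B$; a direct computation using \eqref{qode2}, \eqref{sode2}, \eqref{c0secondordereta1} and the $N=2$ version of \eqref{Aode} (for $B$) yields $f'=-qf$, while $f(t_B)=0$ by the initial data at $t_B$. Hence $f\equiv 0$. Evaluating $q\eta_1'=B+ks\eta_1$ at $t=0$, where $q_0=0$, gives $\eta_1(0)=-A_0/(ks_0)=\eta_0$.

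Now both $\eta$ and $\eta_1$ solve the same second--order linear equation \eqref{etasecondorder} with $c=0$, $N=2$, and share the initial value $\eta_0$. On $(0,\infty)$, where $q>0$, both satisfy the (now regular) first--order ODE $q\tilde\eta'=B+ks\tilde\eta$, so the difference $u:=\eta-\eta_1$ solves the homogeneous equation $qu'=ksu$. Consequently, for any $t_1>0$,
\begin{equation*}
u(t)=u(t_1)\exp\!\left(k\int_{t_1}^{t}\frac{s(\tau)}{q(\tau)}\,d\tau\right),\qquad t>0,
\end{equation*}
so $u$ retains a constant sign on $(0,\infty)$, determined by the sign of $u'(0)=w_0-\eta_1'(0)$ (which exists as a limit because $\eta_1$ solves the regular second--order IVP). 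If $w_0>\eta_1'(0)$ then $u>0$ on $(0,\infty)$, giving $\eta(t)>\eta_1(t)\ge 0$ for all $t>0$. If $w_0\le\eta_1'(0)$ then $u\le 0$ on $(0,\infty)$; since $\eta_1(t_B)=0$ (in the equality case $\eta\equiv\eta_1$), we get $\eta(t_B)\le 0$, and because $\eta(0)=\eta_0>0$ continuity yields some $t_c\in(0,t_B]$ with $\eta(t_c)=0$.

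The only substantive obstacle is the identification $\eta_1(0)=\eta_0$, which relies on extracting information from the singular first--order ODE \eqref{c0eta1} at $q=0$; everything else is a uniqueness/comparison argument for the regular second--order equation \eqref{etasecondorder} and a monotone chase of $\Gamma$ down to $\kappa$. Once $\eta_1(0)=\eta_0$ is established, the criterion $w_0\gtrless\eta_1'(0)$ is exactly the dichotomy that determines the sign of $u$ on $(0,\infty)$, hence the global positivity (or finite--time vanishing) of $\eta$.
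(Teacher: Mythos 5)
Your proof is correct and takes essentially the same route as the paper: the paper simply invokes $\eta_0=\eta_1(0)=-A_0/(ks_0)$ (which you derive cleanly via the auxiliary quantity $f=q\eta_1'-ks\eta_1-B$ and the $N=2$ analogue of \eqref{Aode}), and then applies the same first-order comparison $(\eta-\eta_1)'=\frac{ks}{q}(\eta-\eta_1)$ together with $\eta_1\geq0$, $\eta_1(t_B)=0$ to get the dichotomy.
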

The proof follows the same argument as Proposition \ref{c0prop2}.

\begin{proposition}
\label{c0n2prop3}
Consider the case $N=2$. Suppose $s_0>0$ and $q_0<0$. Given $\eta_0>0$, we have the following:
\begin{itemize}
    \item If $A_0\geq 0$:  $\eta(t)>0$ for all $t>0$ if and only if 
    $$
    e^{\frac{A_0}{k}} < \sqrt{(s_{2,max}/s_0)} \quad \text{and}\quad  \eta_1(0)<\eta_0 < \eta_2(0).
    $$
    \item If $A_0 <0$:  $\eta(t)>0$ for all $t>0$ if and only if $$
    \eta_0<\eta_1(0).
    $$
\end{itemize}
\end{proposition}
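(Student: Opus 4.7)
The plan is to carry over the proof of Proposition \ref{c0prop3} with the $N=2$ conserved quantity $B$ from \eqref{A2eq} in place of $A$. Recall $B(\Gamma)=(A_0-k\ln\Gamma)\Gamma$ has a unique positive root $\kappa=e^{A_0/k}$, with $B>0$ on $(0,\kappa)$ and $B<0$ on $(\kappa,\infty)$. By Proposition \ref{tanconv}, $q_0<0$ produces a unique $t_q>0$ with $q(t_q)=0$ and $s(t_q)=s_{2,max}$, and by \eqref{expqex} (with $c=0$), $\Gamma(t)=\sqrt{s(t)/s_0}$ rises strictly from $1$ to $\sqrt{s_{2,max}/s_0}$ on $[0,t_q]$ and then decays monotonically to $0$; consequently the crossing set of any two solutions of \eqref{firstordereta} (with $B$) is $\mathbb{D}=\{t_q\}$, and the one-point adaptation of Lemma \ref{lemequivalence} applies.

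For the case $A_0\geq 0$ (so $\kappa\geq 1$), I would first handle the subcase $e^{A_0/k}\geq\sqrt{s_{2,max}/s_0}$: here $\Gamma(t)\leq\kappa$ throughout, so $B(t)\geq 0$ everywhere, and evaluating $B=qw-k\eta s$ at $t_q$ yields $\eta(t_q)=-B(t_q)/(ks_{2,max})\leq 0$, ruling out all-time positivity. For the remaining subcase $e^{A_0/k}<\sqrt{s_{2,max}/s_0}$, $B$ has exactly two zeros $t_A^1\in[0,t_q)$ and $t_A^2\in(t_q,\infty)$ with $B<0$ on $(t_A^1,t_A^2)$ and $B>0$ outside. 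I would define $\eta_1,\eta_2$ via \eqref{c0secondordereta1} (with $B$) and rerun the open--closed argument of Proposition \ref{propetaipos}: the ansatz $g(t)=1/(2s_0\Gamma)$ satisfies $g''+ksg=k\Gamma$ and is strictly positive, giving nonemptiness of the set of $A_0$ admitting a positive solution; openness follows from continuous dependence; closedness comes from the fact that a nonnegative but non-strictly-positive limit solution would have to coincide with $\eta_1$ or $\eta_2$, while the nontrivial forcing $k\Gamma$ in \eqref{c0secondordereta1} precludes $\eta_1\equiv\eta_2$. Then $\eta_i>0$ off $\{t_A^i\}$, $\eta_1\neq\eta_2$, and the adapted Lemma \ref{lemequivalence} gives $\eta_1(0)<\eta_0<\eta_2(0)$ iff $\eta$ is strictly sandwiched between $\eta_1$ and $\eta_2$ for all $t$, iff $\eta(t)>0$ for all $t>0$.

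For the case $A_0<0$ (so $\kappa<1$), the monotone-then-monotone geometry of $\Gamma$ gives a unique $t_A>t_q$ with $\Gamma(t_A)=\kappa$, and $B<0$ on $[0,t_A)$, $B>0$ on $(t_A,\infty)$. Define $\eta_1$ by \eqref{c0secondordereta1}: $\eta_1(t_A)=\eta_1'(t_A)=0$. I would establish $\eta_1\geq 0$ (with equality only at $t_A$) by multiplying \eqref{c0eta1} by $e^{-k\int s/q}$ and integrating forward on $[0,t_q)$ (where $B<0$ and $q<0$, so $B/q>0$) and backward from $t_A$ on $(t_q,t_A)$ (where $B<0$ and $q>0$, so $B/q<0$), matching across $t_q$ via $\eta_1(t_q)=-B(t_q)/(ks(t_q))>0$. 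Comparing $\eta$ and $\eta_1$ (both satisfying \eqref{firstordereta} with $B$, meeting only at $t_q$ by the adapted Lemma \ref{lemequivalence}) then yields: $\eta_0<\eta_1(0)$ forces $\eta<\eta_1$ on $(0,t_q)$ and $\eta>\eta_1\geq 0$ on $(t_q,\infty)$, so $\eta>0$ throughout; conversely $\eta_0\geq\eta_1(0)$ forces $\eta\leq\eta_1$ on $(t_q,\infty)$ and hence $\eta(t_A)\leq 0$, so $\eta$ vanishes by some $t\leq t_A$.

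The main obstacle will be reestablishing the positivity and distinctness of $\eta_1,\eta_2$ without the Floquet/periodicity machinery of Proposition \ref{etaiperiod}, which is unavailable when $c=0$; instead the argument must rest on the explicit positive ansatz $g\propto\Gamma^{-1}$, the single-hump shape of $\Gamma$ about $t_q$, and careful sign analysis of $B/q$ on each monotone piece of the $q$--$s$ trajectory.
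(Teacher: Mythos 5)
Your proposal follows exactly the route the paper intends: the paper proves Proposition \ref{c0prop3} in detail for $N\ge 3$ and simply says Proposition \ref{c0n2prop3} is ``very similar'' with $A$ replaced by $B$ and $s_{2,max}=e^{R_2/k}$; you execute precisely that replacement, and your sign bookkeeping (where $B$ and $q$ vanish and change sign, the single crossing set $\mathbb{D}=\{t_q\}$, the necessity of $\kappa<\sqrt{s_{2,max}/s_0}$ via $\eta(t_q)=-B(t_q)/(ks_{2,max})$, the forward/backward integrating-factor argument for the $A_0<0$ case) is all correct. You also correctly identify, more explicitly than the paper does, that the Floquet machinery of Proposition \ref{etaiperiod} is unavailable once $c=0$, and you propose the right surrogate: the explicit positive solution $g=1/(2s_0\Gamma)$ of $g''+ksg=k\Gamma$ as the nonemptiness witness in the open–closed argument, which is the same ansatz $\tilde s^{-1/N}/(N\tilde s_0^{1-1/N})$ used in Claim~1 of Proposition \ref{propetaipos}, specialized to $c=0$, $N=2$.

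The one step I would press you on is the closedness claim, specifically the assertion that ``the nontrivial forcing $k\Gamma$ precludes $\eta_1\equiv\eta_2$.'' As stated this is not a proof: a forced linear ODE can certainly have a solution that, together with its derivative, vanishes at two prescribed points, so one needs a structural reason. In the $c>0$ proof the paper rules this out via a genuine periodicity contradiction ($\eta_1\equiv\eta_2$ would force period $t_A^2-t_A^1<T$); that argument has no direct analogue when $c=0$ and the $q$--$s$ orbit is a single hump decaying to the origin. Note, though, that the proposition itself survives even if $\eta_1\equiv\eta_2$ for some exceptional $a$ (the condition $\eta_1(0)<\eta_0<\eta_2(0)$ is then vacuous and the ``only if'' direction is still true, since every solution is forced through the common zero), so the gap is about nontriviality of the subcritical interval rather than correctness of the biconditional. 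Because the paper itself only gestures at ``the very same arguments as in Proposition \ref{propetaipos}'' here, this is a shared gap; but in a self-contained write-up you should either supply a genuine argument (for instance, tracking the sign of the homogeneous quantity $q\phi'-k\phi s$ for $\phi:=g-\eta_i$, or arguing via the affine maps $\eta_0\mapsto\eta(t_A^1)$ increasing and $\eta_0\mapsto\eta(t_A^2)$ decreasing to show the admissible interval has positive length) or explicitly acknowledge that the degenerate $\eta_1\equiv\eta_2$ case is harmless.
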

Again, the proof parallels that of Proposition \ref{c0prop3}.

The quantity $s_{2,max}$ differs from $s_{max}$ because the trajectory equation takes a different form in the critical case. Similar to the way we obtained \eqref{c0trajectoryN}, we obtain the trajectory for  $N=2$,
\begin{align}
\label{c0trajectory2}
& \frac{q^2}{s}+ k\ln(s) = R_2, 
\end{align}
where 
$$
R_2 = \frac{q_0^2}{s_0} + k\ln(s_0).
$$
Consequently,
$$
s_{2,max} = e^{\frac{R_2}{k}}.
$$
Finally, we examine the case of vanishing initial density, $\rho_0 = 0$. From \eqref{rhoode2}, the condition $\rho_0=0$ implies $\rho\equiv 0$, as long as the solution $p$ of \eqref{pode2} exists. In this situation, \eqref{pode2} reduces to,
\begin{align}
\label{rho0pode2}
p' = -p^2 - k(N-1)s.
\end{align}

\begin{proposition}
\label{proprho0}
Consider system \eqref{fullodesys2} and suppose $\rho_0 = 0$ in \eqref{rhoode2}. Then $p$ remains bounded for all time if and only if 
$$
q_0>0 \quad \text{and} \quad p_0\geq \frac{ks_0}{q_0}.
$$
Furthermore, if these conditions hold, then 
$$
p(t)\geq \frac{ks(t)}{q(t)},\quad t>0.
$$
On the other hand, if $q_0\leq 0$ or $p_0< \frac{ks_0}{q_0}$, then there exists a time $t_c>0$ such that,
$$
\lim_{t\to t_c^-}p(t) = -\infty.
$$
\end{proposition}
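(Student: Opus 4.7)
My plan rests on two structural observations. First, the function $p^*(t) := ks(t)/q(t)$ is itself a particular solution of the reduced ODE $p' = -p^2 - k(N-1)s$ whenever $q(t)\neq 0$; this follows by direct differentiation using \eqref{qode2} and \eqref{sode2}. Second, the auxiliary quantity $Q := pq - ks$ satisfies the first-order linear ODE $Q' = -(p+q)Q$, so $Q$ preserves its sign along the life of $p$, and for $q_0 > 0$ the condition $p_0 \geq ks_0/q_0$ is exactly $Q(0)\geq 0$.

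For sufficiency, suppose $q_0>0$ and $p_0 \geq ks_0/q_0$, i.e.\ $Q(0)\geq 0$. Then $Q(t)\geq 0$ for all $t > 0$, and since $q(t) > 0$ by Proposition \ref{tanconv}, this rearranges to the claimed lower bound $p(t) \geq ks(t)/q(t)$. The right-hand side is itself bounded (in fact tends to zero) by the decay rates \eqref{qsconvrate}, while $p' = -p^2 - k(N-1)s < 0$ gives $p(t) \leq p_0$, so $p$ is uniformly bounded.

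For necessity when $q_0>0$ and $p_0 < ks_0/q_0$, set $v := p^* - p > 0$. Since $p$ and $p^*$ satisfy the same Riccati-type ODE, direct subtraction together with the identity $(p^*)^2 - p^2 = (p^*-p)(p^*+p)$ produces the Bernoulli equation $v' = v^2 - 2(ks/q)\,v$. The substitution $u := 1/v$ linearizes this to $u' - (2ks/q)u = -1$. Using $ks/q = q'/q + q$ (from \eqref{qode2}) and $\int_0^t q\,d\tau = -\tfrac{1}{N}\ln(s(t)/s_0)$ (from \eqref{sode2}), the integrating factor is
$$
\mu(t) := \exp\!\Bigl(-2\!\int_0^t \tfrac{ks}{q}\,d\tau\Bigr)
       = \Bigl(\tfrac{q_0}{q(t)}\Bigr)^{\!2}\Bigl(\tfrac{s(t)}{s_0}\Bigr)^{\!2/N},
$$
and the explicit solution is $u(t)\mu(t) = u(0) - \int_0^t \mu(\tau)\,d\tau$. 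Thus $u$ reaches zero at the first $t_c$ with $\int_0^{t_c}\mu = u(0)$; at this time $v(t_c) = +\infty$ and consequently $p(t_c) = p^*(t_c) - v(t_c) = -\infty$. Finiteness of $t_c$ reduces to $\int_0^\infty \mu = \infty$, and inserting the asymptotics of Proposition \ref{tanconv} shows $\mu$ tends to a positive constant for $N \geq 3$ and behaves like $1/\ln(t+1)$ for $N = 2$, so the integral indeed diverges in both cases.

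The remaining initial configurations reduce to the above. If $q_0=0$, then $q'(0)=ks_0>0$ drives $q$ positive for small $t>0$ and $p^*(t)\to +\infty$ as $t\to 0^+$, so for any small $t_0>0$ we have $v(t_0) = p^*(t_0) - p(t_0) > 0$ and the Bernoulli argument applies from $t_0$. If $q_0<0$, let $t_q$ be the first zero of $q$ (Proposition \ref{tanconv}). When $Q(0)>0$ one has $p_0 < ks_0/q_0 < 0$; the elementary Riccati bound $p' \leq -p^2$ for $p<0$ then forces blow-up by time $-1/p_0$. When $Q(0)\leq 0$, either $p$ blows up in $[0,t_q]$ directly, or $p(t_q)$ is finite, in which case $Q(t_q) = -ks(t_q) < 0$ and a restart of the Bernoulli argument just past $t_q$ (where $p^*$ blows up to $+\infty$) yields blow-up in finite time. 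The main obstacle in this plan is the Bernoulli reduction together with verifying $\int_0^\infty \mu = \infty$ uniformly in $N$: the asymmetry between the polynomial decay in the $N\geq 3$ regime and the logarithmic correction in the critical $N=2$ regime of \eqref{qsconvrate} needs careful bookkeeping.
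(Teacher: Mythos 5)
Your proof is correct but takes a genuinely different route to the blow-up. Both you and the paper work from $Q := qp - ks$ with $Q' = -(p+q)Q$ and the decay rates in \eqref{qsconvrate}; the sufficiency direction and the splitting over the sign of $q_0$ are essentially identical. The divergence is in showing $p\to-\infty$ when $Q$ has the wrong sign. The paper exponentiates to $Q(t) = Q(0)\,e^{-\int_0^t q}e^{-\int_0^t p}$, supposes $p$ stays finite, deduces two-sided bounds on $p$, and obtains a contradiction because $Q$ must then decay like $(1+t)^{-N}$ while the explicit formula only allows $(1+t)^{-1}$; the $N=2$ case requires a separate estimate via the exponential integral $E_1$. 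You instead note that $p^* := ks/q$ is a particular solution of the Riccati equation $p' = -p^2 - k(N-1)s$ obtained from \eqref{pode2} with $\rho\equiv 0$, so $v := p^* - p$ solves a Bernoulli equation whose linearization $u = 1/v$ gives the closed formula $u(t)\mu(t) = u(0) - \int_0^t\mu$, with $\mu = (q_0/q)^2(s/s_0)^{2/N}$. Blow-up then reduces to the single criterion $\int_0^\infty\mu = \infty$, which follows directly from \eqref{qsconvrate}: $\mu$ tends to a positive constant for $N\geq 3$ and decays like $1/\ln(1+t)$ for $N=2$, and both integrals diverge. Your approach is more constructive (it yields an explicit upper bound on the blow-up time) and handles the two dimensional regimes uniformly, avoiding the exponential-integral machinery the paper needs for $N=2$; the handling of $q_0\leq 0$ (Riccati bound when $p<0$, restart past the unique zero $t_q$ of $q$ otherwise) mirrors the paper's strategy of waiting until $q>0$.
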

\begin{proof} 
Consider the quantity $qp-ks$. From \eqref{rho0pode2}, \eqref{qode2} and \eqref{sode2}, we have,
\begin{align*}
(qp-ks)' & = p(ks-q^2) - q(p^2+k(N-1)s) + kNsq\\
& = kps - pq^2 -qp^2 + kqs\\
& = -(p+q)(qp- ks).
\end{align*}
Consequently, as long as $p$ exists, $qp-ks$ maintains sign and we have,
\begin{align}
\label{qpks}
& qp-ks = (q_0p_0-ks_0)e^{-\int_0^t q}e^{-\int_0^t p}.
\end{align}
{\bf Case 1:}  $q_0>0$ and $p_0\geq \frac{ks_0}{q_0}$. \\
By Proposition \ref{tanconv},  $q(t)>0$ for all $t$. Suppose,  for contradiction, that $p$ breaks down in finite time. Since $s(t)$ is uniformly bounded,  \eqref{rho0pode2} that at the time of breakdown, $p\to -\infty$. However, $q(t),s(t)$ and $e^{-\int_0^t}$ are uniformly bounded. Hence, before the breakdown time, the left-hand-side of \eqref{qpks} would have to become negative but the right-hand-side is non-negative. This is a contradiction and hence, $p(t)$ stays finite for all $t$. In particular,  
$$
p(t)\geq ks(t)/q(t),
$$
with the decay rate following from Proposition \ref{tanconv}.\\
{\bf Case 2:}  $q_0>0$ but $p_0 < \frac{ks_0}{q_0}$. \\
We first analyze the case $N\geq 3$; the critical case  $N=2$ needs separate treatment and will be addressed lateer. 

As long as $p$ remains finite, we have 
$$
k(N-1)\int_t^\infty s(\tau) d\tau \leq p(t)\leq \frac{ks(t)}{q(t)}.
$$
The right-hand inequality follows directly from  \eqref{qpks}.  By Proposition \ref{tanconv}, the integral on the left-hand-side is finite. 

To justify the left inequality, assume for contradiction that there exists $t_1$ such that 
$$
k(N-1)\int_{t_1}^\infty s(\tau)d\tau > p(t_1).
$$
Integrating \eqref{rho0pode2} from $t_1$ to some $t_2> t_1$ then gives 
$$
p(t_2) < p(t_1) - k(N-1)\int_{t_1}^{t_2} s(\tau)d\tau  < 0 .
$$
But once $p(t_2)<0$, equation \eqref{rho0pode2} implies a Riccati-type finite-time blowup, so that $p(t)\to -\infty$ for some  $t>t_2$.This completes the contradiction.

Therefore, we may assume that 
$$
k(N-1)\int_t^\infty s(\tau) d\tau \leq p(t) \quad \text{for}\quad  t>0.
$$
Choose $t_\ast$ large enough so that the convergence estimates of Proposition \ref{tanconv} apply. Using those estimates, we can rewrite the bounds on $p$ as 
\begin{align}
\label{pubbelow}
& 0 < C_1 (1+t)^{-(N-1)}\leq p(t) \leq C_2 (1+t)^{-(N-1)}, \quad t\geq t_\ast
\end{align}
where the positive constants $C_i's$  depend only on $s_0,q_0,t_\ast,k$, and their values may change from line to line.

From Proposition \ref{tanconv}, and \eqref{pubbelow}, we have
$$
0> qp - ks \geq C_1 (1+t)^{-N} - C_2 (1+t)^{-N} \geq -C_1 (1+t)^{-N}.
$$
We now examine the right-hand side of \eqref{qpks}: 
\begin{align*}
(q_0p_0-ks_0)e^{-\int_0^t q}e^{-\int_0^t p} & = -C_2 s^{\frac{1}{N}} e^{-\int_0^t p}\\
& \leq -C_2 (1+t)^{-1}.
\end{align*}
Here we used Lemma \ref{tildesexpq} for the equality. We used \eqref{pubbelow} to conclude that $p$ is integrable and hence, the inequality holds. 

Combining the above bounds for the two sides of \eqref{qpks}, we find that for sufficiently large $t$,
$$
qp-ks \geq -C_1(1+t)^{-N} > -C_2 (1+t)^{-1} \geq (q_0p_0-ks_0)e^{-\int_0^t q}e^{-\int_0^t p},
$$
which is impossible. a contradiction. Therefore, $p$ must blow up in finite time; that is, 
$$
\lim_{t\to t_c^-}p(t) = -\infty \quad \text{for some}\quad  t_c>0.
$$
{\bf The case $q_0\leq 0$.}\\
First observe that $p_0> 0$,  for is $p_0 \leq 0$ then a Riccati-type blowup occurs immediately.  Thus
$$
q_0 p_0 - ks_0 <0,
$$
and hence, $qp-ks<0$ for all $t$. From Proposition \ref{tanconv}, we know that $q(t)$ eventually becomes positive and follow the same  decay rates. Hence, the same contradiction argument applies,  and again $p$ blows up in finite time.\\
{\bf The case $N=2$}. \\
 As  in the case $N\geq 3$, it suffices to derive a contradiction under the assumption
  $$
  q_0>0 \quad \text{and}  \quad p_0<\frac{ks_0}{q_0}.
  $$
Assuming $p(t)$ exists for all time, we have
$$
k\int_t^\infty s(\tau)d\tau \leq p(t)\leq \frac{ks(t)}{q(t)}.
$$
For all $t\geq t_\ast$ ( with $t_\ast$ again chosen so that the decay rates of Proposition \ref{tanconv} hold), this becomes 
\begin{align}
\label{pbounds2woE}
C_1 \int_t^\infty (\tau+1)^{-2}(1+\ln(1+\tau))^{-1}d\tau \leq p(t) \leq C_2 (t+1)^{-1}(1+\ln(1+t))^{-1}.
\end{align} 
We focus on the integral on the left-hand side. With the substitution
$$
u=a+\ln(1+\tau), 
$$
the integral becomes
$$
C_1\int_{1+\ln(1+t))}^\infty \frac{e^{-\tau}}{\tau}d\tau.
$$
This expression can be represented using a well-known exponential integral function given by 
$$
E_1(t) = \int_t^\infty \frac{e^{-\tau}}{\tau}d\tau.
$$
Moreover, it has the following bounds (see \cite[Page 229]{AbSt}): 
$$
\frac{e^{-t}}{2}\ln\left( 1+\frac{2}{t} \right)< E_1(t) < e^{-t}\ln\left( 1+\frac{1}{t} \right).
$$
Using the lower bound above in \eqref{pbounds2woE}, we obtain 
\begin{align}
\label{pbounds2}
& \frac{C_1}{1+t}\ln\left( 1+\frac{2}{1+\ln(1+t)} \right)\leq p(t) \leq \frac{C_2}{1+t} (1+\ln(1+t))^{-1}, \quad t\geq t_\ast.
\end{align}
Therefore, 
\begin{align*}
0>qp - ks & \geq \frac{C_1}{(1+t)^2}\ln\left( 1+\frac{2}{1+\ln(1+t)} \right)- \frac{C_2}{(1+t)^2}\frac{1}{(1+\ln(1+t))}\\
& \geq -\frac{C_2}{(1+t)^2}\frac{1}{(1+\ln(1+t))},
\end{align*}
for all sufficiently large $t$. Next, consider the right-hand-side of \eqref{qpks} for $N=2$. 
By (\ref{Gammaexp}), 
$$
e^{-\int_0^t q}=\frac{\sqrt{s}}{\sqrt{s_0}}. 
$$
Thus, for $t\geq t_*$,  using  
\eqref{s2convrate} and the assumption $q_0p_0-ks_0<0$, we obtain 
$$
(q_0p_0-ks_0)e^{-\int_0^t q}= \frac{(q_0p_0-ks_0)}{\sqrt{s_0}}\sqrt{s} 
\leq  -\frac{C_1}{(1+t)\sqrt{1+\ln(1+t)}}.
$$
On the other hand, 
\begin{align*}
e^{-\int_0^t p} & = e^{-\int^{t_\ast}_0 p} e^{-\int_{t_\ast}^t p}\\
& \geq C_1  e^{-\int_{t_\ast}^t \frac{C_2}{1+\tau} (1+\ln(1+\tau))^{-1}d\tau}.
\end{align*} 
Since 
$$
\int_{t_*}^t \frac{1}{(1+\tau)(1+ \ln (1+\tau))}d\tau =
\ln(1+\ln(1+t)) -\ln(1+\ln(1+t_*)),
$$
we obtain (absorbing constants),
\begin{align*} 
& e^{-\int_0^t p}  \geq  C_1 e^{-C_2\ln(1+\ln(1+t))}\\
& = \frac{C_1}{(1+t)[1+\ln(1+t)]^{C_2+1/2}}.
\end{align*}
Hence,  substituting into \eqref{qpks} we find   
$$
qp - ks =(q_0p_0-ks_0)e^{-\int_0^t q}e^{-\int_0^t p}
\leq  - \frac{C_1}{(1+t)[1+\ln(1+t)]^{C_2+1/2}}.
$$
However, this contradicts the fact that for sufficiently large 
$t$,
$$
qp - ks \geq -\frac{C_2}{(1+t)^2(1+\ln(1+t))} > -\frac{C_1}{(1+t)[1+\ln(1+t)]^{C_2+1/2}}. 
$$
This contradiction completes the argument.
\end{proof}

\textit{Proof of Theorem \ref{c0ftbn}:} Suppose the hypothesis holds. Then by applying \eqref{etawtransf} to Proposition \ref{c0ftb}, we immediately obtain the finite-time blowup of the density whenever 
$$
A_0<-k/(N-2).
$$
If $A_0=-k/(N-2)$, the desired conclusion follows from Proposition \ref{c0prop4}. \qed \\

\textit{Proof of Theorem \ref{c0ctcn}:} Assume the initial data satisfy the hypotheses of the theorem. Along each  characteristic path \eqref{chpath}, this means that for all $\beta>0$,  
$$
(\beta,u_0(\beta),\phi_{0r}(\beta),u_{0r}(\beta),\rho_0(\beta))\in\Sigma_N\cup \{ (\beta,x,y,z,0): x>0, z\geq -ky/x \}.
$$
We analyze a single characteristic path and  simplify the notation by writing the initial data as $(\beta,u_0,\phi_{0r},u_{0r},\rho_0)$. Under the transformation \eqref{vartransf}, the unknowns become the variables of system \eqref{fullodesys2}, namely $(q,s,p,\rho)$. Global-in-time existence of these variables is equivalent to global-in-time existence of the original variables.

If $\rho(0) = 0$, Proposition \ref{proprho0} guarantees global-in-time existence of $p(t)$ and hence, the full solution exists for all time. 

{\bf Case 1:} $\rho(0) > 0$ \\ 
By Definition \ref{defsigman}, using the equivalence of $a$ with \eqref{Aexp}, we analyze conditions \eqref{aneg}- \eqref{apos2} individually. 
Suppose first that 
$$
A_0 \in \left( -\frac{k}{N-2}, 0\right).
$$
This ensures condition \eqref{aneg}. Rearranging \eqref{aneg} using the transformations \eqref{vartransf} and  \eqref{etawtransf}, we obtain for $x\neq 0$,
$$
\frac{\eta_0}{\beta q_0} < \frac{\eta_1(0)}{\beta q_0},
$$
while for $x=0$,
$$
w_0>\frac{d\eta_1 (0)}{dt}.
$$
Since $x=0$ is equivalent to $q_0=0$, the first inequality is precisely the hypothesis of the second assertion of Proposition \ref{c0prop1} or the second assertion of Proposition \ref{c0prop3} (depending on the sign of  $q_0$). The second inequality is the hypothesis to Proposition \ref{c0prop2}. As a result, by \eqref{etawtransf}, the variables $\rho,p$ in \eqref{rhoode2}-\eqref{pode2} exists for all time. 

{\bf Case 2:  $A_0=0$}\\ 
In this case condition \eqref{azero} reduces to 
$$
\eta_0<\eta_2(0) \quad \text{for}\quad  q_0<0,
$$
and imposes no additional constraints when $q_0>0$. Note that $q_0$ cannot be equal to zero because that would be a violation to $A_0=0$. These two scenarios correspond exactly to the hypothesis of the first assertions of Proposition \ref{c0prop1} and Proposition \ref{c0prop3}. Thus $\rho,p$ exists globally. 

{\bf Case 3:} 
$$
A_0 \in \left( 0, \frac{k}{N-2}\left( \left( \frac{-\beta y_{\mathfrak{M},N}}{y} \right)^{1-\frac{2}{N}} -1 \right) \right).
$$
Using \eqref{c0smaxexp} with $y_{\mathfrak{M},N}=s_{max}$ and rearranging gives,
$$
\kappa \in \left( 1, \left( \frac{s_{max}}{s_0} \right)^{\frac{1}{N}} \right),
$$
with $\kappa$ as defined in \eqref{kappaexp}. Note that the quantity $y^M$, which 
originally appeared implicitly in the definition of $\Sigma_N$, is now given explicitly by \eqref{c0smaxexp}. In particular, it depends only on $q_0$ and $s_0$.  Since $A_0>0$, formula \eqref{kappaexp} directly shows that $\kappa>1$. 

From condition \eqref{apos1}, we obtain,
$$
\eta_1(0)< \eta_0 < \eta_2(0),
$$
when $q_0 < 0$, and no additional restrictions when $q_0>0$. Note that $q_0=0$ is incompatible with $A_0>0$. Therefore, global-in-time existence of $\rho,p$ follows from the first assertion of Proposition \ref{c0prop3} (for $q_0<0$) and the first assertion of Proposition \ref{c0prop1} (for $q_0>0$).

{\bf Case 4:} \\
$$
A_0 \geq  \frac{k}{N-2}\left( \left( \frac{-\beta y_{\mathfrak{M},N}}{y} \right)^{1-\frac{2}{N}} -1 \right) .
$$
As in the previous case, this inequality is equivalent to 
$$
\kappa\geq \left(s_{max}/s_0\right)^{\frac{1}{N}}.
$$
Condition \eqref{apos2} implies $q_0>0$, and the first assertion of Proposition \ref{c0prop1} then gives global-in-time existence of $\rho,p$. 

Collecting all cases and applying Proposition \ref{rhoptogether}, we conclude  the solutions to \eqref{fullodesys2} exist globally as long as each characteristic curve satisfies the conditions obtained above.  In particular, when all  characteristics satisfy these conditions, Lemma \ref{radialfield} and Theorem \ref{local} together yield global-in-time solutions to \eqref{mainsys} with $c=0$. 

Conversely, suppose that there exists a characteristic path corresponding to some parameter $\beta^\ast >0$ such that,
\begin{align*}
(\beta^\ast, u_0^\ast,\phi_{0r}^\ast,u_{0r}^\ast,\rho_0^\ast) & :=(\beta^\ast,u_0(\beta^\ast),\phi_{0r}(\beta^\ast),u_{0r}(\beta^\ast),\rho_0(\beta^\ast))\\
& \notin\Sigma_N\cup \{ (\beta,x,y,z,0): x>0, z\geq -ky/x \}.
\end{align*}
If $\rho_0^\ast = 0$,  Proposition \ref{proprho0} immediately gives finite-time blowup of $p$. 

If instead  $\rho_0^\ast>0$, then it is possible that 
$$
A_0^\ast\leq -k/(N-2),
$$
in which case finite-time breakdown follows directly from Propositions \ref{c0ftb} or \ref{c0prop4}. 

If $A_0^\ast> -k/(N-2)$, then the negation of one of the conditions \eqref{aneg}-\eqref{apos2} must hold, 
depending on the value of $A_0^\ast$. Once again, we can check each condition one by one. The analysis hen proceeds exactly as above, except that we invoke the finite-time breakdown conclusions of Propositions \ref{c0prop1}, \ref{c0prop2} and \ref{c0prop3} instead of the global existence assertions. Consequently, solutions to \eqref{mainsys} fails to remain smooth.

This completes the proof of the theorem. \qed

Theorem \ref{c0ctc2} can be proved in essentially the same manner as Theorem \ref{c0ctcn} except that instead of Propositions \ref{c0prop1}, \ref{c0prop2} and \ref{c0prop3}, we use Propositions \ref{c0n2prop1}, \ref{c0n2prop2} and \ref{c0n2prop3}.

\section{Final remarks}
\label{secconc}
The techniques developed in this paper also apply to one-dimensional setting. However, as emphasized in \cite{Tan21}, 
there is a fundamental difference between the 1D and multi-D cases: in multiple dimensions the Poisson forcing alone prevents concentration of the flow at the origin.  In fact, by Corollary \ref{qsalltime},  no matter how large (in absolute value) the initial velocity is, there are no concentrations at the origin for $N\geq 2$. 

To contrast this with the one-dimensional case, consider system
\eqref{qode}-\eqref{sode} with $N=1$, 
$$
q' = k\tilde s - kc - q^2,\qquad \tilde s' = -q\tilde s,
$$
where $\tilde s:= s+c$. Introducing the variables,  $a=q/\tilde s, b = 1/\tilde s$, we obtain the linear system,
$$
a' = k-kcb,\qquad b' = a.
$$
This system can be solved explicitly: 
$$
b(t) = \frac{1}{c} + \left( b(0)-\frac{1}{c} \right)\cos(\sqrt{kc}t) + \frac{a(0)}{\sqrt{kc}}\sin(\sqrt{kc}t).
$$
From this expression, one directly concludes that $b(t_\ast)=0$ (or equivalently, $\lim_{t\to t_\ast^-}\tilde s(t) = \infty$) for some $t_\ast>0$ whenever 
$$
a(0)^2\geq 2kb(0)-kc(b(0))^2,
$$
which in the original variables is equivalent to,
$$
|q|> \sqrt{k(2\tilde s(0)-c)}.
$$
Hence,  sufficiently large initial velocity (in absolute value) leads to concentration at the origin. When $c=0$, the condition becomes one sided:  a sufficiently large negative initial velocity (corresponding to flow pointing towards origin) triggers concentration. 

This behavior is in stark contrast with the case $N\geq 2$, wherein the Poisson forcing completely suppresses such concentrations. Moreover, recent works \cite{CS23,R23} show  that the four dimensional case is quite different from the others. In the present paper, we make an important further observation: for $N=4$, the subcritical region  contains 
initial data with arbitrarily large velocities.  In other words, no matter how large the initial velocity is (positive or negative), there always exists a region in the phase plane of the initial density and gradient of velocity guaranteeing global-in-time smooth solutions. 

Finally, we expect that the techniques introduced here can be extended to other models, such as the Euler–Poisson–alignment system and the Euler–Poisson equations with swirl. We leave these investigations for future work.

\section*{Acknowledgments}
This research was partially supported by the National Science Foundation under Grant DMS1812666.

\bigskip

\bibliographystyle{abbrv}

\end{document}